\title{Semi-group compactifications of Algebraic Groups}
\begin{document}

%\pagenumbering{roman} % Start roman numbering

\begin{abstract} 
We show that for algebraic groups over local fields of characteristic zero, the following are equivalent: Every homomorphism has a closed image, every unitary representation decomposes into a direct sum of finite-dimensional and mixing representations, and the matrix coefficients are dense within the algebra of weakly almost periodic functions over the group.
In our proof, we employ methods from semi-group theory. We establish that these groups are \emph{compactification-centric}, meaning $sG = Gs$ for any element $s$ in the weakly almost periodic compactification of the group $G$.
\end{abstract}
% Fourier–Stieltjes 

\maketitle
\section{Introduction}

This paper explores connections between different properties of topological groups, particularly focusing on algebraic groups over local fields of characteristic zero. We aim to highlight these connections, which were previously noted in the realm of connected Lie groups, and demonstrate that these hold true in the general context of algebraic groups over local fields.

One fundamental property we discuss is the following closed image property. A topological group is \emph{sealed} it image is closed under every continuous homomorphism. This property has been explored for algebraic groups over local fields in \cite{bader2023homomorphic}, relating closely to minimal group topologies. For an extensive survey on closed image properties and minimal group topologies, refer to \cite{MR3205486}, \cite{MR1651174}, \cite{dikranjan1998categorically}, and \cite{banakh2017categorically}.

The second key property is dynamical in nature. A unitary representation \(\pi,\mathcal{H})\), is mixing \emph{mixing} if all of its matrix coefficients belong to \(C_0(G)\). A topological group \(G\) is said to have the \emph{Howe--Moore property} if any unitary representation \((\pi,\mathcal{H})\) without invariant vectors is mixing.

The Howe-Moore Theorem asserts that simple Lie groups exhibit the Howe--Moore property, it is at the heart of seminal results such as Mostow’s and Margulis’s rigidity theorems and Ratner’s theorems on unipotent flows.

The interplay between the Howe--Moore property and closed image properties was articulated by Bader and Gelander, who connected these properties specifically for semi-simple groups. In \cite{bader2017equicontinuous}, Bader and Gelander's main theorem yielded significant corollaries: it confirmed that simple groups are sealed and also offered a unified proof of the Howe--Moore theorem simple groups over local fields. 

The third property we discuss is the \emph{Eberlin property}, pivotal in abstract harmonic analysis. 
The algebra generated by all matrix coefficients of a group \( G \) is known as the Fourier–Stieltjes algebra of \( G \). Its norm closure forms a \( C^*\)-algebra called the Eberlin algebra and is denoted by \( E(G) \).  A group \(G\) is called \emph{Eberlin} if \(E(G)\) aligns with \(WAP(G)\), the algebra of all weakly almost periodic functions.

%Eberlin proposed the equivalence of \(E(G)\) and \(WAP(G)\) for all locally compact groups.
In \cite{MR0102705}, Rudin provided the first counterexample, constructing a function \( f \) in \( WAP(\mathbb{Z}) \) but not in \( E(\mathbb{Z}) \). In \cite{chou1982weakly}, Chou extended this by showing that for any non-compact, locally compact, nilpotent group \( G \), it holds that \( E(G) \subsetneq WAP(G) \).

    The relationship between sealed and Eberlin properties extends beyond algebraic groups over local fields. In \cite{ben2016weakly}, it was demonstrated that automorphism groups of \( \aleph_0 \)-stable, \( \aleph_0 \)-categorical structures are sealed, moreover they exhibit Eberlin properties, as further explored in \cite{ben2018eberlein}.

Our primary contribution is a theorem that correlates these properties for algebraic groups over local fields in characteristic zero:
\begin{mthm}
    \label{mthm: main theorem}
    Let \( k \) be a local field of characteristic zero, and \( \mathbf{G} \) a connected \( k \)-algebraic group, denoted by $G=\bfG(k)$. The following properties are equivalent:
    \begin{enumerate}
        \item $G$ is sealed.
        \item Any unitary representation of $G$ decomposes into a direct sum of finite-dimensional and mixing components (modulo the kernel).
        \item $G$ is Eberlin.
    \end{enumerate}
\end{mthm}
We remark that by \cite[Theorem 8.1]{bader2023homomorphic} (1) is equivalent to the having the center of $G/\Rad{u}{G}$ is compact, and that $G$ acts with no non-trivial fixed point of the Lie algebra $\Lie(\Rad{u}{G})$.

Observing that the weak operator closure of a unitary representation forms a compact semi-group, our analysis delves into the theory of semi-group compactifications. A fundamental tool from this theory is Ruppert's theorem \cite[Ch III, 5.1]{ruppert2006compact}, which establishes that for a \emph{connected} group \(G\), within a semi-topological semi-group \(S\) where \(G\) is densely embedded, the equality \( sG = Gs \) holds for any element \( s \) in \(S\). This behavior inspires the definition of compactification-centric groups:
\begin{definition}
\label{def:compactification_centric}
    A topological group \(G\) is \emph{compactification-centric} if, in any semi-topological semi-group compactification \(\varphi\colon G \to S\)—a semi-group homomorphism with a dense image—every element \(s\) in \(S\) satisfies \(s\varphi(G) = \varphi(G)s\).
\end{definition}
The following Theorem~\ref{mthm:theorem_compactification_centric}, extends Ruppert's findings, beyond connected groups demonstrating that algebraic groups over local fields possess this compactification-centric property.
\begin{mthm}
\label{mthm:theorem_compactification_centric}
    Let \(k\) be a local field of characteristic zero, and \(\mathbf{G}\) a connected \(k\)-algebraic group. Then the group \(\mathbf{G}(k)\) is compactification-centric.
\end{mthm}
Another key notion is the Mautner phenomenon, we provide an abstract formulation in terms of semi-group compactifications (see Definition~\ref{def : Mautner}).
We show Theorem~\ref{thm - HM irr faith}, which states that for any compactification-centric group with the Mautner phenomenon, all irreducible faithful unitary representations are mixing.

\subsection{Acknowledgments}
The author is grateful for the guidance and insightful suggestions provided by Uri Bader. Special thanks are also extended to Omer Lavi, Guy Salomon, and Itamar Vigdorovich for their fruitful discussions and critical feedback.

\section{Preliminaries}
\label{section:prelim}

In this section, we recall the notion of a semi-topological semi-group along with some fundamental properties concerning topological semi-groups that contain a dense subgroup. The primary reference for this discussion is the book by Ruppert \cite{ruppert2006compact}.

\subsection{Compact Semi-topological Semi-groups}

A compact \emph{semi-topological semi-group} is defined as a compact (Hausdorff) space \( S \), endowed with an associative multiplication operation where the maps \( s \mapsto s_0s \) and \( s \mapsto ss_0 \) are continuous for any \( s_0 \in S \).

We denote the set of idempotents in \( S \) by \( J(S) \). In any compact semi-topological semi-group \( S \), the set \( J(S) \) is non-empty. For any \( j \in J(S) \), there exists a maximal subgroup denoted by \( H(j) \), which contains \( j \) and is composed of all subgroups containing \( j \) as the identity element. When elements in \( J(S) \) commute, a partial order on \( J(S) \) is defined by \( u \leq v \) if and only if \( v = uv \).

\subsection{Semi-topological Semi-groups Containing a Dense Subgroup}

Consider a compact semi-topological semi-group \( S \) containing a dense subgroup \( G \). A classical result is that \( S \) admits a unique minimal ideal, denoted by \( M(S) \), which is a compact topological group, this result is known as the Ryll--Nardzewiski theorem.

The Ellis-Lawson continuity theorem states that every separately continuous action of \( S \) on a compact space \( X \) is jointly continuous at points \( (s, x_0) \) where \( Sx_0 = X \) \cite[II.4.11]{ruppert2006compact}. We utilize this theorem extensively, especially noting that the semi-group multiplication map is continuous over \( G \times S \) and \( S \times G \).

Let $N\lhd G$ be a closed normal subgroup.
Let $M=M(\overline{N})$ be the minimal ideal of the closure of $N$ in $S$. 
Then the quotient space $S/M$ has a structure of a semi-topological semi-group, where $s_1\sim s_2$ if and only if $s_1M=s_2M$ \cite[Ch III, Proposition 1.10]{ruppert2006compact}.

\subsection{The WAP Compactification}

We provide an overview of weakly almost periodic functions. For a comprehensive introduction and proofs of related facts, see Chapter III, Section 2 in \cite{ruppert2006compact}.

Let \( G \) be a locally compact topological group, and \( C_b(G) \) denote the space of continuous bounded functions on \( G \). A function \( f \in C_b(G) \) is called \emph{weakly almost periodic} if the \( G \)-orbit is relatively compact with respect to the weak topology on \( C_b(G) \). The space of all weakly almost periodic functions forms an abelian \( C^*\)-algebra, known as the weakly almost periodic algebra, and denoted by \( WAP(G) \).

\begin{definition}[The WAP Compactification]
    Let \( G \) be a topological group. The maximal ideal space of \( WAP(G) \), denoted by \( w(G) \), is called the \emph{weakly almost periodic (WAP) compactification} of the group \(G\).
\end{definition}

This compactification is crucial as it extends the group multiplication of \( G \) to the maximal ideal space \( w(G) \), making \( w(G) \) a semi-topological semi-group, also known as the \emph{WAP compactification}. In cases where \( G \) is locally compact, \( G \) embeds into \(w(G)\) and can be considered as a dense subgroup \cite[Ch III, 4.1]{ruppert2006compact}.

\begin{prop}[Universal Property of the WAP Compactification]
    \label{prop:universal_property_WAP}
    Let \( G \) be a locally compact group, and \( \varphi \colon G \to S \) a continuous semi-group morphism into a compact semi-topological semi-group. Then \( \varphi \) factors through the WAP compactification \( w(G) \). That is, there exists a continuous semi-group homomorphism \( \tilde{\varphi} \colon w(G) \to S \) making the following diagram commute:
    \[
    \begin{tikzcd}
        G \arrow[d, "\omega"] \arrow[r, "\varphi"] & S \\
        w(G) \arrow[ru, swap, "\tilde{\varphi}"] &
    \end{tikzcd}
    \]
    Where \(\omega\) denotes the canonical mapping of \(G\) into the maximal ideal space \(w(G)\).
\end{prop}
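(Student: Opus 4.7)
The plan is to invoke Gelfand duality. Since $w(G)$ is the maximal ideal space of $WAP(G)$, specifying a continuous map $\tilde{\varphi}\colon w(G) \to S$ is equivalent to specifying a unital $*$-homomorphism $C(S) \to WAP(G)$. The pullback $\varphi^*\colon C(S) \to C_b(G)$, $f \mapsto f \circ \varphi$, is already a unital $*$-homomorphism, so the entire content of the proposition reduces to the assertion $\varphi^*(C(S)) \subset WAP(G)$; the resulting $\tilde{\varphi}$ then satisfies $\tilde{\varphi} \circ \omega = \varphi$ by construction of the Gelfand transform, and uniqueness follows from density of $\omega(G)$ in $w(G)$.

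To verify $f \circ \varphi \in WAP(G)$ for each $f \in C(S)$, I note that $(R_g(f \circ \varphi))(x) = f(\varphi(x)\varphi(g)) = \varphi^*(R_{\varphi(g)} f)(x)$, where $R_s f(t) := f(ts)$. Thus the right $G$-orbit of $f \circ \varphi$ in $C_b(G)$ is the $\varphi^*$-image of a subset of $\{R_s f : s \in S\} \subset C(S)$. Since bounded linear operators are weakly continuous, it suffices to show that $\{R_s f : s \in S\}$ is weakly relatively compact in $C(S)$.

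For this I apply Grothendieck's double-limit criterion: a bounded subset $F \subset C(K)$ of the continuous functions on a compact Hausdorff space is weakly relatively compact iff for every pair of sequences $(\psi_n) \subset F$ and $(x_m) \subset K$ for which both iterated limits exist, they are equal. Take $\psi_n = R_{s_n}f$ and $x_m = t_m$, and by compactness of $S$ pass to convergent subsequences $s_n \to s$ and $t_m \to t$. The semi-topological hypothesis then yields that both iterated limits equal $f(ts)$: for $\lim_n \lim_m f(t_m s_n)$, continuity of right multiplication by $s_n$ gives inner limit $f(ts_n)$, after which continuity of left multiplication by $t$ gives outer limit $f(ts)$; symmetrically for the other order, using left multiplication by $t_m$ and then right multiplication by $s$. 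This double-limit verification is the main technical obstacle, as it is the only place where the separate continuity of multiplication on $S$ enters.

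Finally, to upgrade $\tilde{\varphi}$ from a continuous map to a semi-group homomorphism, I compare the two maps $(x, y) \mapsto \tilde{\varphi}(xy)$ and $(x, y) \mapsto \tilde{\varphi}(x)\tilde{\varphi}(y)$ from $w(G) \times w(G)$ to $S$. Both are separately continuous using the semi-topological structure of $w(G)$ and $S$ together with continuity of $\tilde{\varphi}$, and they agree on the dense subset $\omega(G) \times \omega(G)$ because $\varphi$ is a semi-group morphism. Separate continuity combined with agreement on a product of dense subsets then forces agreement on all of $w(G) \times w(G)$ by a two-step density argument in each variable.
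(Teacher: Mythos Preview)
The paper does not give its own proof of this proposition: it appears in the preliminaries as a standard background fact, with proofs deferred to Ruppert's monograph (Chapter~III, Section~2). Your argument via Gelfand duality---reducing everything to the inclusion $\varphi^*(C(S))\subset WAP(G)$ and then verifying that inclusion with Grothendieck's double-limit criterion---is the standard route and is correct in substance.

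One small technical point deserves tightening. In verifying the double-limit criterion you write ``by compactness of $S$ pass to convergent subsequences $s_n\to s$ and $t_m\to t$''. A compact Hausdorff space need not be sequentially compact, so such subsequences may fail to exist. The fix is painless: take cluster points $s$ of $(s_n)$ and $t$ of $(t_m)$, which do exist by compactness. Since the full limit $\lim_m f(t_m s_n)$ is assumed to exist, it coincides with the limit along any subnet; continuity of right multiplication by $s_n$ then forces $\lim_m f(t_m s_n)=f(t s_n)$, and the remainder of your computation goes through verbatim with cluster points in place of subsequential limits. Alternatively, one can bypass the double-limit check altogether by noting that $s\mapsto R_s f$ is continuous from $S$ into $C(S)$ with the pointwise topology (this is exactly separate continuity), so $\{R_s f:s\in S\}$ is a norm-bounded pointwise-compact subset of $C(S)$ and hence weakly compact by the other form of Grothendieck's theorem.
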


Let $N\lhd G$ be a closed normal subgroup, and let $M=M(\overline{N})$ be the minimal ideal of the closure of $N$ in $w(G)$. 
Then $\w(G)/M\cong \w(G/N)$ \cite[Ch III, Proposition 1.10]{ruppert2006compact}.

%\begin{remark}
%    The minimal ideal of \( w(G) \) is called the \emph{Bohr compactification} of \( G \), which is the maximal ideal space of the \( C^* \)-algebra of all almost periodic functions, \( AP(G) \). The 
%\end{remark}

\subsection{The Compactification Associated with a Unitary Representation}
Let \( G \) be a locally compact group, and \( (\pi, \mathcal{H}) \) a unitary representation. The weak operator closure of the representation, denoted \( \overline{\pi(G)}^{wot} \), forms a compact semi-topological semi-group, achieved as the maximal ideal space of the \( C^* \)-algebra generated by matrix coefficients of \( \pi \). This semi-group, \( S_\pi \coloneqq \overline{\pi(G)}^{wot} \), it is referred to as the \emph{enveloping semi-group} associated with \( \pi \), \cite[VI.2.12]{berglund1989analysis}.

\begin{example}
    The one-point compactification, often called the "Alexander compactification," of a locally compact group can be regarded as the enveloping semi-group associated with the regular representation.
\end{example}
Understanding the properties of enveloping semi-groups associated with unitary representations is a valuable tool in studying the properties of the group \( G \). 
This is a primary focus in \cite{spronk2013matrix} where the authors investigate the properties of compactifications associated with unitary representations.

\section{compactification-centric groups}\label{section:compactification_centric}
The following is an extremely useful theorem by Ruppert \cite[III.5.1]{ruppert2006compact}.

\begin{theorem}
    Let \( G \) be a connected locally compact group and \( \varphi\colon G \to S \), a semi-topological semi-group compactification. Then, \( \varphi(G) \) is centric in \( S \), i.e., for any \( s \in S \), \( s\varphi(G) = \varphi(G)s \).
\end{theorem}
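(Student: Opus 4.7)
Fix $s \in S$; by symmetry it suffices to prove $sG \subseteq Gs$. My plan is to study the set
\[
E_s \;:=\; \{g \in G : sg \in Gs\}
\]
and show $E_s = G$ by proving $E_s$ is a subgroup of $G$ and contains a neighborhood of the identity, at which point connectedness of $G$ forces $E_s = G$. The symmetric argument applied to $\{g \in G : gs \in sG\}$ then yields $Gs \subseteq sG$ as well.

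\textbf{Subgroup structure.} Suppose $g, h \in E_s$, so $sg = g_1 s$ and $sh = h_1 s$ for some $g_1, h_1 \in G$. Then
\[
sgh \;=\; (sg)h \;=\; g_1(sh) \;=\; g_1 h_1 s,
\]
so $gh \in E_s$. For inverses, from $sg = g_1 s$ one left-multiplies by $g_1^{-1}$ and right-multiplies by $g^{-1}$ to get $g_1^{-1} s = s g^{-1}$, hence $g^{-1} \in E_s$. Thus $E_s$ is a subgroup of $G$.

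\textbf{The neighborhood step and the main obstacle.} The remaining and hardest step is producing an identity neighborhood $U \subseteq E_s$. The natural idea is to exploit the Ellis--Lawson theorem, which gives joint continuity of multiplication on $G \times S$ and $S \times G$, together with the structural result that the minimal ideal $M(S)$ is a compact topological group (Ryll--Nardzewski). More concretely, I would analyze the closed $G$-invariant set $\overline{Gs}$: joint continuity makes $G \times \overline{Gs} \to \overline{Gs}$ continuous, so $\overline{Gs}$ is a compact $G$-space on the left. I would then try to show that $sG \subseteq \overline{Gs}$ by continuity in $g$, and upgrade ``$sg \in \overline{Gs}$'' to ``$sg \in Gs$'' for $g$ in a small neighborhood of $1_G$ by invoking the Rees--Suschkewitsch-type decomposition on the idempotents $J(\overline{Gs})$ and the fact that the maximal subgroups $H(j)$ are genuine topological groups admitting inversion.

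\textbf{Main obstacle.} The genuine difficulty is that $S$ is only semi-topological and $Gs$ need not be closed, so one cannot transport the topological-group trick ``a subgroup containing a neighborhood of $1$ equals $G$ when $G$ is connected'' naively: the map $g \mapsto sg$ is continuous but neither open nor injective in general, so a neighborhood of $s$ inside $\overline{sG}$ need not pull back to a neighborhood of $1_G$. Handling this requires descending along the idempotent order to a subsemigroup $eSe$ (with $e$ an idempotent suitably chosen from $\overline{\langle s \rangle}$) where one can replace $s$ by an element of the maximal group $H(e)$, in which inversion is available and ordinary group-theoretic arguments take over. I expect the entire weight of the theorem — and its failure for disconnected $G$ — to be concentrated in this reduction.
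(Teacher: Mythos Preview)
The paper does not prove this theorem; it is quoted as Ruppert's result with a bare citation to \cite[III.5.1]{ruppert2006compact} and used only as motivation for the definition of compactification-centric groups. There is therefore no in-paper argument to compare your proposal against.

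Judged on its own, your subgroup reduction is correct: $E_s=\{g\in G: sg\in Gs\}$ is indeed a subgroup, and if it were open then connectedness would finish the proof. But your write-up is, by your own admission, incomplete at the only nontrivial step. You sketch a plan (pass to $\overline{Gs}$, descend along idempotents to $eSe$, exploit inversion in $H(e)$) but do not carry it out, and the obstacles you list---$Gs$ not closed, $g\mapsto sg$ neither open nor injective---are exactly the ones that make a naive neighborhood argument fail. As it stands this is an outline with the core argument missing, not a proof.

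For orientation, Ruppert's proof does not try to exhibit a neighborhood of $1_G$ inside $E_s$ directly. It passes through the structure theory of connected locally compact groups (reduction modulo a compact normal subgroup to a connected Lie group) and then handles one-parameter subgroups separately via limiting arguments using joint continuity on $G\times S$; the conclusion is that $E_s$ contains all one-parameter subgroups and all compact subgroups, which together generate $G$. Your idempotent-descent sketch is not obviously wrong, but it would require substantial additional work to make precise and is not the established route.
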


This phenomenon for connected groups motivates the introduction of \emph{compactification-centric} groups, as detailed in Definition~\ref{def:compactification_centric}.
\begin{remark}
    A subset \( A \) of a semi-group \( S \) is said to be \emph{centric} if, for any \( s \in S \), there is an equality \( sA = As \). An element is called \emph{central} if the singleton set \(\{a\}\) containing it is centric.
\end{remark}

 By the universal property, to confirm that a subset is centric in any semi-topological semi-group compactification, it is sufficient to verify the centric condition only for the WAP (Weakly Almost Periodic) compactification.

We abuse notation and regard an element of \( G \) as an element of \( S \) identified via the compactification map.
Note that locally compact groups embed into their weakly almost periodic compactification \cite[Ch III, Theorem 3.3]{ruppert2006compact}.

\begin{prop}
\label{prop:cor_of_compactification_centric}
    Let \( G \) be a compactification-centric group, and \( S \) a semi-topological semi-group compactification. Then:
    \begin{enumerate}
        \item Every right (left) Ideal in \( S \) is also a left (right) ideal.
        \item For any \( s \in S \), the subgroup \( F_R(s) = \{ g \in G : s\varphi(g) = s \} \) is normal.
        \item Idempotent elements \( s \in J(S) \) are central.
        \item For any idempotent \( s \in J(S) \), the sub-semi-group \( \varphi(G) s \) is a group.
    \end{enumerate}
\end{prop}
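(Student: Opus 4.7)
The unifying strategy is to exploit the centric identity $s\varphi(G) = \varphi(G)s$ together with the separate continuity of the multiplication and the density of $\varphi(G)$ in $S$. A useful preliminary observation is that $\varphi(e)$ must be the identity of $S$: the map $t \mapsto t\varphi(e)$ is continuous and equals the identity on $\varphi(G)$, hence on all of $S$; similarly on the other side.

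For (1), given a closed right ideal $I$, $s \in S$, and $i \in I$, I would pick a net $(g_\alpha)$ in $G$ with $\varphi(g_\alpha) \to s$; separate continuity gives $si = \lim_\alpha \varphi(g_\alpha) i$. The centric property rewrites $\varphi(g_\alpha) i = i\varphi(h_\alpha)$ for suitable $h_\alpha \in G$, and each such product lies in $iS \subseteq I$; closedness of $I$ then puts $si$ in $I$. The argument for left ideals is symmetric. For (2), the subgroup property of $F_R(s)$ is immediate from $s\varphi(gh) = s\varphi(g)\varphi(h) = s\varphi(h) = s$ and $s\varphi(g^{-1}) = s\varphi(g)\varphi(g^{-1}) = s\varphi(e) = s$. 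For normality, given $g \in F_R(s)$ and $h \in G$, pick $h' \in G$ with $s\varphi(h) = \varphi(h') s$; then
\[ s\varphi(hgh^{-1}) = \varphi(h') s\varphi(g) \varphi(h^{-1}) = \varphi(h') s \varphi(h^{-1}) = s\varphi(h)\varphi(h^{-1}) = s. \]

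For (3), the key trick is to evaluate the triple product $s\varphi(g) s$ in two different ways. Writing $s\varphi(g) = \varphi(h) s$ and using $s^2 = s$ gives $s\varphi(g) s = \varphi(h) s^2 = \varphi(h) s = s\varphi(g)$. Dually, writing $\varphi(g) s = s\varphi(k)$ gives $s\varphi(g) s = s^2 \varphi(k) = s\varphi(k) = \varphi(g) s$. Hence $s\varphi(g) = \varphi(g) s$ for every $g \in G$; since the set of $t \in S$ commuting with $s$ is closed (by separate continuity) and contains the dense subgroup $\varphi(G)$, it must equal $S$. Then (4) follows: by (3), $\varphi(g) s \cdot \varphi(h) s = \varphi(g) \varphi(h) s^2 = \varphi(gh) s$, so $\varphi(G) s$ is closed under multiplication; the element $s = \varphi(e) s$ is its two-sided identity; and $\varphi(g^{-1}) s$ inverts $\varphi(g) s$ since their product is $\varphi(g g^{-1}) s = s$.

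The main conceptual hurdle is item (3), where one must spot that the two opposing forms of the centric identity, combined with idempotency $s^2 = s$, collapse the triple product $s\varphi(g) s$ onto both $s\varphi(g)$ and $\varphi(g) s$ simultaneously. With (3) in hand the remaining parts follow by essentially formal manipulation with the centric property, the density of $\varphi(G)$, and the separate continuity of the multiplication.
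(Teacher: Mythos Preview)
Your argument is essentially the same as the paper's, and items (2)--(4) are fine; the sandwich trick $s\varphi(g)s$ for (3) is exactly what the paper does (it just applies it directly to arbitrary $t\in S$, using $sS=Ss$ from (1), rather than to $g\in G$ first and then passing to the closure).

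There is one small gap in your treatment of (1): you assume the ideal $I$ is \emph{closed}, which the statement does not. Your limiting argument only places $si$ in $\overline{I}$. The fix is to do what the paper does: first prove once and for all that $tS=St$ for every $t\in S$ (since $S$ is compact and multiplication is separately continuous, $tS=\overline{t\varphi(G)}=\overline{\varphi(G)t}=St$). Then for any right ideal $I$, $i\in I$, and $s\in S$ one has $si\in Si=iS\subset I$ directly, with no closedness needed.
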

\begin{proof}
\begin{enumerate}
    \item By density of $G$, $sS=Ss$ for any $s\in S$. So if $A$ is a right (left) ideal, we get that for any $a\in A$ and $b\in S$, $ab\in Sa\subset A$ ($ba \in aS\subset A$).
    \item Let $s_0\in J(S)$, and $s\in S$. Since $s_0s=s's_0$ and $ss_0=s_0s''$ for some $s',s''\in S$,
    we get that:
    $$ss_0=s_0s''=s_0ss_0=s's_0=s_0s$$
    \item Let $g\in G$ and $n\in F_R(s)$. Since $sg=g's$ for some $g'\in G$ we get: $$sgng^{-1}=g'sng^{-1}=g'sg^{-1}=s$$.
    \item The idempotent $s$ serves as the identity in $\varphi(G)s$, by associativity of the semi-group $S$, it is left to show the existence of an inverse. Indeed for $sg\in S$, by centrality of idempotent, its inverse is simply $sg^{-1}$.
\end{enumerate}
\end{proof}

The compactification-centric property is inherited by quotients.
\begin{lemma}
    \label{lem : qutient of compactification-centric is compactification-centric}
    If $G$ is a compactification-centric group and $N \lhd G$ is a closed normal subgroup, then the quotient group $G/N$ is also compactification-centric.
\end{lemma}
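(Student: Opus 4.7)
The plan is to argue that any semi-topological semi-group compactification of $G/N$ can be pulled back to a semi-topological semi-group compactification of $G$, so the centric property transfers automatically.

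More precisely, let $\psi \colon G/N \to T$ be any semi-topological semi-group compactification of the quotient, and let $q \colon G \to G/N$ denote the canonical quotient map. The composition $\varphi \coloneqq \psi \circ q \colon G \to T$ is a continuous semi-group homomorphism, and since $q$ is surjective we have $\varphi(G) = \psi(G/N)$, which is dense in $T$. Therefore $\varphi$ is itself a semi-topological semi-group compactification of $G$.

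By the hypothesis that $G$ is compactification-centric, applied to $\varphi$, we have $t\varphi(G) = \varphi(G)t$ for every $t \in T$. Substituting $\varphi(G) = \psi(G/N)$ yields $t\psi(G/N) = \psi(G/N)t$ for every $t \in T$. Since $\psi$ was an arbitrary semi-topological semi-group compactification of $G/N$, this shows that $G/N$ is compactification-centric.

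There is no real obstacle here: the proof is essentially a one-line pullback argument. The only thing one might want to remark on is the verification that $\varphi$ meets the conditions of Definition~\ref{def:compactification_centric}, namely continuity (immediate from continuity of $\psi$ and $q$), the semi-group homomorphism property (immediate from that of $\psi$ and $q$), and density of the image (which uses surjectivity of $q$ together with density of $\psi(G/N)$ in $T$).
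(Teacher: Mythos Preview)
Your proof is correct, and it is genuinely different from (and more elementary than) the argument in the paper.

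The paper's proof works exclusively with the WAP compactification: it invokes the identification $w(G/N)\cong w(G)/M$, where $M$ is the minimal ideal of $\overline{N}$ in $w(G)$, and then pushes the relation $sG=Gs$ in $w(G)$ down to the quotient semi-group $w(G)/M$. This relies on the universal property (so that checking centricity in $w(G/N)$ suffices) and on the cited structural result about how $w(\cdot)$ behaves under group quotients.

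Your argument bypasses all of that by observing that an arbitrary compactification $\psi$ of $G/N$ becomes, after precomposing with the quotient map $q$, a compactification of $G$ with the same image; centricity then transfers verbatim. This is a cleaner and more general route---it uses nothing beyond Definition~\ref{def:compactification_centric} and surjectivity of $q$. The paper's approach has the side benefit of making the quotient structure $w(G)/M$ explicit, which is reused later (for example in Lemma~\ref{if fixed point normal then induce}), but for the lemma at hand your pullback argument is the more economical one.
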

\begin{proof}
Let \( M \le \overline{N} \) be the minimal ideal of \( \overline{N} \). 
Since \( G \) is compactification-centric, for any \( s \in S \) we find that \( sG = Gs \). This implies that \( sG \sim Gs \) in \( w(G)/M \).
Recall that \( w(G/N) \cong w(G)/M \), thus \( G/N \) is compactification-centric as well.
\end{proof}

Having a compactification-centric quotient we can show that the group $G$ centralizes elements that do not act on the normal subgroup.
\begin{lemma}
    \label{if fixed point normal then induce}
    Let $G$ be a locally compact group $s\in w(G)$ and $N\le G$ a normal subgroup. If $G/N$ is compactification-centric and $sN=s$, then $sG=Gs$.
\end{lemma}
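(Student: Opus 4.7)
The plan is to pass to the quotient $S/M \cong w(G/N)$, where $S = w(G)$ and $M = M(\overline{N})$ is the minimal ideal of the closure of $N$ in $S$, apply the compactification-centricity of $G/N$ there, and then upgrade the resulting modulo-$M$ equality back to $S$ using both the hypothesis $sN = \{s\}$ and the normality of $N$ in $G$.

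First, I will observe that $sN = \{s\}$ extends, by continuity of left multiplication by $s$ on the semi-topological semi-group $S$ together with Hausdorffness of $S$, to $s\overline{N} = \{s\}$; in particular $sM = \{s\}$. This is the key equation allowing translation between $S$ and $S/M$.

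Next, applying compactification-centricity of $G/N$ through the identification $S/M \cong w(G/N)$ noted in the preliminaries, for every $g \in G$ there is some $g' \in G$ with $[sg] = [g's]$ in $S/M$, i.e.\ $sgM = g'sM$. Combined with $sM = \{s\}$, the right-hand side is the singleton $\{g's\}$, so $sgM = \{g's\}$; in particular $sg\cdot e_M = g's$, where $e_M$ denotes the identity of the compact group $M$.

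The main obstacle is the final step: upgrading $sg\cdot e_M = g's$ to $sg = g's$, since $e_M$ need not coincide with the identity of $S$. Here is where normality enters. Conjugation $x \mapsto gxg^{-1}$ is a homeomorphism of $S$ (a composition of continuous left- and right-multiplications by elements of $G$), and since $gNg^{-1} = N$ it preserves $\overline{N}$; in particular $ge_Mg^{-1} \in \overline{N}$, whence $s(ge_Mg^{-1}) = s$. Right-multiplying by $g$ and using associativity yields $sge_M = sg$, which combined with $sg \cdot e_M = g's$ gives $sg = g's \in Gs$, proving $sG \subseteq Gs$. The reverse inclusion $Gs \subseteq sG$ follows from the symmetric centricity statement $[gs] = [sg'']$ by the analogous argument.
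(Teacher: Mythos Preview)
Your argument is correct and follows essentially the same route as the paper: pass to the quotient $w(G)/M \cong w(G/N)$, use compactification-centricity of $G/N$ there, and lift back using $s\overline{N}=\{s\}$ together with normality of $N$. Your version is somewhat more explicit about the lifting step (isolating $sge_M = sg$ via $ge_Mg^{-1}\in\overline{N}$), whereas the paper compresses this into the single chain $sG = sMG = GsM = Gs$, but the content is the same.
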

\begin{proof}
    Let $M\le \overline{N}$ be the minimal ideal of $\overline{N}$. 
    By continuity of the multiplication by $s$ we get that $s\overline{N}=s$.
    Since $G/N$ is compactification-centric 
    we get that $sG\sim Gs$ in $\w(G)/M$.
    So $sG=sMG=GsM=Gs$ as needed. 
\end{proof}
\begin{remark}
    By taking the left semi-group quotient, we get the same result assuming that $Ns=s$.
\end{remark}

\begin{definition}
    Let $G$ be a topological group and $\varphi\colon G \to S$ a semi-topological semi-group compactification. The right (left) $s$-fixed point are denoted by $F_R(s)=\set{g\in G \ : \  s\varphi(g)=s}
    $ ($F_L(s)=\set{g\in G \ : \  \varphi(g)s=s}
    $).
\end{definition}

\begin{definition}
 \label{def : Mautner}
    A topological group $G$ is said to have the \emph{Mautner phenomenon} if for any element  $s\in w (G)\setminus G$ in the WAP compactification, the subgroup $F_R(s)$ is not trivial.
\end{definition}
This notion already lets us prove a vanishing result for matrix coefficients of compactification-centric groups satisfying the Mautner phenomenon.
\begin{theorem}
    \label{thm - HM irr faith}
    Let $G$ be a compactification-centric group that exhibits the Mautner phenomenon. Then any irreducible faithful unitary representation is mixing, i.e. the matrix coefficients are in $C_0(G)$.
\end{theorem}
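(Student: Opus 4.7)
The plan is to pass to the enveloping semi-group $S_\pi=\overline{\pi(G)}^{wot}$, view it as a semi-topological semi-group compactification of $G$, and use the universal property of the WAP compactification to obtain a continuous semi-group morphism $\tilde{\pi}\colon w(G)\to S_\pi$. The representation $\pi$ is mixing if and only if $\tilde{\pi}(s)=0$ for every $s\in w(G)\setminus\omega(G)$: any net $g_\alpha\to\infty$ in $G$ has cluster points only in $w(G)\setminus\omega(G)$ (since $\omega$ is an embedding), and conversely every boundary point is a limit of such a net. Thus the entire theorem reduces to showing this vanishing.

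Fix $s\in w(G)\setminus\omega(G)$. The Mautner phenomenon furnishes a nontrivial subgroup $N:=F_R(s)\le G$, and Proposition~\ref{prop:cor_of_compactification_centric}(2), which uses the compactification-centric hypothesis, tells us $N$ is normal in $G$. Applying $\tilde{\pi}$ to the identity $s\omega(n)=s$ for $n\in N$ gives $\tilde{\pi}(s)\pi(n)=\tilde{\pi}(s)$, hence
\[
\ker\tilde{\pi}(s)\supseteq\overline{\operatorname{span}}\{\pi(n)v-v:n\in N,\ v\in\mathcal{H}\}=(\mathcal{H}^N)^{\perp},
\]
where $\mathcal{H}^N$ denotes the $N$-fixed vectors. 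Because $N$ is normal, $\mathcal{H}^N$ is $G$-invariant, so irreducibility of $\pi$ forces $\mathcal{H}^N\in\{0,\mathcal{H}\}$. The possibility $\mathcal{H}^N=\mathcal{H}$ would mean $N\subseteq\ker\pi$, which combined with faithfulness would contradict the nontriviality of $N$ provided by Mautner. Hence $\mathcal{H}^N=0$, $\ker\tilde{\pi}(s)=\mathcal{H}$, and $\tilde{\pi}(s)=0$ as required.

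The main delicate point is translating the purely semi-group theoretic data—a nontrivial right stabilizer of a boundary element $s$—into a genuinely $G$-invariant subspace of $\mathcal{H}$ on which one can apply the Schur-type dichotomy. The compactification-centric hypothesis is exactly what upgrades $F_R(s)$ from an arbitrary subgroup to a normal one, so that $\mathcal{H}^N$ becomes $G$-stable and irreducibility is available; faithfulness then prevents the nontrivial stabilizer from silently sitting inside $\ker\pi$ and forces the dichotomy to resolve in the favorable direction.
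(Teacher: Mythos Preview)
Your argument is correct and follows essentially the same route as the paper: pass to the enveloping semi-group $S_\pi$, use Mautner to find a nontrivial $g$ with $\tilde\pi(s)\pi(g)=\tilde\pi(s)$ for each boundary point $s$, and then invoke the compactification-centric hypothesis together with irreducibility and faithfulness to force $\tilde\pi(s)=0$. The only cosmetic difference is in how the centric hypothesis is deployed: the paper uses $\tilde\pi(s)\pi(G)=\pi(G)\tilde\pi(s)$ directly to show that $\ker\tilde\pi(s)$ itself is $G$-invariant (hence $\{0\}$ or $\mathcal H$), whereas you use normality of $F_R(s)$ to make $\mathcal H^{F_R(s)}$ a $G$-invariant subspace and then argue via its orthocomplement---both packagings of the same idea.
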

\begin{proof}
    Let $\pi\colon G \to \mathrm{U}(\mathcal{H})$ be such a representation. We will show that the semi-topological semi-group $S_{\pi}=\overline{\pi(G)}^{wot}$ is equal to $\pi(G)\cup \set{\emph{0}}$.
    First, we claim that if $s\in S$ is non-injective, it must be $\emph{0}$. 
    Indeed, assume $s(\zeta)=0$ for some non zero $\zeta\in \mathcal{H}$, then since $G$ is compactification-centric, $\pi(G)s(\zeta)=s(\pi(G)\zeta)=0$ and so by irreducibility $s=\emph{0}$.
    It is left to show that any $s\in S\setminus \pi(G)$ is non-injective. Indeed, by the Mautner phenomenon, there exists a nontrivial $g\in G$ such that $s\pi(g)=s$, and since $\pi$ is faithful this implies that $s$ is non-injective.
\end{proof}
\begin{cor}
    Simple groups that are compactification-centric topological and satisfy the Mautner phenomenon have the Howe--Moore property.
\end{cor}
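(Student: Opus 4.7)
Plan: Let $(\pi,\mathcal H)$ be a unitary representation of $G$ with no $G$-invariant vectors; the goal is to show that every matrix coefficient $c_{\xi,\eta}(g)=\langle\pi(g)\xi,\eta\rangle$ lies in $C_0(G)$. Since $\ker\pi$ is a closed normal subgroup of the simple group $G$, it is either $\{e\}$ or $G$; the latter would make $\pi$ trivial and force $\mathcal H^G=\mathcal H\ne 0$, contradicting the hypothesis, so $\pi$ is faithful. I will not attempt to decompose $\pi$ into irreducibles in order to apply Theorem~\ref{thm - HM irr faith} directly; instead I will argue at the level of the WAP compactification, which avoids issues with the faithfulness of direct-integral components.

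Write $\omega\colon G\to w(G)$ for the canonical map and let $\tilde\pi\colon w(G)\to S_\pi$ be the semi-group morphism furnished by Proposition~\ref{prop:universal_property_WAP}. Because $c_{\xi,\eta}\in WAP(G)$, it extends continuously to $\hat c_{\xi,\eta}\colon w(G)\to\mathbb C$ via $\hat c_{\xi,\eta}(t)=\langle\tilde\pi(t)\xi,\eta\rangle$. I will show that $\tilde\pi(t)=0$ for every $t\in w(G)\setminus G$, so that $\hat c_{\xi,\eta}$ vanishes on the corona $w(G)\setminus G$.

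Fix $t\in w(G)\setminus G$. By the Mautner phenomenon, the right-fixed subgroup $F_R(t)=\{g\in G:t\omega(g)=t\}$ is nontrivial, and by Proposition~\ref{prop:cor_of_compactification_centric}(2) applied to $w(G)$ it is normal in $G$. Simplicity of $G$ forces $F_R(t)=G$, so $t\omega(g)=t$ for every $g\in G$; applying the semi-group morphism $\tilde\pi$ yields $\tilde\pi(t)\pi(g)=\tilde\pi(t)$ for all $g$. Consequently the bounded operator $\tilde\pi(t)$ annihilates every coboundary vector $\pi(g)\xi-\xi$. The closed span of such coboundaries equals $(\mathcal H^G)^\perp=\mathcal H$ since $\mathcal H^G=0$, so $\tilde\pi(t)=0$ and $\hat c_{\xi,\eta}(t)=0$.

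To finish, for each $\varepsilon>0$ the set $\{t\in w(G):|\hat c_{\xi,\eta}(t)|\ge\varepsilon\}$ is closed in the compact space $w(G)$, hence compact, and disjoint from $w(G)\setminus G$ by the preceding step; so it is contained in $G$, and since $G$ embeds as a topological subspace of $w(G)$ it is compact in $G$ as well. Thus $c_{\xi,\eta}\in C_0(G)$ and $\pi$ is mixing. The only real subtlety I anticipate is the bookkeeping across the three semi-groups $G\subset w(G)\to S_\pi$ and the final passage from vanishing on the corona to membership in $C_0(G)$, which rests on the standard fact that $G$ sits topologically inside $w(G)$, recalled in the preliminaries.
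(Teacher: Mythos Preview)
Your proof is correct and takes a genuinely different route from the paper. The paper disintegrates $\pi$ as a direct integral of irreducibles, uses simplicity to declare each (nontrivial) irreducible factor faithful, and then applies Theorem~\ref{thm - HM irr faith} factorwise. You instead stay inside $w(G)$: the Mautner phenomenon together with Proposition~\ref{prop:cor_of_compactification_centric}(2) and simplicity force $F_R(t)=G$ for every $t\in w(G)\setminus G$, and the standard ergodic decomposition $\mathcal H=\mathcal H^G\oplus\overline{\mathrm{span}}\{\pi(g)\xi-\xi\}$ then kills $\tilde\pi(t)$ outright. Your approach buys you freedom from the separability hypotheses underlying direct-integral theory and from the (unstated in the paper) verification that a direct integral of $C_0$ matrix coefficients is again $C_0$; it is in effect a reproof of Theorem~\ref{thm - HM irr faith} with the irreducibility hypothesis traded for the assumption $\mathcal H^G=0$. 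One small remark: the faithfulness of $\pi$ that you establish at the outset is never used in the remainder of your argument and can be dropped.
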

\begin{proof}
    Unitary representations decomposed into a direct integral of irreducible representations \cite{dixmier1969algebres} [Ch II, Corollary 3.1], simplicity ensures that these irreducible representations are faithful. Thus by Theorem~\ref{thm - HM irr faith} the matrix coefficients vanish at infinity for each component and are therefore in $C_0(G)$.
\end{proof}
\begin{remark}
    It will be demonstrated in the next section that simple algebraic groups over local fields of characteristic zero are compactification-centric and exhibit the Mautner phenomenon, providing a unified proof of the Howe–Moore theorem.
\end{remark}

\section{Algebraic Groups are compactification-centric}
\label{section:alg_are_compactification_centric}
This section is dedicated to demonstrating that $k$-algebraic groups, where $k$ is a local field of characteristic zero, are compactification-centric. 
This theorem extends Ruppert's results for connected groups \cite[Ch III. 5.1]{ruppert2006compact} to a broader class of groups.

\begin{theorem}
    \label{thm:algebraic_groups_are_compactification_centric}
    Let \( k \) be a local field of characteristic zero, and \( \mathbf{G} \) a connected affine \( k \)-algebraic group. Then the topological group \( \mathbf{G}(k) \) is compactification-centric.
\end{theorem}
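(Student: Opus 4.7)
The plan is to reduce to simpler algebraic building blocks using the Levi decomposition $\mathbf{G} = \mathbf{L} \ltimes \mathbf{R}_u(\mathbf{G})$ and to induct via the two lemmas already established. The central tool is Lemma~\ref{if fixed point normal then induce}: to verify $sG = Gs$ for a given $s \in w(G)$, it suffices to exhibit a closed normal subgroup $N \lhd G$ such that $G/N$ is compactification-centric and either $sN = s$ or $Ns = s$. Thus the whole argument becomes a search for, and the engineering of, appropriate normal subgroups fixing each element of the compactification.

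First, I would handle the unipotent radical $\mathbf{R}_u(\mathbf{G})(k)$ by induction on nilpotency class. The base case, abelian groups, is immediate: when $G$ is abelian, $w(G)$ is itself commutative by separate continuity applied to the dense abelian subgroup, so $sG = Gs$ automatically. For the inductive step I would pass to the quotient by the center $Z(\mathbf{R}_u)(k)$, invoke the inductive hypothesis on $\mathbf{R}_u/Z(\mathbf{R}_u)$, and combine Lemma~\ref{lem : qutient of compactification-centric is compactification-centric} with Lemma~\ref{if fixed point normal then induce} to push centricity across the central extension, using the abelianity of $Z(\mathbf{R}_u)(k)$.

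Next I would address the reductive Levi $\mathbf{L}(k)$. Over archimedean $k$, the identity component $\mathbf{L}(k)^\circ$ is a connected Lie group of finite index in $\mathbf{L}(k)$, to which Ruppert's theorem applies directly; one then bridges the finite component group, for which an auxiliary argument is required since the established lemmas descend centricity to quotients rather than lifting it from subgroups. Over non-archimedean $k$, where $\mathbf{L}(k)$ is totally disconnected, Ruppert's proof breaks down and must be replaced — presumably using Bruhat--Tits type structure, namely that $\mathbf{L}(k)$ is generated by its compact open subgroups and by the $k$-points of unipotent radicals of minimal parabolic subgroups — in order to show directly that $s\mathbf{L}(k) = \mathbf{L}(k)s$ for every $s \in w(\mathbf{L}(k))$. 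Finally, the semidirect product structure combined with another application of the lemmas would yield compactification-centricity of the full $\mathbf{G}(k)$.

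I anticipate the main obstacle to lie in the non-archimedean reductive case, where Ruppert's connectedness argument fails outright and must be reconstructed from the finer algebraic structure of totally disconnected reductive groups. A secondary but non-trivial difficulty is the finite-index step in the archimedean reductive case: transferring centricity from a finite-index open subgroup to the ambient group is not directly covered by the stated lemmas and will likely require a dedicated argument exploiting the specific structure of the finite component group together with the extension of WAP functions through a finite cover.
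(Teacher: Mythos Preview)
Your proposal has a genuine gap in the unipotent inductive step, and the same gap recurs at the semidirect-product assembly. Lemma~\ref{if fixed point normal then induce} requires, for the specific $s$ under consideration, that $sN = s$ (or $Ns = s$); merely knowing that $N = Z(\mathbf{R}_u)(k)$ is central and that the quotient is compactification-centric does not supply this. Centrality gives $sn = ns$ for each $n \in N$, and the quotient hypothesis gives $sgM = g'sM$ for some $g' \in U$, where $M$ is the minimal ideal of $\overline{N}$; but $M$ typically lies in $\overline{N} \setminus N$, so one cannot conclude $sg \in Us$. The combination of Lemma~\ref{lem : qutient of compactification-centric is compactification-centric} (which descends from $G$ to $G/N$) with Lemma~\ref{if fixed point normal then induce} does not yield an upward passage through a central extension without the missing hypothesis $sN = s$. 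The same obstruction blocks your final step: knowing that $\mathbf{L}(k)$ and $\mathbf{R}_u(\mathbf{G})(k)$ are each compactification-centric does not, via the stated lemmas, give the result for $\mathbf{G}(k)$ unless one already knows $s\,\mathbf{R}_u(\mathbf{G})(k) = s$ for each $s$, which is precisely the hard content.

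The paper's route is entirely different and sidesteps the reductive Levi altogether. It first treats \emph{algebraic compact-by-solvable} groups (those whose Levi centralises the maximal $k$-split torus) by induction on $k$-dimension: for each $s \in w(G)$ outside $\overline{Z(G)}$, a sequence of Mautner-type lemmas (Lemmas~\ref{lem : F_R(s) non trivial}, \ref{lem : tehre is T inv vector}, \ref{lem : tehre is C inv vector}, \ref{lem : in kernal and moved}) manufactures a nontrivial $k$-algebraic normal subgroup $N \le F_R(s)$, so that $sN = s$ holds \emph{by construction} and Lemma~\ref{if fixed point normal then induce} applies legitimately. Minimal parabolic subgroups are of this type. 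For general $\mathbf{G}(k)$ one then uses that any minimal parabolic $P$ is cocompact: Ruppert's decomposition writes every $s \in w(G)$ as $\sigma x = y\sigma$ with $\sigma \in \overline{P}$, and centricity of $P$ inside its own closure transfers the relation to $G$. In particular, the non-archimedean reductive case you correctly flag as the main obstacle is never confronted head-on; it is routed around via parabolics.
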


Throughout this section, unless stated otherwise,  we let $k$ be a non-Archimedean local field of characteristic zero. These fields are finite field extensions of $\bbQ_p$ \cite[Ch I, Theorm 5]{MR1344916}. 
We let $|\cdot|$ denote the absolute value on $k$. 
Algebraic groups over $k$ will be identified with their $\bar{k}$ points and will be denoted by boldface letters. Their group of $k$-points will be denoted by corresponding Roman letters.
Given a $k$ algebraic group $\bfG$, we regard $G=\bfG(k)$ as a topological group equipped with a $k$-analytic structure. 
We denote its unipotent radical by \( \Rad{u}{\mathbf{G}} \), and the \( k \)-points of the unipotent radical by \( \Rad{u}{G} \). The \( k \)-points of the Lie algebra of the unipotent radical are denoted by \( \mathrm{U} = \Lie(\Rad{u}{G}) \).

According to \cite[I.1.3.2]{margulis1991discrete}, there exists an isomorphism of \( k \)-varieties \( \Rad{u}{G} \cong \mathrm{U} \), and the adjoint action of \( G \) on \( \mathrm{U} \) is \( G \)-equivariant. 
Specifically, for any \( X \in \mathrm{U} \), the action is given by \( \log(\Ad_g(X)) = g.\exp(X) \), where \( \Ad \colon G \to \mathrm{GL}(\mathrm{U}) \) represents the adjoint representation of \( G \) on the Lie algebra of its unipotent radical. And \(\log \) is the algebraic logarithm map from $\Rad{u}{G}$ to its Lie-algebra $\mathrm{U}$.

Our approach to proving the compactification-centric property involves an initial focus on minimal parabolic subgroups.

\begin{definition}
    Let $G$ be a topological group, and $S$ a semi-topological semi-group compactification.
    The right (left) Lie-fixed points for $s$ are defined as follows;
    $\mathrm{F}_R(s)=\{X \in \mathrm{U}   \colon \forall \lambda\in k ,\  \exp(\lambda X)\in F_R(s) \}$ ($\mathrm{F}_L(s)=\{X\in \mathrm{U}  \colon \forall \lambda\in k ,\  \exp(\lambda X)\in F_L(s)\} $. 
\end{definition}
\begin{remark}
    Note that $\exp(\mathrm{F}_R(s))\subset F_R(s)$
\end{remark}
\begin{lemma}
    \label{lem : ad^2=0 then fixed}
    Let $s\in w(G)$, assume $v\in \mathrm{F}_R(s)$ then for any $u\in \mathrm{U}$ if $\ad_v^m(u)=0$ then $\ad_v^{m-1}(u)\in  \mathrm{F}_R(s) $.
\end{lemma}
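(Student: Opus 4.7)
The plan is to isolate the leading coefficient of the polynomial $\lambda\mapsto \Ad_{\exp(\lambda v)}(u)$ via a Mautner-type scaling limit inside the compact semi-group $w(G)$, which will force $\ad_v^{m-1}(u)$ into $\mathrm{F}_R(s)$.

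First I would record the basic polynomial identity. Because $\ad_v^m(u)=0$, the series $\Ad_{\exp(\lambda v)}(u)=e^{\lambda\ad_v}(u)$ truncates to
\[
P(\lambda)=\sum_{j=0}^{m-1}\frac{\lambda^{j}}{j!}\ad_v^{j}(u),
\]
a polynomial in $\lambda$ of degree $\le m-1$ whose leading coefficient is $\tfrac{1}{(m-1)!}\ad_v^{m-1}(u)$. Combining the conjugation formula $\exp(\lambda v)\exp(\mu u)\exp(-\lambda v)=\exp(\mu P(\lambda))$ with associativity in $S$ and the hypothesis $s\exp(\lambda v)=s$ yields the master identity
\[
s\exp(\mu P(\lambda))=s\exp(\mu u)\exp(-\lambda v),\qquad\lambda,\mu\in k.
\]

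Next, for any fixed $\mu_0\in k$ I would substitute $\mu=\mu_0/\lambda^{m-1}$ (for $\lambda\neq 0$) and let $|\lambda|\to\infty$ in $k$. The argument of the exponential on the left becomes $\sum_{j=0}^{m-1}\tfrac{\mu_0\lambda^{j-m+1}}{j!}\ad_v^{j}(u)$; the terms with $j<m-1$ carry a strictly negative power of $\lambda$ and hence tend to $0$ in $\mathrm{U}$, while the $j=m-1$ term is the constant $\tfrac{\mu_0}{(m-1)!}\ad_v^{m-1}(u)$. By compactness of $S$ I pass to a subnet $\lambda_\alpha$ along which $\exp(-\lambda_\alpha v)\to \xi\in S$. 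Continuity of $\exp\colon \mathrm{U}\to G$ followed by continuity of left multiplication by $s$ then pushes the left-hand side to $s\exp\bigl(\tfrac{\mu_0}{(m-1)!}\ad_v^{m-1}(u)\bigr)$. On the right, $\exp(\mu_0 u/\lambda_\alpha^{m-1})\to e$ in $G$ while $\exp(-\lambda_\alpha v)\to\xi$ in $S$; the joint continuity of multiplication on $G\times S$ (Ellis--Lawson, as recalled in the preliminaries) sends the product to $\xi$, and then left multiplication by $s$ sends the full expression to $s\xi$. But $s\exp(-\lambda v)=s$ for every $\lambda$, so passing to the same subnet forces $s\xi=s$, hence the right-hand side converges to $s$. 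Equating the two limits gives
\[
s\exp\bigl(\tfrac{\mu_0}{(m-1)!}\ad_v^{m-1}(u)\bigr)=s
\]
for every $\mu_0\in k$; since $(m-1)!$ is invertible in $k$ (characteristic zero), $\mu_0/(m-1)!$ ranges over all of $k$, which is precisely the statement $\ad_v^{m-1}(u)\in \mathrm{F}_R(s)$.

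The main obstacle is the limit step: $\exp(-\lambda v)$ typically fails to converge in $G$, but compactness of $S$ produces a convergent subnet, and the joint continuity on $G\times S$ is what allows the product $\exp(\mu_0 u/\lambda^{m-1})\exp(-\lambda v)$ to be pushed through the limit; identifying the limit $\xi$ as a right-identity for $s$ is what finally closes the argument. Note that the scaling genuinely needs $m\ge 2$ for the subleading terms to die; the case $m=1$ would read $u\in\mathrm{F}_R(s)$ under $\ad_v(u)=0$, which is false in general (take $v=0$), so the lemma is to be read with $m\ge 2$ (equivalently, in the interesting case $\ad_v^{m-1}(u)\ne 0$).
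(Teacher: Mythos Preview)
Your proof is correct and follows essentially the same route as the paper: scale $u$ by $\lambda^{-(m-1)}$, pass to a subnet so that $\exp(-\lambda_\alpha v)\to\xi$ in $S$, invoke Ellis--Lawson joint continuity on $G\times S$, and use $s\xi=s$ (the paper's $ss'=s$) to close the argument. The only cosmetic difference is that you multiply by $s$ from the outset via your master identity, whereas the paper first establishes $\xi\exp(\cdot)=\xi$ for the auxiliary limit $\xi=s'$ and then left-multiplies by $s$; your caveat about $m\ge 2$ is a valid clarification the paper leaves implicit.
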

\begin{proof}
    Let $\{\lambda_n\} \in k$ with $ \lambda_n \to \infty$. Observe that for any $\lambda \in k$, $\ad_v^m (\lambda u)=0$ thus:
    $$
    \Ad_{\exp(\lambda_n v) }(\lambda^{-(m-1)}_n \lambda u)=\sum_{k=0}^\infty \frac{\ad_{\lambda_n v}^k}{k!}(\lambda^{-(m-1)}_n \lambda u) 
    = 
    \sum_{k=0}^{m-2} \lambda_n^{k-(m-1)}\frac{\ad_{ v}^k}{k!}( \lambda u) + \frac{\lambda}{(m-1)!}\ad_v^{m-1}(u)
    $$
    Making 
    $$\lim_n \exp(\lambda_n v).\exp(\lambda_n^{-(m-1)} \lambda u)= \exp( \frac{\lambda}{(m-1)!}\ad_v^{m-1}(u))$$
    Now let $s'=\lim_n \exp(-\lambda_n v)$ By the Ellis-Lawson joint continuity theorem, we get that:
        \[
    s'=e\cdot s' = \lim_n \exp(\lambda_n^{-(m-1)}  \lambda u) \cdot   \exp(-\lambda_n v)
    \]
    Therefore
    \[ 
    s'=\lim_n \exp(-\lambda_n v) \cdot (\exp(\lambda_n v).\exp(\lambda_n^{-(m-1)} \lambda u) ) = s' \exp(\frac{\lambda}{(m-1)!} \ad_v^{m-1}(u))) 
    \]
    Observe that since $v\in \mathrm{F}_R(s)$ the element $s'$ is a limit of element in $F_R(s)$ we get that $ss'=s$.
    Therefore \[
    s= ss'=ss'\exp(\frac{\lambda}{(m-1)!} \ad_v^{m-1}(u)))=s \exp(\frac{\lambda}{(m-1)!} \ad_v^{m-1}(u)))
    \]
    As this hold for any $\lambda\in k $ we get that $\ad_v^{m-1}(u))\in \mathrm{F}_R(s)$ as needed.
\end{proof}
\begin{cor} \label{cor: if fixed point in U then fixed in center}
    If $\mathrm{F}_R(s)$ is non-trivial, then $ Z(U)\cap F_R(s)$ is non-trivial.
\end{cor}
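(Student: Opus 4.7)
The plan is to pick $v \in \mathrm{F}_R(s) \setminus \{0\}$ sitting as deep as possible in the lower central series of $\mathrm{U}$, and then use Lemma~\ref{lem : ad^2=0 then fixed} to force $v$ into the center $Z(\mathrm{U})$. Write $\mathrm{U} = C^1 \mathrm{U} \supseteq C^2 \mathrm{U} \supseteq \dots \supseteq C^c \mathrm{U} \supsetneq C^{c+1} \mathrm{U} = 0$ for the lower central series, where $c$ is the nilpotency class of $\mathrm{U}$. I will use the standard inclusion $[C^i \mathrm{U}, C^j \mathrm{U}] \subseteq C^{i+j} \mathrm{U}$ (a short Jacobi-identity induction on $i$) together with its consequence $C^c \mathrm{U} \subseteq Z(\mathrm{U})$. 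For nonzero $w \in \mathrm{U}$, let $d(w)$ denote the maximal $i$ with $w \in C^i \mathrm{U}$; note that $1 \leq d(w) \leq c$.

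Among nonzero elements of $\mathrm{F}_R(s)$, choose $v$ maximizing the depth $d := d(v)$; such a maximum exists because $d$ takes only finitely many values. I will show $v \in Z(\mathrm{U})$, which will give the desired nonzero element of $Z(\mathrm{U}) \cap \mathrm{F}_R(s)$. Suppose toward contradiction that $\ad_v \neq 0$ on $\mathrm{U}$. Since $\ad_v$ is nilpotent, there is a largest $m \geq 1$ with $\ad_v^m \neq 0$ as an operator, and we can pick $u \in \mathrm{U}$ with $\ad_v^m(u) \neq 0$. Because $\ad_v^{m+1}(u) = 0$, Lemma~\ref{lem : ad^2=0 then fixed} applied with exponent $m+1$ delivers $w := \ad_v^m(u) \in \mathrm{F}_R(s) \setminus \{0\}$. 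An induction on $k$ shows $\ad_v^k(u) \in C^{kd+1} \mathrm{U}$: the base case $k = 0$ is $u \in \mathrm{U} = C^1 \mathrm{U}$, and the inductive step uses $v \in C^d \mathrm{U}$ together with $\ad_v(C^{kd+1} \mathrm{U}) \subseteq [C^d \mathrm{U}, C^{kd+1} \mathrm{U}] \subseteq C^{(k+1)d+1} \mathrm{U}$. Hence $d(w) \geq md + 1 > d$, contradicting the maximality of $d$. Therefore $v \in Z(\mathrm{U})$, as claimed.

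The substantive analytic content is already packaged in Lemma~\ref{lem : ad^2=0 then fixed}; what is left is the extremal selection of $v$ and the routine lower-central-series bookkeeping above, so I do not expect any serious obstacle. The one conceptual observation driving the argument is that each application of $\ad_v$, with $v$ at depth $d$, shifts the resulting element at least $d$ levels deeper in the filtration, which is precisely why iterating the lemma must bottom out inside $Z(\mathrm{U})$ after finitely many steps.
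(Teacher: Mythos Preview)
Your proof is correct and follows the same line as the paper's: both invoke Lemma~\ref{lem : ad^2=0 then fixed} on a nonzero $v\in\mathrm{F}_R(s)$ to manufacture further elements of $\mathrm{F}_R(s)$ and argue that one of them must lie in the center. The difference is in how centrality is secured. The paper picks the minimal $m$ with $\ad_v^m(\mathrm{U})=0$ and asserts directly that $\ad_v^{m-1}(\mathrm{U})\subseteq\mathrm{F}_R(s)$ is central in $\mathrm{U}$; read literally, that clause needs justification (elements of $\ad_v^{m-1}(\mathrm{U})$ a priori only centralize $v$), so the paper's argument is implicitly an iteration. Your lower-central-series extremal argument makes that iteration explicit and visibly terminating: since $\ad_v$ raises depth by at least $d\ge 1$, any nonzero $w=\ad_v^m(u)$ produced by the lemma has $d(w)\ge md+1>d$, forcing a maximal-depth $v$ to satisfy $\ad_v=0$. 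One small omission: the corollary is stated at the group level ($Z(U)\cap F_R(s)$), so you should close, as the paper does, by noting that $\exp(v)$ is a nontrivial element of $Z(U)\cap F_R(s)$.
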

\begin{proof}
    Let $v\in \mathrm{F}_R(s)$ then there is a minimal $m$ such that $\ad_v^m(\mathrm{U})=0$ making $\ad_v^{m-1}(\mathrm{U})\in  \mathrm{F}_R(s) $ central and by minimality of $m$ non-trivial. 
    Let $x$ be non-trivial element in $\ad_v^{m-1}(\mathrm{U})$, since it is central in $\mathrm{U}$, $\exp(x)$ is central in $U$.
\end{proof}

%
%
%
%Maybe subsection here
By the structure of the weak almost periodic compactification of quotients \cite[Ch III, Proposition 1.10]{ruppert2006compact}, we may consider the following commutative diagram
$$
    \begin{tikzcd}
        G \arrow[r] \arrow[d] & w(G)\arrow[d] \\
        G/\ker(\Ad)%=\GL(\Lie (U)) 
        \arrow[r]  & %w(\GL(\Lie (U)))=
        w(\Ad(G))=w(G)/M
    \end{tikzcd}
$$
With $M=M(\overline{\ker(\Ad)})$ the minimal ideal of the sub-semi-group $\overline{\ker(\Ad)}\le w(G)$.
Hence for $ \lim_n g_n=s$ we get that 
$$\lim_n \Ad_{g_n}= sM $$
We show that if $sM\notin \Ad(G)$, i.e. the limit $g_n$ goes to infinity modulo the kernel of the adjoint map, then $F_R(s)$ is non-trivial.

\begin{lemma} \label{lem : if to infinity tehre is zero or infinite vector}
    Let $s\in w(G)$ with $\lim_n g_n=s$. Assume that $\Ad_{g_n}$ does not converge in $\GL(\mathrm{U})$. Then there exist $v_0\in \mathrm{U} $ such that $\lim_n \| Ad_{g_n}(v_0)\|=0$ or there exist $v_\infty \in \mathrm{U}$ such that $\lim_n \| Ad_{g_n}(v_0)\|=\infty$.
\end{lemma}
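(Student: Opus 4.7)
I would set $T_n \coloneqq \Ad_{g_n}$ and analyze the sequence inside $\GL(\mathrm{U})$ by splitting on whether the operator norm $\|T_n\|$ remains bounded, leveraging finite-dimensionality of $\mathrm{U}$ (so bounded sets are relatively compact, and the unit sphere is compact) together with the full-sequence convergence $g_n \to s$ in $w(G)$. The strategy is the standard linear-algebraic dichotomy: either some direction is blown up (giving $v_\infty$) or some direction is collapsed (giving $v_0$), with the only remaining obstacle being bounded-but-oscillating behavior, which WAP-convergence is designed to rule out.

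\textbf{Two cases.} In the first case, suppose $\|T_n\|$ is unbounded; passing to a subsequence with $\|T_{n_k}\| \to \infty$, pick unit vectors $u_k$ attaining the operator norm, $\|T_{n_k} u_k\| = \|T_{n_k}\|$, and extract on the compact unit sphere of $\mathrm{U}$ so that $u_k \to v_\infty$. The triangle-inequality estimate $\|T_{n_k} v_\infty\| \ge \|T_{n_k}\|\bigl(1 - \|u_k - v_\infty\|\bigr)$ then forces $\|T_{n_k} v_\infty\| \to \infty$. In the second case, $\|T_n\|$ is bounded, so $\{T_n\}$ is relatively compact in the finite-dimensional space $\mathrm{End}(\mathrm{U})$, and a subsequence converges to some $T_\infty \in \mathrm{End}(\mathrm{U})$. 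Because $T_n$ does not converge in $\GL(\mathrm{U})$ by hypothesis, while the full sequence $\Ad(g_n)$ has a well-defined limit $sM$ in $w(\Ad(G)) \cong w(G)/M$ (the quotient structure recorded just before the lemma), the subsequential limit $T_\infty$ must be non-invertible; any $v_0 \in \ker T_\infty \setminus \{0\}$ then satisfies $\|T_n v_0\| \to 0$.

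\textbf{Main obstacle.} The delicate point is upgrading the above subsequential conclusions to full-sequence convergence and, in particular, ruling out the pathological scenario in which $T_n$ is bounded in $\mathrm{End}(\mathrm{U})$ but admits two distinct invertible subsequential limits. Here the hypothesis $g_n \to s$ in the Hausdorff compact semi-topological semi-group $w(G)$ is essential: uniqueness of the image $sM$ in $w(\Ad(G))$ must be leveraged to pin down a unique boundary behavior of $T_n$ in $\GL(\mathrm{U})$. I expect the cleanest route to run through the Cartan (KAK) decomposition $T_n = k_n a_n k_n'$ with $k_n, k_n' \in K$ a maximal compact and $a_n$ in a split diagonal torus: compactness of $K$ allows extraction $k_n \to k_\infty$, $k_n' \to k_\infty'$, reducing the problem to the diagonal $a_n$; a coordinate $|a_n^{(i)}| \to \infty$ or $\to 0$ then furnishes $v_\infty$ or $v_0$ directly (as a pull-back of the $i$-th standard basis vector by $k_\infty'^{-1}$), while the WAP-limit $sM$ provides the rigidity that prevents oscillation among different coordinates.
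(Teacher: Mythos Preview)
Your two cases are exactly the paper's argument, just unpacked. The paper argues by contradiction: it assumes that for every $v$ the sequence $\|\Ad_{g_n}(v)\|$ is bounded away from zero (and, implicitly, bounded above), extracts a subsequence with $\Ad_{g_{n_k}}\to A\in \GL(\mathrm{U})$, and then contradicts the hypothesis via the uniqueness of the WAP limit $sM$ in $w(\Ad(G))\cong w(G)/M$. That last step is precisely the mechanism you invoke in Case~2 to force $T_\infty$ to be singular; your Case~1 handles the unbounded-norm branch that the paper leaves implicit.

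The ``main obstacle'' you flag is not a genuine obstacle here, and the KAK detour is unnecessary. Two remarks. First, the paper's own proof does \emph{not} upgrade to full-sequence limits either: negating ``for every $v$, $\|\Ad_{g_n}v\|$ is bounded away from zero'' only yields a vector with a subsequence tending to~$0$, and the paper is content with that. Second, if you look at how the lemma is actually consumed in the next lemma on $F_R(s)$, you will see that only ``$\Ad_{g_n}(v_0)\to 0$'' (after passing to a subsequence) and ``$\|\Ad_{g_n}(v_\infty)\|$ is unbounded'' are used; the argument there freely replaces $g_n$ by subsequences. So the full-limit phrasing in the lemma is slightly stronger than what is proved or needed, and what you already have in Cases~1 and~2 suffices. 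The one place where the hypothesis $g_n\to s$ genuinely does work is the step you already wrote: it pins down the limit in $w(\Ad(G))$, so an invertible subsequential limit would force convergence of the whole sequence in $\GL(\mathrm{U})$, contradicting the assumption. No Cartan decomposition is required.
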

\begin{proof}
    Assume in contradiction that for any $v\in \Lie(U)$, $\|Ad_{g_n}(v)\|$ is bounded away from zero, in particular by local-compactness we can choose a basis $\set{e_i}$ and a sub-sequence $n_k$ such that $\lim_n \Ad_{g_{n_k}}(e_i)=w_i$.  And get that $\lim_n \Ad_{g_{n_k}}=A\in \GL(\Lie(U))$ defined by $A(e_i)=w_i$, in contradiction to the assumption that $\lim_n \Ad_{g_{n_k}}=\lim_n \Ad_{g_{n}}$ goes to infinity.
\end{proof}

The following is a key lemma in our analysis. 

\begin{lemma} \label{lem : F_R(s) non trivial}
        Let $s\in w(G)$ with $\lim_n g_n=s$. Assume that $\Ad_{g_n}$ does not converge in $\GL(\mathrm{U})$.
        Then the subgroup $\mathrm{F}_R(s)$ is non-trivial.
        %, in particular there exists a non-trivial element $e\ne g\in G$ such that $sg=s$, i.e. $F_R(s)$ is non-trivial.
        
\end{lemma}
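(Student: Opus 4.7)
The plan is to construct a non-zero $v \in \mathrm{U}$ with $s\exp(\lambda v) = s$ for every $\lambda \in k$, which is precisely the condition $v \in \mathrm{F}_R(s)$. The workhorse is the identity
\[
g_n \exp(\lambda v_n) \;=\; \exp(\lambda \Ad_{g_n}(v_n))\, g_n, \qquad v_n \in \mathrm{U},\ \lambda \in k,
\]
combined with the Ellis--Lawson joint continuity of multiplication on $S \times G$ and on $G \times S$. If I can find, after passing to a subsequence, vectors $v_n \in \mathrm{U}$ with $v_n \to v \neq 0$ and $\Ad_{g_n}(v_n) \to 0$, then the left-hand side converges to $s\exp(\lambda v)$ by joint continuity on $S \times G$, while the right-hand side converges to $e \cdot s = s$ by joint continuity on $G \times S$. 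This gives $s\exp(\lambda v) = s$ for every $\lambda \in k$, as required.

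The production of $v_n$ proceeds by invoking the dichotomy of Lemma~\ref{lem : if to infinity tehre is zero or infinite vector}. In its first alternative, a fixed $v_0 \neq 0$ already satisfies $\|\Ad_{g_n}(v_0)\| \to 0$, and the constant choice $v_n := v_0$ works immediately. In its second alternative, some $v_\infty$ satisfies $\|\Ad_{g_n}(v_\infty)\| \to \infty$; to convert an expanding direction into a shrinking one, I would pass by compactness of $w(G)$ to a subsequence on which $g_n^{-1} \to t$ in $w(G)$, observe that $\Ad_{g_n^{-1}}$ itself cannot converge in $\GL(\mathrm{U})$, and apply the same dichotomy to the inverse sequence. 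A shrinking direction for $\Ad_{g_n^{-1}}$ then yields, via the Ellis--Lawson argument applied to $g_n^{-1} \to t$, a fixed vector which one transfers back to $s$ using the anti-involution $s \mapsto s^{\vee}$ of $w(G)$ induced by the group inversion on $G$.

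The step I expect to be the main obstacle is the second alternative: a naive application of Ellis--Lawson to $g_n \to s$ with an expanding direction of $\Ad_{g_n}$ only produces a left-fixed vector in $\mathrm{F}_L(s)$, not an element of $\mathrm{F}_R(s)$. Closing this gap requires either the inversion trick above, or else an appeal to Corollary~\ref{cor: if fixed point in U then fixed in center} to locate a central element of the stabilizer and exploit that $Z(\mathrm{U})$ is normal in $G$; in either route, the technical heart is synchronizing the normalising scalars and the passage to the inverse sequence with the convergence in $w(G)$. Once this is in place, the remainder of the argument is a direct application of joint continuity together with local compactness of the unit ball in the finite-dimensional $k$-vector space $\mathrm{U}$.
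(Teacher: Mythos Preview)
Your Case~1 argument (a contracting direction for $\Ad_{g_n}$) is correct and matches the paper exactly. The gap is in Case~2, and neither of your two proposed fixes closes it. The anti-involution $\tau$ on $w(G)$ extending $g\mapsto g^{-1}$ is an \emph{anti}-homomorphism, so if $t=\tau(s)$ and you produce $v\in\mathrm{F}_R(t)$ via the Case~1 argument applied to $g_n^{-1}\to t$, applying $\tau$ to $t\exp(\lambda v)=t$ yields $\exp(-\lambda v)\,s=s$, i.e.\ $v\in\mathrm{F}_L(s)$ --- precisely the left-fixed vector you were trying to escape. Your alternative suggestion (Corollary~\ref{cor: if fixed point in U then fixed in center} plus normality of $Z(\mathrm{U})$) fails for the same structural reason: that corollary takes $\mathrm{F}_R(s)\neq 0$ as hypothesis, and knowing a fixed vector lies in a normal subgroup does not let you pass from $us=s$ to $su'=s$ before compactification-centricity is established --- which is exactly what this lemma is meant to prove.

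What the paper actually does: after obtaining $v'\in\mathrm{F}_L(s)$ from the expanding direction, it first checks whether $\Ad_{g_n^{-1}}$ has an \emph{expanding} direction $w$; if so, rescaling gives $w_0\in\mathrm{F}_R(s)$ directly from $g_n\exp(\lambda\,\Ad_{g_n^{-1}}(\lambda_n w))=\exp(\lambda\lambda_n w)\,g_n$. The hard residual case is when $\Ad_{g_n^{-1}}$ has no expanding direction (think $\Ad_{g_n}$ a pure dilation). There one finds $\beta_n\to\infty$ with $\Ad_{g_n^{-1}}(\beta_n v')\to\tilde v\neq 0$, and would like $\lim_n\exp(\lambda\beta_n v')\,g_n=s$; but the first factor diverges in $G$, so Ellis--Lawson joint continuity on $G\times S$ is unavailable. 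The paper overcomes this by introducing the one-parameter group $P=\{\exp(\alpha v'):\alpha\in k\}\subset F_L(s)$ and the minimal idempotent $s_0$ of $\overline{P}$: since $\overline{P}s=s$ one has $s_0 s=s$, and Ellis--Lawson applied to the compact group $s_0\overline{P}s_0$ acting on $s_0 S$ supplies the joint continuity needed to pass the divergent sequence $\exp(\lambda\beta_n v')$ through the limit and conclude $\tilde v\in\mathrm{F}_R(s)$. This step is the genuine content of the lemma and is absent from your proposal.
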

\begin{proof}
        If there exist $v_0\in \Lie(U)$ with $\Ad_{g_n}(v_0)\to 0$ then for any $\lambda\in k$, $g_n.\exp(\lambda v_0)\to e$ thus by the Ellis-Lawson joint continuity theorem
        \[
        s\cdot \exp(\lambda v_0) = \lim_n g_n \exp(\lambda v_0)g_n^{-1}g_n = \lim_n g_n.\exp(\lambda v_0) \cdot \lim_n g_n  =s 
        \]
        
        So we may assume that there is a $v_\infty$ with $\|\Ad_{g_n}(v_\infty)\|$ unbounded, and  
        we set $\gamma_n\to 0$ such that: 
        $$
        \Ad_{g_n}(\gamma_n v_\infty)\to v' 
        $$ 
        Making: 
        $$
        \lim_n g_n.\exp(\alpha \gamma_n v_\infty) =\exp( \alpha 
        v')\ ,\ \lim_n \exp( \alpha  \gamma_n v_\infty) =e$$
        For any $\alpha \in k$. The joint implies that  
        \[
         s=s\cdot e=\lim_n g_n \cdot \lim_n \exp(\alpha  \gamma_n v_\infty)= \lim_n g_n.\exp(\alpha 
        \gamma_n v_\infty) \cdot g_n = 
        \lim_n g_n.\exp(\alpha 
        \gamma_n v_\infty) \cdot \lim_n g_n 
        =\exp( \alpha  v') s
        \]
        so for any $\alpha \in k$ we get that $\exp( \alpha  v')\in F_L(s)=\{g\in G \ : \ gs=s\}$, making $\mathrm{F}_L(s)$ non trivial.
        
        Now we need to move from left to right, i.e. show that there is a non-trivial element in $\mathrm{F}_R(s)$.
        Observe that if there exist some $w\in \Lie(U)$ with  $ \|\Ad_{g_n^{-1}} w\|$ unbounded we can find $\lambda_n\to 0 $ with $$\Ad_{g_n^{-1}} (\lambda_n w)\to w_0\ne 0$$ 
        Making $$g_n^{-1}.\exp(\lambda_n w)\to \exp(w_0)$$
        Hence for any $\lambda \in k$
        $$
        s\cdot \exp(\lambda w_0) =\lim_n g_n \lim_n g_n^{-1}.\exp(\lambda \lambda_n w) =\lim_n \exp(\lambda \lambda_n w) \cdot  \lim_n g_n= e\cdot s=s
        $$
        So $\exp(w_0)\in \mathrm{F}_R(s)$.
        Hence we may assume that $\Ad_{g_n^{-1}} v'$ does not diverge to infinity, and without loss of generality assume that $\lim_n \| \Ad_{g_n^{-1}} v' \| =\beta$.
        %making $\lim_n g_n^{-1}.\exp(v')=exp(\beta v')$
        Since $v'\in \mathrm{F}_L(s)$ 
        we get that for any $\lambda \in k$:
        $$
        s \exp (\lambda \beta v') =\lim_n  g_n\cdot \lim_n g_n^{-1}.\exp(\lambda v') =  \lim_n \exp(\lambda v')g_n = \exp(\lambda v')s =s
        $$
        Thus $\beta v' \in \mathrm{F}_R(s)$. It may happen that $\beta=0$,
        in this case there exist $\beta_n\in k$ such that $ \Ad_{g_n^{-1}} (\beta_n v')=\beta_n \Ad_{g_n^{-1}} ( v')\to \tilde{v}$ converges to a non trivial vector.

        Denote by $P=\{ \exp(\alpha v_0) \ : \ \alpha \in k\}$ the one-parameter subgroup, 
        and by $s_0$ the minimal ideal in $\overline{P}\subset S$, 
        
        By the Ellis-Lawson joint continuity theorem, the map $s_0Ss_0\times s_0S\to S$ is jointly continuous at $s_0\overline{P}s_0\times s_0S$.
        Define $v_n = \beta_n v'\in P$ with $ \Ad_{g_n^{-1}} (v_n) \to \tilde{v}$ also given $\lambda\in k$ denote by $z_\lambda=\lim_n \exp(\lambda v_n)\in \overline{P}$. 

    Notice that $s_0$ commutes with $P$ and thus with $\overline{P}$, also, observe that for any $p\in P$ $ps=s$ thus $Ps=s$ and $\overline{P}s=s$ making $s_0z_\lambda s_0 s_0\in F_R(s)$.
        
        We have that for any $\lambda \in k$
        $\lim_n g_n^{-1}.\exp(\lambda v_n)=\exp(\lambda \tilde{v})$. By joint continuity on $S\times G$ we have that 
        $$s \cdot \exp(\lambda \tilde{v}) = \lim_n s_0g_n \cdot \lim_n  g_n^{-1}.\exp(\lambda v_n) = \lim_n s_0 \exp(\lambda \lambda v_n) g_n$$
        and since $s_0$ is a central idempotent in $\overline{P}$ we get that $s_0 \exp(\lambda v_n) g_n = s_0^2 \exp(\lambda v_n) g_n = s_0 \exp(\lambda v_n) s_0 s_0 g_n$
        Finally, by joint continuity of $s_0Ss_0\times s_0S$ on $s_0z_\lambda s_0\times s_0S $ we get that
        $$ s=(s_0z_\lambda s_0 s_0) s= \lim_n s_0 \exp(\lambda v_n) s_0 \cdot \lim_n s_0 g_n = \lim_n s_0 \exp(\lambda v_n) s_0  s_0 g_n = s_0 \exp(\lambda v_n) g_n = s \exp(\lambda \tilde{v})$$
        Hence $\tilde{v}\in \mathrm{F}_R(s)$ as needed.
    \end{proof}
\begin{remark}
     By changing the role of $g_n$ and $g_n^{-1}$ in the proof above,  we may get that $\mathrm{F}_L(s)$ is non-trivial as well.
\end{remark}
In the case where $\mathrm{F}_R(s)$ is not trivial, by Corollary \ref{cor: if fixed point in U then fixed in center} we get that $Z(\Rad{u}{G})$ contains a non-trivial $k$-algebraic subspace consisting of $s$ fixed points. We will now show that given a split torus $T$ we can obtain such a space which is moreover $T$-invariant.
\begin{lemma}
\label{lem : tehre is T inv vector}
    Let $T\le G$ be a $k$-split torus, and let $s\in w(G)$. Assume that $  \mathrm{F}_R(s)$ is non-trivial. Then there exists a $T$-invariant $k$-vector subspace in $ {F}_R(s)\cap Z(U)$. 
    Moreover, we may assume that the invariant subspace is contained in a weight space of the torus action on $Z(U)$.
\end{lemma}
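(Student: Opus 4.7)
The first observation is that $V:=\mathrm F_R(s)\cap Z(\mathrm U)$ is a non-trivial $k$-vector subspace: non-triviality is Corollary~\ref{cor: if fixed point in U then fixed in center}, and the subspace property follows because elements of $Z(\mathrm U)$ commute, so the Baker--Campbell--Hausdorff formula reduces to $\exp(\lambda u+\mu u')=\exp(\lambda u)\exp(\mu u')$ inside the subgroup $F_R(s)$. Since $T$ is $k$-split, the adjoint action of $T$ on $Z(\mathrm U)$ is diagonalizable over $k$, yielding a weight decomposition $Z(\mathrm U)=\bigoplus_\alpha Z(\mathrm U)_\alpha$.

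The key reduction is that it is enough to produce a \emph{single} non-zero weight vector $v_\alpha\in V$: the line $k\cdot v_\alpha$ is then automatically $T$-invariant (weight lines are) and lies inside a single weight space $Z(\mathrm U)_\alpha$, settling both conclusions of the lemma at once.

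To extract such a weight vector, I pick any non-zero $v\in V$ and write $v=\sum_\alpha v_\alpha$. Since $T$ is $k$-split and only finitely many weights appear, a generic cocharacter $\chi:\mathbb G_m\to T$ has pairwise distinct integer pairings $n_\alpha:=\langle\chi,\alpha\rangle$ on this finite set. Let $\alpha_0$ be the weight maximizing $n_{\alpha_0}$. For $r\in k^*$ with $|r|\to\infty$ define
\[
h_r:=\exp(\mu r^{-n_{\alpha_0}}v)\in F_R(s),\qquad g_r:=\chi(r)h_r\chi(r)^{-1}=\exp\Bigl(\mu\sum_\alpha r^{n_\alpha-n_{\alpha_0}}v_\alpha\Bigr).
\]
Then $h_r\to e$ and $g_r\to\exp(\mu v_{\alpha_0})$ in $G$. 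The target is $s\exp(\mu v_{\alpha_0})=s$ for every $\mu\in k$, i.e.\ $v_{\alpha_0}\in V$.

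The main step mimics the joint-continuity arguments used in Lemma~\ref{lem : F_R(s) non trivial}. Passing to a subsequence along which $\chi(r_n)$ and $\chi(r_n)^{-1}$ converge in the compact semi-group $w(G)$, I combine associativity, the identity $sh_r=s$, and the Ellis--Lawson joint-continuity theorem applied on $w(G)\times G$ and $G\times w(G)$ (whose applicability is triggered by $h_r\to e\in G$) to compute $sg_r\to s$ along this subsequence. On the other hand, separate continuity of left multiplication by $s$, combined with $g_r\to\exp(\mu v_{\alpha_0})$ in $G$, gives $sg_r\to s\exp(\mu v_{\alpha_0})$. Equating the two limits proves $v_{\alpha_0}\in V$.

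The central difficulty lies exactly in this limiting step: only separate continuity of multiplication is available in $w(G)$, so one must carefully segregate the factors that remain in $G$ (where Ellis--Lawson applies) from those that diverge in $w(G)$, and exploit that $h_r\to e$ is the ``bridge'' factor allowing the diverging $\chi(r)$ and $\chi(r)^{-1}$ to be handled. Should the direct computation of $\lim sg_r$ prove too brittle, a backup route is to work first with $s\tau$ for $\tau$ an idempotent in $\overline{\chi(k^*)}\subseteq w(G)$---there $\mathrm F_R(s\tau)\supseteq\mathrm F_R(s)$ inherits torus-equivariance built into $\tau$---and then descend back to $\mathrm F_R(s)$ using the identity $F_R(st)=t^{-1}F_R(s)t$ valid for $t\in G$.
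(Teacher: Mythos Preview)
Your reduction to extracting a single weight vector from $V=\mathrm F_R(s)\cap Z(\mathrm U)$ is the right target, and the observation that $V$ is a $k$-subspace is fine. The gap is in the limiting step, and it is not merely a matter of care: the argument as set up cannot be completed with the continuity available.

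Write $g_r=\chi(r)h_r\chi(r)^{-1}$ with $h_r\in F_R(s)$, $h_r\to e$ in $G$, and $\chi(r_n)\to\tau$, $\chi(r_n)^{-1}\to\sigma$ in $w(G)$. Ellis--Lawson gives joint continuity only on $w(G)\times G$ and $G\times w(G)$. In every associative regrouping of $s\chi(r_n)h_{r_n}\chi(r_n)^{-1}$ you are eventually forced to multiply two factors \emph{both} of whose limits lie in $w(G)\setminus G$ (coming from $\tau$ and $\sigma$), and there only separate continuity is available. The identity $sh_r=s$ does not help, because $h_r$ is sandwiched between the two diverging torus factors; it never appears adjacent to $s$. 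The most one can extract is this: from $s=sh_r=s\chi(r)^{-1}g_r\chi(r)$ one gets $s\chi(r)^{-1}=s\chi(r)^{-1}g_r$, and passing to the limit (now legitimately, since $g_r\to g_\infty$ in $G$) yields $g_\infty\in F_R(s\sigma)$. But $\sigma\notin G$, so your descent identity $F_R(st)=t^{-1}F_R(s)t$ does not apply, and the backup via an idempotent $\tau\in\overline{\chi(k^*)}$ does not give $T$-invariance of $F_R(s\tau)$ either (there is no reason $s$ and the torus interact well).

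The paper's proof fixes exactly this by \emph{swapping which factor diverges}. Instead of sending the torus element to infinity, it takes $t_n=\beta(1+\lambda_n^{-1})\to e$ in $G$ and lets the element of $F_R(s)$ diverge: $\exp(\lambda_n u)$ with $\lambda_n\to\infty$ and $u\in W$. Then $[t_n,\lambda_n u]\to\sum_i m_i u_i=:u'$ in $G$ (a derivative at the identity), while $\exp(\lambda_n u)\to s'$ in $w(G)$ with $ss'=s$. Now every limit needed is of the form $G\times w(G)$ or $w(G)\times G$: one gets $s'u'=\lim t_n\exp(\lambda_n u)t_n^{-1}=s'$, hence $su'=s$. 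Subtracting a multiple of $u$ reduces the number of weight components, giving the contradiction. The moral: the diverging factor must be the one already in $F_R(s)$ so that $s$ can absorb it; the torus contribution must stay in $G$.
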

%555 go over
\begin{proof}
    By corollary \ref{cor: if fixed point in U then fixed in center} 
    we get that $ Z({U})$ contains a non trivial $k$-vector subspace $W\le F_R(s) $.
    We need to show the existence of a $T$ invariant subspace.
    Let $Z({U})=\oplus_i V_{\alpha_i}$ be a weight space decomposition for the torus action.
    
    It suffices to show that $V_{\alpha_i}\cap  W$ is non-trivial for some $i$. Since this will give a non-trivial $v_i\in V_{\alpha_i}\in W$ making $<v_i>$ a $T$-invariant  $k$-vector subspace in $ {F}_R(s)$.
    
    Assume by contradiction that $V_{\alpha_i}\cap  W$ is trivial for every $i$.  
    Let $u=\sum u_i$ be a non-trivial vector in $W$ with a minimal amount of wights $u_i$.
    Let $\beta \in T_*$ be a co-character, such that $\alpha_i(\beta (k))=k^{m_i}$.
    Choose $\{\lambda_n\} \in k$ with $ \lambda_n \to \infty$ and define $t_n=\beta(1+\lambda_n^{-1})$.
    For any $\gamma\in k$ we get that 
    $$
   \lim_n  [{t_n}, (\gamma\lambda_n u)]
   =
    \lim_n \gamma \sum_i   \lambda_n((1+\lambda_n^{-1})^{m_i}-1) u_i
    =
    \gamma\sum_i m_i u_i= u'$$
    And since $u\in W $ we get that for any $n$, $ \lambda_n u \in F_s(R)$ so $s'=\lim_n \lambda_n u$ satisfies $ss'=s$.
    Hence (by joint continuity at the point $(s',u')$ and points $(s',e), (e,s')$).
    $$
    s'u'=
    \lim_n  (\lambda_n u)\cdot [{t_n}, \lambda_n u] = 
     \lim_n  [{t_n}, (\lambda_n u)] \cdot  (\lambda_n u) =
     \lim_n  {t_n}\cdot (\lambda_n u)] \cdot {t_n}^{-1}
    =
    es'e
    =
    s'
    $$ 
    Finally, we get that
    $$
    s= ss'=ss'u'=su'
    $$
    Making $\sum_i m_i u_i \in W$ and then by choosing a non-trivial $m_{i_0}$ we get that $\sum_i m_i u_i - m_{i_0}u \in W$ with less amount of weights.
\end{proof}
Invariant under a maximal split torus is not enough to conclude the normality of a subgroup.
In the following subsection, we restrict to the case where $G$ is solvable-by-compact, for these groups there is a unique $k$-split maximal torus and we will obtain a normal subgroup in $F_R(s)$. 
%We will c prove the general by induction and observing that any minimal parabolic subgroup is solvable-by-compact.

\subsection{Compact by solvable}

\begin{definition}
   Let $k$ be a local field of characteristic zero and $\bfG$ a connected $k$-algebraic group.
    Let $\bfG=\bfL\ltimes \Rad{u}{\bfG}$ a Levi decomposition defined over $k$, and let $\bfT$ be a maximal $k$-split torus.
    We say that $G = \bfG(k)$ is \emph{algebraic compact by solvable} if $L=\bfL(k)$ is contained in the centralizer of $T=\bfT(k)$. I.e. if for any $l\in L$ and $t\in T$, $[l,t]=e$ .
\end{definition}

Note that in an algebraic compact by solvable group $G$, $T$ is a unique maximal $k$-split torus, and, $L/T$ is compact \cite[Ch I, Proposition 2.3.6]{margulis1991discrete}. So in particular $L$ consists of semi-simple elements.
%5555 not done
\begin{lemma}
\label{lem : tehre is C inv vector}
    Let $G$ be algebraic compact by solvable with Levi part $L$, and maximal $k$-split torus $T$ and let $s\in w(G)$. 
    Assume that $  \mathrm{F}_R(s)$ is non-trivial. 
    Then there exists a nontrivial $L$-invariant $k$-vector subspace in $ {F}_R(s)\cap Z(\Rad{u}{G})$. 
    %Moreover, we may assume that the invariant subspace is contained in a weight space of the torus action on $Z(U)$.
\end{lemma}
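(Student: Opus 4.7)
The plan is to use Lemma~\ref{lem : tehre is T inv vector} to obtain a nonzero $v \in \mathrm{F}_R(s) \cap V_\alpha$ in some $T$-weight space $V_\alpha \subseteq Z(U)$, and then to extend this to an $L$-invariant subspace by exploiting the compact-by-solvable structure. Since $L$ centralizes $T$, each weight space is $L$-stable, and $T$ acts on $V_\alpha$ via the scalar $\alpha$. The compact-by-solvable hypothesis ensures $L/T$ is compact; concretely, $L = T \cdot K$ where $K$ is a compact subgroup commuting with $T$ (for instance, $K = T_{\mathrm{an}} \cdot [L,L]$, with $T_{\mathrm{an}} \subseteq Z(L)^{0}$ the anisotropic part and $[L,L]$ anisotropic semisimple, both compact over a local field of characteristic zero).

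The natural candidate for the $L$-invariant subspace is
\[
W := \mathrm{span}_k\{\Ad_k(v) : k \in K\} \subseteq V_\alpha,
\]
which is $K$-invariant by construction and $T$-invariant via the scalar action, hence $L$-invariant. Establishing $W \subseteq \mathrm{F}_R(s)$ is the crux: since $\mathrm{F}_R(s)$ is a $k$-subspace containing $v$, this reduces to showing $\Ad_k(v) - v \in \mathrm{F}_R(s)$ for every $k \in K$, equivalently $s \cdot [k, \exp(\lambda v)] = s$ for all $\lambda \in k$. The group-theoretic identity $[k, \exp(\lambda v)] = \exp(\lambda(\Ad_k(v) - v))$, valid since $v \in Z(U)$ so $\Ad_k(v)$ and $v$ commute, transforms the question into one about fixation of this specific commutator by $s$.

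The principal obstacle is that $G$ is not yet known to be compactification-centric, so one cannot freely commute $s$ past $k$ in $w(G)$. The plan is to mirror the rescaling strategy from the proof of Lemma~\ref{lem : tehre is T inv vector}: pick $\lambda_n \to \infty$, set $u_n = \exp(\lambda_n v) \in F_R(s)$, extract subnets with $u_n \to s'$ and $u_n^{-1} \to \tilde s$ in $w(G)$ so that $s s' = s \tilde s = s$ by continuity of right multiplication along $F_R(s)$, and compute the limit of $[k, u_n] = (k u_n k^{-1}) u_n^{-1}$ using Ellis--Lawson joint continuity of $G \times w(G) \to w(G)$. A secondary rescaling analogous to the $\beta(1 + \lambda_n^{-1})$ trick should then isolate the commutator's contribution and yield $s\exp(\mu(\Ad_k(v) - v)) = s$ for every $\mu \in k$. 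The hardest step will be aligning the rescalings --- choosing $\lambda_n$ so that $k u_n k^{-1}$ and $u_n^{-1}$ have compatible convergent subnets whose $s$-fixation transfers through the commutator for arbitrary $\mu$ --- while observing that the case $\Ad_k(v) = v$ gives the claim for free.
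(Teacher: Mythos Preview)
Your plan has a genuine gap at precisely the step you flag as ``hardest.'' The rescaling trick from Lemma~\ref{lem : tehre is T inv vector} works because the torus elements $t_n=\beta(1+\lambda_n^{-1})$ tend to the identity: this is what makes $\lim_n t_n(\lambda_n u)t_n^{-1}=es'e=s'$ hold, and hence $s'u'=s'$. For a \emph{fixed} $k\in K$ with $\Ad_k(v)\ne v$, the coupling breaks: if $\lambda_n\to\infty$ then $[k,\exp(\lambda_n v)]=\exp(\lambda_n(\Ad_k(v)-v))$ diverges, while if you scale $\lambda_n$ to make this converge you lose $u_n\to\infty$ and with it the relation $ss'=s$. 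Taking instead $k_n\to e$ only yields directions $\lim_n \lambda_n(\Ad_{k_n}(v)-v)$, which do not recover $\Ad_k(v)-v$ for $k$ away from the identity; and the multiplicative bootstrap fails too, since $\Ad_{k_1k_2}(v)-v=\Ad_{k_1}(\Ad_{k_2}(v)-v)+(\Ad_{k_1}(v)-v)$ requires $\Ad_{k_1}$-invariance of $\mathrm{F}_R(s)$, which is what you are trying to prove. (In the non-archimedean case $K$ is totally disconnected, so it is not even generated by a neighbourhood of $e$.) A secondary issue: the splitting $L=T\cdot K$ with $K$ a compact \emph{subgroup} is not established by the hypothesis --- you only know $T$ is central in $L$ and $L/T$ is compact.

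The paper sidesteps this by arguing contrapositively through a maximality device. Take $W\subseteq V_\alpha$ \emph{maximal} among subspaces contained in $F_R(s)$, and study the stabilizer $L_W=\{l\in L: l.W\subseteq W\}$. Since $L$ centralizes $T$, $V_\alpha$ is $L$-stable; if $L_W$ is open then (containing $T$, with $L/T$ compact) it has finite index, and being Zariski closed in the connected group $L$ forces $L_W=L$, so $W$ is $L$-invariant. Otherwise there exist $l_n\to e$ with $l_n\notin L_W$, and now the rescaling trick \emph{does} apply: writing $l_n.w=w_n'+y_n$ with $y_n\to 0$ in a complement $Y$, one rescales by $\gamma_n=\|y_n\|^{-1}$ and uses joint continuity (with $\gamma_n w,\gamma_n w_n'\in W\subseteq F_R(s)$ and $l_n\to e$) to conclude $\langle y\rangle+W\subseteq F_R(s)$, contradicting maximality. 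The maximal-$W$ framing is exactly what reduces the problem to elements approaching the identity, which is the regime where your rescaling idea is valid.
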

\begin{proof}
    By Lemma \ref{lem : tehre is T inv vector} we may assume that there exists a weight space $V_{\alpha}\le Z(\Rad{u}{G})$ such that $ F_R(s)\cap V_{\alpha}$ contains a non-trival $T$-invariant $k$-vector subspace. We set $W$ to be a maximal subspace of $V_\alpha$ contained in $F_R(s)$.
  %  Let $Z(U)=\oplus_i V_{\alpha_i}$  a weight space decomposition for the torus action. Since $W$ is nontrivial and $T$-invariant we may assume that there exists $i_0$, with $W_0=W\cap V_{\alpha_{i_0}} $ not trivial. 
    Observe that since $L$ commutes with the $T$ action so for any $t\in T$,  $t.(l.v)=l.(t.v)=l.(\alpha(t)v)=\alpha(t)(l.v)$. Hence $L.V_{\alpha}\subset V_{\alpha}$.
    If $W=V_\alpha$ then $W$ would be $L$-invariant. 
    Else we have a nontrivial decomposition $V_{\alpha}=W + Y$ by choosing a basis ${w_i}\in W$ for $W$ and completing by elements  ${y_j}$ to get a basis for $V_\alpha$. So by the construction of $W$ for any vector $y\in Y$ there exists some $\gamma \in k$ with $s(\gamma u)\ne s$.

    If the subgroup $L_{W}=\set{ l\in L \ : \ l.W \subset W}$ is open then $W$ would be $L$-invariant. This is since $L_{W}$ contains $T$ and since $L/T$ is compact \cite[Ch I, Proposition 2.3.6]{margulis1991discrete} making $L_{W}$  finite index, but since the action is algebraic $L_{W}$ is a Zarsiki closed algebraic subgroup of finite index - thus it must be equal to $L$ by connectivity assumption.
    
    Hence we may assume there exists a sequence $l_n\to e$ such that $l_n\notin L_W$ for any $n$. 
    In particular, there exists $w\in W$ with $l_n.w\notin W_i$.
    (This is because we can choose a basis $\{w_1,...,w_m\}$ for $W$, and each $l_n$ must move at least one of the basis elements out of $W$, so we can use penguin hole principle and move to a sub-sequence to get a single vector moved out of $W$ for any $l_n$.)
    
    Let $l_n.w=w'_n+y_n$ with $w'_n\in W$ and $0\ne y_n\in Y$.
    So we get that $l_n.(w)\to w\in W$ hence $\lim_n y_n= \lim_n l_n.(w)-w'_n\in W$, making $\lim_n y_n= 0$.
    
    Let $\gamma_n\in k$ such that $\|\gamma_n\|=\|y_n\|^{-1}$, we get that 
    $ \|\gamma_n y_n \| = 1$  so we may assume the limit $\lim_n \gamma_n y_n  =y$ is non-trivial.
    Thus for any $\lambda \in k$
    $$
     \lim_n l_n.(\gamma_n \lambda w)-(\gamma_n \lambda w'_n) = \lim_n \lambda \gamma_n y_n = =\lambda y \in Y 
    $$
    Let $s_1= \lim_n  (\lambda\gamma_n w)$ and $s_2=\lim_n (\lambda\gamma_n w'_n) $, since $w'_n,w\in W\le F_R(s)$ we get that $ss_1=s$ and $ss_2=s$.
    Hence, by joint continuity
    $$
    s_2(\lambda y)=
    \lim_n (\lambda\gamma_n w'_n)+(c_n.(\lambda\gamma_n w)-\lambda\gamma_n w'_n)=
    \lim_n l_n (\lambda\gamma_n w) l_n^{-1}=es_1e=s_1 
    $$
    So $s(\lambda y)=ss_2u=ss_1=s$, making $\lambda y\in F_R(s)$ for any $\lambda\in k$. So $<y>+ W$ is in $F_R(s)$ in contradiction to the construction of $W$ as a maximal subspace.

\end{proof}
Finally, we get the existence of an algebraic invariant subgroup of fixed point
\begin{cor}
    \label{cor : if CBS and is fixed point then exist normal algebraic fixed point}
    Let $G$ be an algebraic compact by solvable group, and $s\in w(G)$.
    Assume that $  \mathrm{F}_R(s)$ is non-trivial.
   % with $\lim_n g_n=s$ and such that $\Ad_{g_n}$ does not converse in $\GL(\mathrm{U})$. T
    Then there exists a normal algebraic unipotent subgroup $N=\bfN(k)$ such that $N\le F_R(s)$.
\end{cor}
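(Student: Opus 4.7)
The plan is to apply Lemma~\ref{lem : tehre is C inv vector} and then promote its $L$-invariance to full $G$-invariance, using the elementary fact that $\Rad{u}{G}$ acts trivially on its own Lie-algebraic center; once the invariant subspace is produced, exponentiation yields the desired normal algebraic subgroup.

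Concretely, the lemma supplies a nontrivial $L$-invariant $k$-vector subspace $W$ contained in $F_R(s)\cap Z(\Rad{u}{G})$. I would view $W$ simultaneously as a Lie subalgebra of $\mathrm{U}$ and as a closed subgroup of $\Rad{u}{G}$ via the algebraic isomorphism $\Rad{u}{G}\cong \mathrm{U}$ recalled in the section preliminaries. To upgrade $L$-invariance to $G$-invariance, I would observe that for every $u\in \Rad{u}{G}$ and every $v\in Z(\mathrm{U})$ one has $\ad_{\log u}(v)=0$, whence $\Ad_u(v)=v$; combining this with the Levi decomposition $G=L\ltimes \Rad{u}{G}$, the subspace $W$ is invariant under the full adjoint action of $G$.

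I would then define $\bfN$ as the algebraic subgroup of $\Rad{u}{\bfG}$ with $\bfN(k)=\exp(W)$. Since $W\subset Z(\mathrm{U})$ is abelian, the exponential restricts to an algebraic group isomorphism between $(W,+)$ and $\bfN(k)$, so $N$ is a closed connected unipotent $k$-subgroup. The $G$-invariance of $W$ gives normality of $\bfN$ in $\bfG$, and $N\le F_R(s)$ follows from $W\subset \mathrm{F}_R(s)$ together with the containment $\exp(\mathrm{F}_R(s))\subset F_R(s)$ noted after the definition of Lie-fixed points.

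The real work has been carried out in the preceding lemmas, and I do not expect any serious obstacle at this stage: the corollary is a short packaging argument. The one conceptual point worth highlighting is the observation that $\Rad{u}{G}$ centralizes $Z(\mathrm{U})$, which is precisely what allows an $L$-invariant subspace inside the center to be bootstrapped into a genuinely $G$-normal subgroup rather than merely a Levi-invariant one.
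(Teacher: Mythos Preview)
Your argument is correct and follows exactly the paper's route: invoke Lemma~\ref{lem : tehre is C inv vector} to obtain an $L$-invariant $k$-subspace of $Z(\Rad{u}{G})\cap F_R(s)$, observe that such a subspace is automatically $G$-invariant, and note that a $k$-vector subspace is a $k$-algebraic unipotent group. The paper's proof compresses all of this into a single sentence and leaves implicit the point you spell out, namely that $\Rad{u}{G}$ centralizes its own center so that $L$-invariance plus the Levi decomposition yields $G$-invariance; your version is simply a more detailed rendering of the same argument.
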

\begin{proof}
    By Lemma \ref{lem : tehre is C inv vector} we get that $F_R(s)$ contains a $k$-vector subspace $N\le Z(U)$ which is $G$ invariant since a $k$-vector subspace is $k$-algebraic unipotent group.
\end{proof}

%Let $s\in w(G)$, with $\lim_n g_n=s$.We saw in Lemma  \ref{lem : F_R(s) non trivial} that if $\Ad_{g_n}$ does not converge in $\GL(\mathrm{U})$ then there exist a fixed point in $\mathrm{F}_R(s)$. 

\begin{lemma}
\label{lem : unipotent part centeral}
    Let $G$ be algebraic compact by solvable with Levi part $L$, and maximal $k$-split torus $T$.
    Let $g\in \ker(\Ad)$ such that $g=lu$ with $l\in L$ and $u\in \Rad{u}{G}$, then $u\in Z(\Rad{u}{G})$.
\end{lemma}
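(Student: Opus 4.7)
The plan is to leverage the Jordan decomposition inside $\GL(\mathrm{U})$. Since $\Ad$ is a group homomorphism, $g = lu \in \ker(\Ad)$ translates into the identity $\Ad_l \cdot \Ad_u = \mathrm{id}_{\mathrm{U}}$, equivalently $\Ad_l = \Ad_u^{-1}$ in $\GL(\mathrm{U})$. The goal will then be to show each of these is the identity.

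First I would observe that $\Ad_u$ is a unipotent element of $\GL(\mathrm{U})$. This follows because $u \in \Rad{u}{G}$ is unipotent, and in characteristic zero the formula $\Ad_u = \exp(\ad_{\log u})$ together with the nilpotence of $\ad_{\log u}$ on $\mathrm{U}$ forces $\Ad_u$ to be unipotent on $\mathrm{U}$; hence $\Ad_u^{-1}$ is unipotent as well. Second, I would argue that $\Ad_l$ is semisimple on $\mathrm{U}$. By the definition of algebraic compact by solvable, $L$ centralizes $T$, and as recorded just after that definition, the quotient $L/T$ is compact, so $L$ consists entirely of semisimple elements (since $T$ does, and compact extensions preserve semisimplicity in the algebraic sense). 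Therefore $\Ad_l$ is a semisimple element of $\GL(\mathrm{U})$.

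Combining these, $\Ad_l = \Ad_u^{-1}$ is simultaneously semisimple and unipotent, so $\Ad_l = \mathrm{id}$ and hence $\Ad_u = \mathrm{id}$. To conclude, I would invoke the formula $\log(\Ad_u(X)) = u \exp(X) u^{-1}$ recorded in the preliminaries: $\Ad_u = \mathrm{id}_{\mathrm{U}}$ becomes $u \exp(X) u^{-1} = \exp(X)$ for every $X \in \mathrm{U}$, and since $\exp \colon \mathrm{U} \to \Rad{u}{G}$ is a bijection this says exactly that $u$ centralizes $\Rad{u}{G}$, i.e., $u \in Z(\Rad{u}{G})$.

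The main obstacle is the semisimplicity of $\Ad_l$, which really uses the compact-by-solvable hypothesis rather than the general structure of a Levi decomposition: an arbitrary Levi subgroup may contain non-semisimple (even unipotent) elements, so without controlling $L$ via $T$ the Jordan-decomposition argument would collapse. Everything else is a formal consequence of the uniqueness of Jordan decomposition in $\GL(\mathrm{U})$ together with the characteristic zero $\exp/\log$ correspondence.
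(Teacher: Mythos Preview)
Your argument is correct and rests on the same two inputs the paper uses---that $l$ is semisimple (because in the compact-by-solvable case $L$ consists of semisimple elements) and that $u$ is unipotent---but you deploy them more economically. The paper first observes that $g\in\ker(\Ad)$ forces $gug^{-1}=u$, hence $[l,u]=e$, so that $g=lu$ is the Jordan decomposition of $g$ in $G$; it then invokes the fact that $\ker(\Ad)$ is an algebraic subgroup and that Jordan components stay inside algebraic subgroups to conclude $u\in\ker(\Ad)$. You instead push everything into $\GL(\mathrm{U})$ via $\Ad$ and use only the elementary fact that an element which is simultaneously semisimple and unipotent must be the identity, so no preliminary commutation computation and no appeal to preservation of Jordan decomposition in subgroups is needed. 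Both routes land on $\Ad_u=\mathrm{id}_{\mathrm{U}}$ and finish via the $\exp/\log$ correspondence; yours is a slight streamlining of the same idea.
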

\begin{proof}
    Let $g=lu\in\ker(\Ad)$ as in the statement.
    Observe that $gug^{-1}=u$ thus $[l,u]=e$.
    Since $l$ is semi-simple and $u$ is unipotent we get that $g=lu$ is the Jordan decomposition of $g$. The $\Ad$ is an algebraic morphism, $\ker(\Ad)$ is an algebraic subgroup \cite[Proposition 2.2]{MR3668057}). By preservation of the Jordan decomposition for algebraic subgroups, we get that $u\in \ker(\Ad)$ as needed.
\end{proof}
\begin{lemma}
    \label{lem : in kernal and moved}
    Let $G$ be an algebraic compact by solvable group, and $s\in \overline {\ker{\Ad}}\le w(G)$. Then either $s\in \overline{Z(G)}$ or $\mathrm{F}_R(s)$ is non trivial.
\end{lemma}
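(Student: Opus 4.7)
Write $s = \lim_n g_n$ for a net $g_n \in \ker(\Ad)$. By Jordan decomposition combined with Lemma~\ref{lem : unipotent part centeral} (and the fact that $\ker(\Ad)$ is an algebraic subgroup, hence preserved by the Jordan components), factor $g_n = l_n u_n$ with $l_n \in L \cap \ker(\Ad)$ semisimple and $u_n \in Z(\Rad{u}{G})$ unipotent and central in $\Rad{u}{G}$. The strategy is to split into cases according to whether the sequences $\{l_n\}$ in $L$ and $\{u_n\}$ in the $k$-vector space $Z(\Rad{u}{G}) \simeq \Lie(Z(\Rad{u}{G}))$ remain bounded; when either diverges, I adapt the argument of Lemma~\ref{lem : F_R(s) non trivial} to produce a nontrivial element of $\mathrm{F}_R(s)$, and when both are bounded I identify $s$ with a limit of central elements.

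If $u_n$ diverges in $Z(\Rad{u}{G})$, I would use conjugation by the split torus. Since $L$ centralizes $T$, one has $t g_n t^{-1} = l_n \Ad_t(u_n)$ for every $t \in T$, so $T$ interacts cleanly with the decomposition. Passing to a $T$-weight subspace on which $u_n$ contributes nontrivially, choose $\lambda_n \to 0$ in $k$ with $\lambda_n u_n \to v \neq 0$. Then, mimicking the Ellis--Lawson calculation of Lemma~\ref{lem : F_R(s) non trivial} but with the $T$-action replacing the adjoint action of the diverging sequence, one shows that $s \cdot \exp(\alpha v) = s$ for every $\alpha \in k$, which places $v$ in $\mathrm{F}_R(s)$. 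If instead $l_n$ diverges in $L \cap \ker(\Ad)$, I use that the adjoint action of $L$ on $\Lie(L)$ is bounded (since $L/T$ is compact and $T$ is central in $L$, so the action factors through a compact quotient). A parallel rescaling argument, applied to a divergent direction in the $L$-factor and then transferred via Ellis--Lawson joint continuity, again extracts a nontrivial element of $\mathrm{F}_R(s)$.

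If both $\{l_n\}$ and $\{u_n\}$ are bounded, pass to convergent subsequences to obtain $s = lu \in G \cap \ker(\Ad)$, with $l$ centralizing $\Rad{u}{G}$ and $T$ and $u$ central in $\Rad{u}{G}$. I then aim to exhibit $s$ as a WAP-limit of elements of $Z(G)$: decomposing $u$ along $T$-weight spaces in $Z(\Rad{u}{G})$ and using the compactness of $L/T$, the nonzero-weight components can be absorbed by conjugating along diverging one-parameter subgroups of $T$ (driving weight-space components to zero in the compactification), leaving a residue that is $(T,L)$-invariant and hence central in $G$.

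\textbf{Main obstacle.} The subtlest case is the last one, where $s$ lies inside $G$ and $F_R(s) = \{e\}$ is automatic, so the entire burden falls on exhibiting $s$ in $\overline{Z(G)}$ via an intrinsic WAP-approximation argument inside the compact-by-solvable structure. The divergent cases are essentially reworkings of the rescaling pattern of Lemma~\ref{lem : F_R(s) non trivial}, with the now-trivial adjoint action of $g_n$ replaced by the nontrivial actions of $T$ on $Z(\Rad{u}{G})$ and of $L$ on $\Lie(L)$; once those tools are set up, joint continuity does the rest.
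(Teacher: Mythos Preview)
Your case analysis hinges on the $k$-\emph{split} torus $T$, and this is where the plan breaks down. In the divergent-$u_n$ case you want to rescale $u_n$ via the $T$-action on $Z(\Rad{u}{G})$, but two things can go wrong: the split torus $T$ may be trivial (the fully anisotropic case), and even when $T\neq\{e\}$ the diverging direction of $u_n$ may lie in the zero $T$-weight space, where conjugation by $T$ does nothing. Moreover, even on a nonzero weight space, contracting $u_n$ by $T$ forces $t_n\to\infty$ in $T$; you then lose the crucial ingredient $t_n\to e$ that makes the Ellis--Lawson computation $\lim_n t_n^{-1}g_n t_n = s$ go through. With $g_n\in\ker(\Ad)$ there is no contraction coming from $g_n$ itself, so your ``mimicking Lemma~\ref{lem : F_R(s) non trivial}'' would only yield $\exp(\alpha v)\,s = s\,\exp(\alpha v)$, not $s\,\exp(\alpha v)=s$.

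The paper's proof avoids this by working with a full maximal $k$-torus $T_0$ (anisotropic part included). On each $T_0$-isotypic piece $V_i$ one has, via Schur, a finite extension $K_i/k$ on which $T_0$ acts by multiplication by scalars of absolute value~$1$. Choosing $t\in T_0$ close to $e$ with $t|_{V_i}=1+x_i$, $0<\|x_i\|<p^{-1}$, and setting $t_n=t^{p^{m(n)}}$ with $m(n)$ tuned to the size of $u_n$, one gets simultaneously $t_n\to e$ and $[g_n^{-1},t_n]$ bounded away from~$0$; joint continuity then gives $s\cdot v_r=s$ for a nonzero $v_r$, and varying the scale $r$ produces an unbounded family in $F_R(s)\cap Z(U)$, hence a $k$-line in $\mathrm{F}_R(s)$. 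This non-Archimedean ``small rotation raised to a large $p$-power'' estimate is the genuine content of the lemma and has no analogue in your split-torus scheme.

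Separately, your ``both bounded'' case is not salvageable as written: if $s=lu\in G$ then $F_R(s)=\{e\}$ automatically, and one cannot force $s\in\overline{Z(G)}$ by ``driving weight components to zero''---$s$ is a fixed element of $G$, and $\overline{Z(G)}\cap G=Z(G)$. (In the paper's applications this case is harmless because $s\in G$ already gives $sG=Gs$, but the mechanism you describe does not prove the stated dichotomy.)
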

\begin{proof}
    Assume that $s\notin  \overline{Z(G)} $ and let $g_n\in \ker{\Ad} $ such that $\lim_n g_n =s$. 
    There exists $T_0$, a maximal $k$-torus in $G$ (not necessarily split), such that for infinitely many $n$, $[g_n^{-1}, T_0,]\ne e$. Since else, $g_n^{-1}$ and hence $g_n$ would eventually commute with the maximal tori $T_0$ which contains the unique maximal $k$-split torus $T$ and thus commute with the Levi subgroup $L\le G$, and as $g_n\in \ker(\Ad)$, it commutes with $\Rad{u}{G}$ as well. 
    Thus by moving to a sub-sequence, we can assume without loss of generality that for any $n$, $[g_n^{-1},T_0]\ne e$.
    By Lemma \ref{lem : unipotent part centeral} , $g_n=l_n u_n$ with $l_n\in L$ and $u_n\in Z(\Rad{u}{G})$. 
    Let $Z(\Rad{u}{G})=V^{T_0}\oplus_i V_{i}$ a decomposition into $T_0$ irreducible representations,
    %(YES- ALSO NOT SPLIT TORUS HAVE A DECOMPOSITION), 
    with $V^{T_0}$ the subspace of $T_0$ invariant vectors.
    Write $u_n=v(n)_{T_0}+\sum_i v(n)_{i}$ with $v(n)_{T_0}\in V^{T_0}$ and $v(n)_{i}\in V_i$.
    By Shcur's Lemma \cite[Lemma 6.1]{bader2023homomorphic} the vector space $V_i$ has the structure of a finite field extension $K_i$ over $k$, and the action by $T_0$ is by scalar multiplication.
    We separate our analysis into two cases.
    
    First, we assume that $k$ in non-archimedean, i.e. $k$ is a finite extension of $\bbQ_P$, and with $\|p\|=p^{-1}$.
    For any $t\in T_0$ and $w_i\in V_i$, $t.w_i=(1+x_i)\cdot w_i$ where the multiplication is in $K_i$. Following \cite[Proposition 8.18]{borel2012linear} We can take $t\in T_0$ to be close to the identity, letting $0 \ne x_i$ close to zero, and in particular, we choose $t\in T_0$ such that for any $i$,there exist $a\in \bbR$ such that $ 0<a<\| x_i \| < p^{-1} $.  
    In this case observe that for any integer $m\in \bbZ$, $\|(1+x_i)^m-1\| = \|m x_i\|$.
    By moving again to a sub-sequence we may assume that there exists $i_0$ such that $\|v(n)_i\|\le \|v(n)_{i_0}\|$.
    Let $0<r\in \bbR$, for any $n\in \bbN$ there exist $m(n)\in \bbZ$ such that 
    $p^{m(n)}\le r\cdot \|v(n)_{i_0} x_{i_0} \| < p^{m(n)+1}$.
    In particular, we get that
    $$
    b^{-1} r^{-1} p^{m(n)}\le \|v(n)_{i_0} \| < p^{m(n)+1} a^{-1} r^{-1}
    $$
    Thus by choosing $t_n=t^{p^{m(n)}}$, we get for any $i$ an upper bound
    $$
    \| [-v(n)_i,t_n]\|=\|p^{m(n)} x_i\|\cdot \|v(n)_i\| \le \|p^{m(n)} x_i\|  p^{m(n)+1} a^{-1} r^{-1} \le p a^{-1} r^{-1} b
    $$
    And for $i_0$ a lower bound
    $$
     a b^{-1} r^{-1} \le   \|p^{m(n)} x_{i_0}\|\cdot \|v(n)_{i_0}\| \| [-v(n)_{i_0},t_n,]\|
    $$
    Thus $% a b^{-1} r^{-1}\le 
    [g_n^{-1},t_n]=[-\sum_i v(n)_{i},t_n]
    %\le p a^{-1} r^{-1} b
    $ is bounded away from zero so by local compactness there exists a sub-sequence such that  $\lim_n  [g_n^{-1},t_n]=v_r$ and 
    $$a b^{-1} r^{-1}\le \|v_r\| \le p b a^{-1} r^{-1}$$
    Observe that $t_n\to e$ and that
    $$
    sv_r=\lim_n   g_n [g_n^{-1},t_n] = \lim_n  t_n^{-1} g_n t_n=ese=s 
    $$
    By changing $r$ we can see that the vector collection $v_r$ is unbounded. Thus the subgroup $F_R(s)\cap Z(U)$ contains an unbounded collection of elements, it must contain a $k$ subspace $W\le Z(\Rad{u}{G})$ \cite[Proposition 3.12]{bader2023homomorphic} and the Lie algebra of $W$ is in $\mathrm{F}_R(s)$.
\end{proof}
\begin{cor}
    \label{cor : Mautner for compact by sollvable}
    Let $G$ be an algebraic compact by solvable group. Then if the center of $G$ is compact,
    then the set $F_R(s)$ contains a non-trivial Zariski closed subgroup.
    In particular, $G$ has the Mautner phenomenon.
\end{cor}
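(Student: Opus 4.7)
The plan is to split on the behaviour of $\Ad_{g_n}$ along an approximating sequence $g_n \to s$, with the ultimate goal of producing a non-trivial element in $\mathrm{F}_R(s)$. Once that is in hand, Corollary~\ref{cor : if CBS and is fixed point then exist normal algebraic fixed point} automatically upgrades it to a non-trivial normal algebraic unipotent subgroup of $F_R(s)$, which is both Zariski closed and non-trivial; in particular the Mautner phenomenon then follows directly from Definition~\ref{def : Mautner}.

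In the first case, if $\Ad_{g_n}$ is unbounded in $\GL(\mathrm{U})$, Lemma~\ref{lem : F_R(s) non trivial} directly yields that $\mathrm{F}_R(s)$ is non-trivial.

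In the remaining case, $\Ad_{g_n}$ is bounded in $\GL(\mathrm{U})$, and since $\Ad(G)$ is Zariski closed in $\GL(\mathrm{U})$, a convergent subsequence limits to $\Ad(h)$ for some $h \in G$. I would then replace $s$ by $h^{-1}s = \lim h^{-1} g_n$; setting $g'_n = h^{-1} g_n$ we have $\Ad_{g'_n} \to e$, so using that $\Ad \colon G \to \Ad(G)$ is an open quotient map, one factors $g'_n = \epsilon_n k_n$ with $\epsilon_n \to e$ in $G$ and $k_n \in \ker(\Ad)$. Passing to a convergent subsequence of the $k_n$ in the compact space $w(G)$ and using joint continuity of multiplication on $G \times w(G)$, we obtain $h^{-1} s \in \overline{\ker(\Ad)}$. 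Lemma~\ref{lem : in kernal and moved} then gives that either $h^{-1} s \in \overline{Z(G)}$ or $\mathrm{F}_R(h^{-1} s)$ is non-trivial. The hypothesis that $Z(G)$ is compact makes $\overline{Z(G)} = Z(G) \subseteq G$, so the first alternative would force $s = h(h^{-1}s) \in G$, contradicting $s \in w(G) \setminus G$. Hence $\mathrm{F}_R(h^{-1}s)$ is non-trivial, and since left multiplication by the invertible $h \in G \subseteq w(G)$ is a bijection, $F_R(h^{-1}s) = F_R(s)$, and we conclude.

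The main technical point I expect to have to justify is the factorization $g'_n = \epsilon_n k_n$ in the second case, which amounts to $\Ad \colon G \to \Ad(G)$ admitting a local continuous section near the identity. Over a local field of characteristic zero this follows from standard structure theory of algebraic quotients, but it is the one step where more than the preceding lemmas is invoked; the rest of the argument is a routine assembly of Lemmas~\ref{lem : F_R(s) non trivial} and~\ref{lem : in kernal and moved} together with the compactness of $Z(G)$.
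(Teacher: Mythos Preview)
Your approach is correct and is in fact the argument the paper intends: the paper's own proof of this Corollary simply invokes Lemma~\ref{lem : in kernal and moved} and then Corollary~\ref{cor : if CBS and is fixed point then exist normal algebraic fixed point}, glossing over the verification of the hypothesis $s\in\overline{\ker(\Ad)}$; the case split you supply via Lemma~\ref{lem : F_R(s) non trivial} appears explicitly only in the proof of the next result, Theorem~\ref{thm : compactification-centric for CBS}. So you are filling in exactly the step the paper elides.

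One small correction: your dichotomy ``$\Ad_{g_n}$ unbounded'' versus ``bounded'' does not line up with the hypothesis of Lemma~\ref{lem : F_R(s) non trivial}, which is that $\Ad_{g_n}$ fails to converge \emph{in} $\GL(\mathrm{U})$. A bounded sequence in $\mathrm{End}(\mathrm{U})$ may subconverge to a non-invertible operator; in that situation your Case~2 step ``the limit equals $\Ad(h)$'' breaks down, while Lemma~\ref{lem : F_R(s) non trivial} (through Lemma~\ref{lem : if to infinity tehre is zero or infinite vector}) still applies. The clean fix is to phrase the split as: either $\Ad_{g_n}$ does not converge in $\GL(\mathrm{U})$ (Case~1), or, after passing to a subsequence, it converges in $\GL(\mathrm{U})$ to some $A$ (Case~2). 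Since $\Ad(G)$ is open, hence closed, in $\Ad(\mathbf{G})(k)\subset\GL(\mathrm{U})$ in characteristic zero, one then has $A\in\Ad(G)$ and your reduction to $h^{-1}s\in\overline{\ker(\Ad)}$ goes through unchanged. The local-section step you flag is indeed routine over local fields of characteristic zero.
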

\begin{proof}
    Let $s\in w(G)\setminus G$. Since the center of $G$ is compact, it cannot be that $s\in \overline{Z(G)}$ so by Lemma \ref{lem : in kernal and moved} $\mathrm{F}_R(s)$ is not trivial. And by Corollary   \ref{cor : if CBS and is fixed point then exist normal algebraic fixed point} $F_R(s)$ contain a Zariski closed subgroup.
\end{proof}

\begin{theorem}
    \label{thm : compactification-centric for CBS}
    Let $G$ be an algebraic compact by solvable group. Then $G$ is compactification-centric.
\end{theorem}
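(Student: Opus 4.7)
The plan is to induct on $\dim \mathbf{G}$, the base case $\dim \mathbf{G} = 0$ being trivial. Fix $s \in w(G)$; reducing to the non-trivial case, assume $s \notin G$ and pick a net $g_n \in G$ with $g_n \to s$. The argument then splits according to the behaviour of $\Ad_{g_n}$ inside $\GL(\mathrm{U})$.

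If $\Ad_{g_n}$ does not converge in $\GL(\mathrm{U})$, Lemma~\ref{lem : F_R(s) non trivial} makes $\mathrm{F}_R(s)$ non-trivial, and Corollary~\ref{cor : if CBS and is fixed point then exist normal algebraic fixed point} upgrades this to a non-trivial normal algebraic unipotent subgroup $N = \bfN(k) \trianglelefteq G$ contained in $F_R(s)$. The quotient $G/N$ inherits the algebraic compact by solvable structure (since $N \subseteq \Rad{u}{G}$, the Levi $L$ and the maximal $k$-split torus $T$ embed into $G/N$, and $L$ still centralises the image of $T$) and has strictly smaller dimension, so by induction $G/N$ is compactification-centric. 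With $sN = s$, Lemma~\ref{if fixed point normal then induce} then delivers $sG = Gs$.

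If instead $\Ad_{g_n}$ converges in $\GL(\mathrm{U})$ to some $A$, I would first argue that in the algebraic compact by solvable setting $\Ad(G)$ is closed in $\GL(\mathrm{U})$: the Levi $L$ is an almost-direct product of the closed split torus $T$ with a compact group (using that $L$ centralises $T$ and $L/T$ is compact via \cite[Ch I, Proposition 2.3.6]{margulis1991discrete}), while $\Ad(\Rad{u}{G})$ is a closed unipotent algebraic subgroup. Hence $A = \Ad_{g_0}$ for some $g_0 \in G$. Setting $s' := s g_0^{-1}$ and $M := M(\overline{\ker(\Ad)})$, one computes $s'M = \mathrm{id}$ in $w(G)/M \cong w(\Ad(G))$, so $s' \in M \subseteq \overline{\ker(\Ad)}$. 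Lemma~\ref{lem : in kernal and moved} then provides a dichotomy: either $s' \in \overline{Z(G)}$, in which case joint continuity of $S \times G \to S$ forces $s'$ to commute with every element of $G$ so that $s'G = Gs'$; or $\mathrm{F}_R(s')$ is non-trivial, and the first-case argument applied to $s'$ produces a normal algebraic unipotent $N' \trianglelefteq G$ with $s'N' = s'$ and $G/N'$ compactification-centric by induction, again yielding $s'G = Gs'$. Multiplying on the right by $g_0$ and using $Gg_0 = G$ finally recovers $sG = Gs$.

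The main obstacle I anticipate is exactly this convergent case: justifying that the limit $A$ lies in $\Ad(G)$ so that the reduction to Lemma~\ref{lem : in kernal and moved} is available. For general connected algebraic groups over local fields the image of $\Ad$ need not be closed in $\GL(\mathrm{U})$ (essentially the failure of sealedness discussed in \cite{bader2023homomorphic}), which would leave sequences whose limit lies outside $\Ad(G)$ unhandled by any of the preceding lemmas. The compact by solvable hypothesis is tailor-made to exclude this pathology, since it forces $\Ad(L)$ to split as a closed split torus commuting with a compact factor.
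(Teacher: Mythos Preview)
Your argument is correct and matches the paper's inductive scheme: locate a non-trivial normal algebraic unipotent $N \le F_R(s)$ via Lemma~\ref{lem : F_R(s) non trivial} or Lemma~\ref{lem : in kernal and moved}, then invoke Lemma~\ref{if fixed point normal then induce} together with the induction hypothesis on $G/N$. Your explicit reduction $s \mapsto s' = sg_0^{-1}$ in the convergent case, and the verification that $G/N$ is again algebraic compact by solvable, spell out details that the paper's terse (and typo-afflicted) case split leaves implicit.

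One remark on the obstacle you flag: it is not genuine. For any $k$-morphism of connected algebraic groups over a local field of characteristic zero, the image on $k$-points is open---hence closed---in the $k$-points of the algebraic image (by the implicit function theorem, or via the finite-index statement \cite[Theorem~6.14]{platonov1993algebraic} that the paper itself invokes later); thus $\Ad(G)$ is closed in $\GL(\mathrm{U})$ with no appeal to the compact-by-solvable hypothesis. The failures of sealedness discussed in \cite{bader2023homomorphic} concern arbitrary continuous homomorphisms into arbitrary topological targets, not $k$-algebraic morphisms, so they do not bear on this step.
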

\begin{proof}
    We prove by induction on the $k$-dimension of $G$.
    Let $s\in w(G)$, with $\lim_n g_n = s$. If $s\in \overline{Z(G)}$ then clearly $sG=Gs$. Else by Lemma \ref{lem : F_R(s) non trivial} $\mathbf{F}_R(s)$ (in the case $s\notin \ker(\Ad)$) or by Lemma     \ref{lem : in kernal and moved} ( in the case $s\notin \ker(\Ad)$) we get that $\mathbf{F}_R(s)$ is non trivial. So by Corollary \ref{cor : if CBS and is fixed point then exist normal algebraic fixed point}
    There exist a unipotent $k$-algebraic group $N=\bfN(k)$ which is normal in $G$ and contained in $F_R(s)$. Since $N$ in unipotent we get that $G/N=\bfG/\bfN (k)$ is algebraic $k$-group \cite[Corollary 15.7]{borel2012linear}, so by induction is commodification centric. Following Lemma \ref{if fixed point normal then induce}
 we get that $sG=Gs$ as needed. 
    \end{proof}
    
\subsection{The general case}
We can now establish the compactification-centric property for connected $k$ algebraic groups, as well as demonstrate the Mautner phenomenon (refer to Definition~\ref{def : Mautner}) for such groups with a compact center

\begin{theorem}\label{thm : mautner}
        Let $\bfG$ be a connected $k$-algebraic group. If $G=\bfG(k)$ is non-compact with a compact center, then the set $F_R(s)$ contains a non-trivial Zariski closed subgroup.
        In particular, $G$ has the Mautner phenomenon.
\end{theorem}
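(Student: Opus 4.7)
The plan is to reduce Theorem~\ref{thm : mautner} to the algebraic compact-by-solvable case already handled in Corollary~\ref{cor : Mautner for compact by sollvable}, via a minimal parabolic subgroup $P \le G$ and an Iwasawa-type decomposition $G = K \cdot P$ with $K \subseteq G$ compact. A minimal $k$-parabolic is algebraic compact by solvable in the paper's sense: its Levi takes the form $M \cdot T$, where $T$ is a maximal $k$-split torus and $M$ is its anisotropic centralizer, so $M$ commutes with $T$ by construction. Moreover, whenever $G$ is non-compact, $P$ is non-compact, since $P$ contains either the non-trivial $T$ (when $G$ has positive $k$-rank) or the non-trivial unipotent radical $\Rad{u}{G}$.

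Given $s \in w(G) \setminus G$ with $g_n \to s$, factor $g_n = k_n p_n$ with $k_n \in K$ and $p_n \in P$; passing to a subsequence, $k_n \to k \in G$. By joint continuity of multiplication on $G \times w(G)$ (Ellis--Lawson), $p_n = k_n^{-1} g_n \to k^{-1} s =: s' \in \overline{P} \subseteq w(G)$. Since left multiplication by $k \in G$ is a homeomorphism of $w(G)$, we get $F_R(s) = F_R(s')$, and $s' \notin P$ (else $s = k s' \in G$). By the universal property of the WAP compactification, the inclusion $P \hookrightarrow w(G)$ extends to a continuous semigroup map $\varphi \colon w(P) \to \overline{P}$; compactness of $w(P)$ allows us to extract a further subsequence with $p_n \to \tilde{s} \in w(P) \setminus P$, and by uniqueness of limits $\varphi(\tilde{s}) = s'$. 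Applying Corollary~\ref{cor : Mautner for compact by sollvable} to the compact-by-solvable group $P$ yields a non-trivial Zariski closed subgroup $N \le P$ with $\tilde{s} \cdot n = \tilde{s}$ for every $n \in N$. Projecting through $\varphi$ gives $s' \cdot n = s'$ in $w(G)$, and therefore $s \cdot n = k s' n = k s' = s$, so $N \subseteq F_R(s)$. Since $P$ is Zariski closed in $G$, so is $N$.

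The main technical hurdle is verifying that $Z(P)$ is compact, as required to invoke Corollary~\ref{cor : Mautner for compact by sollvable}. For reductive $G$, this is immediate since $Z(P) = Z(G)$ for minimal parabolics. In the non-reductive case, $Z(P)$ may acquire contributions from the unipotent radical, and one handles this either by induction on $\dim_k G$ after first passing to the reductive quotient $G/\Rad{u}{G}$ (using Lemma~\ref{lem : qutient of compactification-centric is compactification-centric} together with the identification $w(G)/\overline{\Rad{u}{G}} \cong w(G/\Rad{u}{G})$, and lifting back via Lemma~\ref{if fixed point normal then induce}), or by arguing directly from the Levi structure $P = (M \cdot T) \ltimes \Rad{u}{P}$ that any central element of $P$ not lying in $Z(G)$ is trapped in a compact subgroup under the hypothesis that $Z(G)$ is compact.
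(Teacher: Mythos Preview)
Your proposal is correct and follows essentially the same route as the paper: reduce to a minimal parabolic $P$ (which is algebraic compact by solvable), use co-compactness of $P$ in $G$ to write $s = g\sigma$ with $g \in G$ and $\sigma \in \overline{P} \setminus P$ (the paper invokes \cite[III.5.17]{ruppert2006compact} where you carry out the Iwasawa factorization by hand), and then apply Corollary~\ref{cor : Mautner for compact by sollvable} to $P$. The one place the paper is shorter is the compactness of $Z(P)$, which it dispatches by a direct citation to \cite[Lemma~3.8]{bader2023homomorphic} rather than the inductive workaround you sketch.
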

\begin{proof}
    Let $s\in w(G)$ and $P\le G$ be a minimal parabolic subgroup.
    Since $P$ is co-compact in $G$, by Theorem \cite[III.5.17]{ruppert2006compact} $s=g\sigma$ for $\sigma\in \overline{P}$.
    By structure of minimal parabolic subgroups \cite[Proposition 20.6]{borel2012linear} $P$ is algebraic compact by solvable, and by \cite[Lemma 3.8]{bader2023homomorphic} $Z(P)$ is compact, hence Corollary \ref{cor : Mautner for compact by sollvable} implies that $F_R(\sigma)$ is non-trivial.
    And we conclude by the fact that  $F_R(g\sigma) \subset F_R(\sigma)$. 
\end{proof}
\begin{theorem}
    \label{thm : compactification-centric}
        Let $\bfG$ be a connected $k$-algebraic group. Then the group $G=\bfG(k)$ is compactification-centric.
\end{theorem}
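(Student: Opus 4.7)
I would prove the theorem by induction on the $k$-dimension of $\bfG$. The compact case is trivial since then $w(G)=G$. For the inductive step, fix $s\in w(G)$; if $s\in G$ the equality $sG=Gs$ is immediate, so assume $s\in w(G)\setminus G$. The overall strategy is to find a closed normal $k$-algebraic subgroup $N\lhd G$ of positive $k$-dimension with $sN=s$ and then invoke Lemma~\ref{if fixed point normal then induce} together with the inductive hypothesis applied to $G/N$, which has strictly smaller $k$-dimension.

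First I would handle the case in which $Z(G)$ is non-compact, so that $Z(\bfG)$ has positive $k$-dimension. Let $Z_0=Z(\bfG)_0(k)$ be the $k$-points of its Zariski-connected component; this is a positive-dimensional closed central $k$-subgroup of $G$. Separate continuity of multiplication in $w(G)$, together with centrality of $Z_0$ in $G$, forces $\overline{Z_0}\subseteq w(G)$ and hence its minimal ideal $M$ to be central in $w(G)$. The inductive hypothesis gives that $G/Z_0$ is compactification-centric; via the identification $w(G)/M\cong w(G/Z_0)$ this translates to $sG\cdot M=Gs\cdot M$ in $w(G)$, and combined with the centrality of $M$ a careful bookkeeping on cosets forces $sG=Gs$.

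So I may assume $Z(G)$ is compact. Theorem~\ref{thm : mautner} then supplies a non-trivial Zariski-closed subgroup $H\le F_R(s)$. Inspecting the proof of that theorem, $H$ is produced from a minimal parabolic $P\le G$ as a $k$-vector subspace of $Z(\Rad{u}{P})$ invariant under the Levi $L_P$, so that $H$ is $k$-algebraic and normal in $P$. The plan is now to upgrade $H$ to a Zariski-closed subgroup $N$ that is normal in all of $G$ and still contained in $F_R(s)$; Lemma~\ref{if fixed point normal then induce} applied to $N$, together with the inductive hypothesis on $G/N$, will then close the induction.

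The hard step is this last upgrade. \emph{A priori}, $H$ is only $P$-normal, and the unipotent radical of a proper parabolic is not itself normal in $G$, so the naive normal closure of $H$ in $G$ need not remain inside $F_R(s)$. Plausible routes I would try are: (i) use the Bruhat decomposition to compare $H$ with analogous subgroups produced from conjugate minimal parabolics and intersect them inside $F_R(s)$ to extract a $G$-stable piece; (ii) pair the right-fixed construction with its symmetric left-fixed counterpart (cf.\ the remark after Lemma~\ref{lem : F_R(s) non trivial}) to obtain a two-sided invariant subspace; (iii) in the purely reductive setting $\Rad{u}{G}=1$, and in particular for simple factors, argue directly that every $s\in w(G)\setminus G$ behaves like a two-sided zero in $w(G)$, which would make compactification-centricity automatic.
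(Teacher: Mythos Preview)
Your inductive scheme has a genuine gap at exactly the point you flag as ``the hard step.'' The induction requires, for each $s\in w(G)\setminus G$, a closed normal $k$-algebraic subgroup $N\lhd G$ of positive dimension with $sN=s$, so that Lemma~\ref{if fixed point normal then induce} and the inductive hypothesis on $G/N$ apply. But when $\bfG$ is $k$-simple (e.g.\ $\mathrm{SL}_n$) there is \emph{no} proper normal $k$-subgroup of positive dimension whatsoever. The Zariski-closed $H$ produced by Theorem~\ref{thm : mautner} sits inside $Z(\Rad{u}{P})$ for a minimal parabolic $P$ and is only $P$-normal; no ``upgrade'' to a $G$-normal subgroup of positive dimension is possible, because none exists. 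Of your proposed routes, (iii) is essentially the Howe--Moore property for simple groups, which in this paper is \emph{derived from} compactification-centricity (Theorem~\ref{thm - HM irr faith} and its corollary), so invoking it here would be circular; (i) and (ii) are too vague to close the gap and in any case cannot manufacture a positive-dimensional normal subgroup in a simple group.

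The paper avoids this obstruction by \emph{not} running an induction on general $\bfG$. The inductive argument you sketch is carried out only inside the ``algebraic compact-by-solvable'' class (Theorem~\ref{thm : compactification-centric for CBS}), where Corollary~\ref{cor : if CBS and is fixed point then exist normal algebraic fixed point} does supply a normal $N$. For arbitrary $G$ the proof is direct and pointwise in $g$: given $g\in G$, choose a minimal parabolic $P\ni g$; co-compactness of $P$ together with \cite[III.5.17]{ruppert2006compact} gives $s=\sigma x=y\sigma$ with $\sigma\in\overline{P}\subset w(G)$ and $x,y\in G$. Since $P$ is compact-by-solvable, Theorem~\ref{thm : compactification-centric for CBS} applied to the compactification $\overline{P}$ of $P$ yields $g\sigma=\sigma p$ and $\sigma g=p'\sigma$ for some $p,p'\in P$. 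Then $gs=g\sigma x=\sigma px=s\,x^{-1}px\in sG$ and $sg=y\sigma g=yp'\sigma=yp'y^{-1}s\in Gs$. No global normal subgroup is ever needed, and your case split on whether $Z(G)$ is compact does not arise.
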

\begin{proof}
    Let $s\in w(G)$, 
    and $g\in G$.
    Let $P$ be a minimal parabolic containing $g$.
    Since $P$ is co-compact in $G$, by Theorem \cite[III.5.17]{ruppert2006compact} $s=\sigma x=y\sigma $ for some $x\in G$ and $\sigma\in \overline{P}$.
    By structure of minimal parabolic subgroups \cite[Proposition 20.6]{borel2012linear} $P$ is algebraic compact by solvable so Theorem \ref{thm : compactification-centric for CBS} implies that $g \sigma= \sigma p$ and $p'\sigma = \sigma g$ for some $p',p\in P$.
    Hence $gs= g\sigma x =\sigma p x= sx^{-1}px$, so $Gs\subset sG$.
    In the reverse direction $sg= y\sigma g = yp'\sigma = yp'y^{-1}s $, so $sG\subset Gs$. Hence $Gs=sG$ as needed.
\end{proof}

\section{Applications to harmonic analysis}
\label{Sec: applications}
%
\begin{comment}
    In Proposition~\ref{prop: HM implies sealed in intro}, we established that the Howe--Moore property implies a closed image property for groups mapped into locally compact targets. Extending this, we assert that if \( G \) is a \( k \)-algebraic group, its image is closed without assuming any conditions on the target group.
Recall that we call a group that exhibits the closed image property under any continuous homomorphism \emph{sealed}.

\begin{prop}
    \label{prop : HM implies sealed}
    Let $k$ be local field of characteristic zero. And $\bfG$ a connected $k$-algebraic groups. If $G$ has the Howe--Moore property then $G$ is sealed.
\end{prop}
\begin{proof}
    Assume by contradiction that $G$ is not sealed. By \cite[Theroem 8.1]{bader2023homomorphic} there exists a normal subgroup $N\lhd G$ such that the center of the quintet $G/N$ is not compact. Hence the center $Z(G/N)$ maps with a non-closed image onto its Bhor compactification and this map can be extended to a map from $G/N$ as in \cite[Lemma 4.4]{bader2023homomorphic}. Thus getting a map from $G$ into a locally compact group with a non-closed image, in contradiction to Proposition~\ref{prop: HM implies sealed in intro}.
\end{proof}
\end{comment}
In this section, we explore the reciprocal relationship concerning unitary representations of sealed algebraic groups. Extending the methodology developed by Mayer for connected groups \cite{MR1436849}, we show a decomposition theorem for the weakly almost periodic (WAP) compactification of sealed \(k\)-algebraic groups.
This decomposition will then serve to show the main theorem.
\begin{theorem}
    \label{thm: main theorem}
    Let \( k \) be a local field of characteristic zero, and \( \mathbf{G} \) a connected \( k \)-algebraic group, denoted by $G=\bfG(k)$. The following properties are equivalent:
    \begin{enumerate}
        \item The center of $G/N$ is compact, for any closed normal subgroup $N\lhd G$.
        \item $G$ is sealed.
        \item Any unitary representation of $G$ decomposes into a direct sum of finite-dimensional and mixing components (modulo the kernel).
        \item $G$ is Eberlin.
    \end{enumerate}  
\end{theorem}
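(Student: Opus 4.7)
The plan is to prove the cycle $(1) \Rightarrow (2) \Rightarrow (3) \Rightarrow (4) \Rightarrow (1)$, using the compactification-centric property (Theorem~\ref{thm : compactification-centric}) and the Mautner phenomenon (Theorem~\ref{thm : mautner}) established above, together with the Bader--Gelander characterization of sealedness in \cite[Theorem~8.1]{bader2023homomorphic}. That characterization says $G$ is sealed iff $Z(G/\Rad{u}{G})$ is compact and $G$ has no non-trivial $\Ad$-fixed vector in $\mathrm{U}$. To get $(1) \Rightarrow (2)$ I would apply (1) with $N = \Rad{u}{G}$ for the first condition and observe that a fixed vector $v \in \mathrm{U}$ yields the non-compact one-parameter subgroup $\{\exp(\lambda v) : \lambda \in k\} \subseteq Z(G)$, contradicting (1) applied to $N = \{e\}$.

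For the key step $(2) \Rightarrow (3)$, given a unitary representation $(\pi, \mathcal{H})$ I would decompose it into a direct integral $\pi \cong \int^\oplus \pi_x\, d\mu(x)$ of irreducibles \cite{dixmier1969algebres}. Each $\pi_x$ factors through $\overline{G}_x = G/\ker \pi_x$; I would verify that $\overline{G}_x$ inherits condition (1) through the structure of algebraic quotients (so its center is compact). If $\overline{G}_x$ is compact then $\pi_x$ is finite-dimensional; otherwise $\overline{G}_x$ is non-compact with compact center and is a quotient of a compactification-centric group, so by Theorem~\ref{thm : mautner} and Lemma~\ref{lem : qutient of compactification-centric is compactification-centric} it satisfies both hypotheses of Theorem~\ref{thm - HM irr faith}, making the irreducible faithful $\pi_x$ mixing. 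Regrouping fibres yields the desired decomposition, with mixing preserved along the integral by dominated convergence. An alternative avoiding direct integrals is to work directly with the enveloping semi-group $S_\pi = \overline{\pi(G)}^{wot}$ and the identity idempotent $j$ of its minimal ideal $M(S_\pi)$: by Proposition~\ref{prop:cor_of_compactification_centric}(3), $j$ is central in $S_\pi$, hence a self-adjoint orthogonal projection, giving $\mathcal{H} = j\mathcal{H} \oplus (1-j)\mathcal{H}$; on $j\mathcal{H}$ the image of $S_\pi$ is the compact group $M(S_\pi)$ and Peter--Weyl provides the finite-dimensional part, while on $(1-j)\mathcal{H}$ mixing follows from a Mautner argument modelled on the proof of Theorem~\ref{thm - HM irr faith}.

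The remaining implications are lighter. For $(3) \Rightarrow (4)$, using that $WAP(G)$ is the closed linear span of matrix coefficients of unitary representations, condition (3) exhibits each such coefficient as a sum of an almost periodic coefficient (in $AP(G) \subseteq E(G)$) and a coefficient of a mixing representation (in $B(G) \cap C_0(G) \subseteq E(G)$), giving $WAP(G) = E(G)$. For $(4) \Rightarrow (1)$, I would argue by contrapositive: if $Z(G/N)$ is non-compact for some closed normal $N$, Chou's theorem \cite{chou1982weakly} supplies $f \in WAP(Z(G/N)) \setminus E(Z(G/N))$; then lift $f$ to a WAP function on $G/N$ using the centrality of $Z(G/N)$ (so that conjugation-invariance is automatic) and pull back to $G$ to produce an element of $WAP(G) \setminus E(G)$. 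The main obstacle is $(2) \Rightarrow (3)$: under the direct-integral approach one must handle the measurability of the partition of fibres into the finite-dimensional and mixing classes and justify preservation of mixing under the integral; under the idempotent approach the difficulty shifts to showing $s(1-j) = 0$ for every $s \in S_\pi$ arising from a net in $G$ escaping to infinity, which requires iterating the Mautner argument beyond the irreducible setting treated in Theorem~\ref{thm - HM irr faith}. A secondary subtlety is the lift step in $(4) \Rightarrow (1)$, which must produce an honest WAP function on $G/N$ that genuinely fails to be Eberlein.
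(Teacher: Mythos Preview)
Your implication $(3)\Rightarrow(4)$ is circular. The closed linear span of matrix coefficients of unitary representations is, by definition, the Eberlein algebra $E(G)$, not $WAP(G)$; the equality $WAP(G)=E(G)$ is precisely property~(4). (Weakly almost periodic functions do arise as matrix coefficients of isometric representations on \emph{reflexive} Banach spaces, but condition~(3) gives you no control over those.) The paper does not attempt $(3)\Rightarrow(4)$ at all; it proves $(2)\Rightarrow(4)$ directly, and the argument is substantially deeper than anything in your outline. The engine is the Decomposition Theorem~\ref{thm : decomposition theorem}, asserting that for a sealed $G$ one has $w(G)=\bigsqcup_{s\in J(w(G))}Gs$, proved by induction on $\dim_k G$: the Mautner phenomenon produces a \emph{Zariski-closed} normal $N\le F_R(s)$, one passes to the algebraic quotient, and sealedness is used via Lemma~\ref{lem : maximal subgroup} to identify $Gs$ with the maximal subgroup at $s$. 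With this in hand, $(2)\Rightarrow(4)$ follows by a Stone--Weierstrass argument, again by induction on dimension (Proposition~\ref{prop: sealed iff eberlin 2->4}).

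Your sketches for $(2)\Rightarrow(3)$ are also short of the mark. In the direct-integral route, $G/\ker\pi_x$ need not be the $k$-points of a connected $k$-algebraic group, so Theorem~\ref{thm : mautner} does not apply to it; the paper never leaves the algebraic category precisely for this reason. In the single-idempotent route, splitting off only the minimal idempotent $j$ is not enough: on $(1-j)\mathcal{H}$ the enveloping semi-group can still contain many idempotents, each giving a different kernel, so the complement is not in general mixing modulo a single kernel. The paper again invokes the full Decomposition Theorem to index the pieces by \emph{all} idempotents of $S_\pi$ (Proposition~\ref{prop : mixing prop of compactification-centric sealed group 2->3}): for each $v\in J(S_\pi)$ one takes the orthogonal complement in $v\mathcal{H}$ of all $w\mathcal{H}$ with $w<v$, and only on these refined pieces does one obtain $S_{\pi_v}\subset\pi_v(G)\cup\{0\}$, whence either a compact image or mixing. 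Your $(1)\Leftrightarrow(2)$ and $(4)\Rightarrow(1)$ are essentially the paper's arguments.
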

\subsection{The decomposition theorem}

\begin{lemma}
\label{lem : maximal subgroup}
    Let $G$ be a sealed connected $k$-algebraic group  Let $s\in J(w(G))$ an idempotent, the subgroup $Gs$ is the maximal subgroup of $w(G)$ containing $s$ as the identity element.
\end{lemma}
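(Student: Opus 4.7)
By Theorem~\ref{thm : compactification-centric} the group $G$ is compactification-centric, and Proposition~\ref{prop:cor_of_compactification_centric}(3)--(4) then gives that the idempotent $s$ is central in $w(G)$ and that $Gs$ is a subgroup of $w(G)$ with identity $s$. In particular $Gs \subseteq H(s)$, where $H(s)$ denotes the maximal subgroup of $w(G)$ containing $s$ as identity; the content of the lemma is the reverse inclusion $H(s) \subseteq Gs$.

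The plan is to put the subspace topology on $H(s)$, verify that it becomes a topological group, apply sealedness of $G$ to see that $Gs$ is closed in $H(s)$, and then use density to force equality. Set $K \coloneqq w(G)s$. By continuity of right multiplication by $s$, together with centrality of $s$, the set $K$ is a compact semi-topological semigroup with identity $s$, equal to $\overline{Gs}$, and clearly $H(s) \subseteq K$. The step I expect to be the main obstacle is verifying that $H(s)$ is a topological group in the subspace topology, since maximal subgroups at idempotents of compact semi-topological semigroups do not carry a group topology automatically. The argument is to invoke the Ellis--Lawson theorem on the left action of $K$ on itself: multiplication on $K$ is jointly continuous at $(k_1, k_2)$ whenever $K k_2 = K$, and for $k_2 \in H(s)$ this is automatic, since any $k \in K$ factors as $k = (k k_2^{-1}) k_2$ with $k_2^{-1} \in H(s) \subseteq K$. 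This delivers joint continuity on all of $K \times H(s)$, and in particular on $H(s) \times H(s)$. Continuity of inversion on $H(s)$ then follows from a cluster-point argument in the compact space $K$: for $h_n \to h$ in $H(s)$, any cluster point $k \in K$ of $\{h_n^{-1}\}$ must satisfy $h k = s = k h$ by applying joint continuity at $(h_n, h_n^{-1})$ and $(h_n^{-1}, h_n)$, forcing $k = h^{-1}$.

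With $H(s)$ upgraded to a topological group, consider the map $\phi_s \colon G \to H(s)$ defined by $\phi_s(g) = gs$. This is a continuous group homomorphism, the homomorphism property relying on centrality of $s$, with image exactly $Gs$. Sealedness of $G$ therefore forces $Gs$ to be closed in $H(s)$. Conversely, density of $G$ in $w(G)$ combined with continuity of right multiplication by $s$ makes $Gs$ dense in $K$, and hence dense in the subspace $H(s) \subseteq K$. Being both closed and dense in $H(s)$, one concludes $Gs = H(s)$, as required.
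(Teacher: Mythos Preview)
Your proof is correct and follows essentially the same strategy as the paper: apply sealedness to the homomorphism $g\mapsto gs$ to force $Gs$ to be closed, then conclude from $H(s)\subseteq\overline{Gs}$. The only difference in packaging is that you explicitly verify via Ellis--Lawson that $H(s)$ is a Hausdorff topological group and then invoke sealedness for the map $G\to H(s)$, whereas the paper phrases the same step as ``$Gs$ is locally compact, hence $\overline{Gs}=Gs$'' and cites Bourbaki; your version is more explicit about why the target carries a group topology, but the underlying argument is the same.
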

\begin{proof}
    Denote by $H(s)$ the maximal subgroup containing $s$ as the neutral element. (This group exist and is equal to the uniiun of all such subgroups).
    And $H(s)\subset w(G)s=\overline{Gs}$.
    The map $G\to Gs$, $g\mapsto gs$ is a continuous homomorphism, so by the sealed property of $G$, $Gs$ is locally compact making $\overline{Gs}=Gs$ as needed \cite[II Proposition 8]{bourbaki2013general}.
\end{proof}

\begin{lemma}
\label{lem : decomposition of G iplies of co finite subgruop}
    Let $H\lhd G$ be a normal closed subgroup of finite index in a compactification-centric group $G$. 
    If $$w(G)=\disjointunion_{s\in J(w(G))} G s$$
    Then 
    $$w(H)=\disjointunion_{s\in J(w(H))} H s$$
\end{lemma}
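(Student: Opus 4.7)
The plan is to identify $w(H)$ with the closure $\overline{H}$ of $H$ inside $w(G)$ and then transport the hypothesized decomposition of $w(G)$ across that identification.

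First, since $H$ is normal of finite index, the quotient $G/H$ is a finite, hence compact, group, so the projection $G\to G/H$ extends by the universal property of the WAP compactification to a continuous semi-group morphism $\tilde{\pi}\colon w(G)\to G/H$. Choosing coset representatives $g_1=e,g_2,\ldots,g_n$, the sets $g_i\overline{H}$ are closed, they cover the dense subset $G$, and $\tilde\pi$ sends each into the single point $g_iH$; as they partition $G$ and their union is closed, one obtains $\tilde{\pi}^{-1}(g_iH)=g_i\overline{H}$, in particular $\overline{H}=\tilde{\pi}^{-1}(eH)$.

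Second, I would observe that every idempotent $s\in J(w(G))$ automatically lies in $\overline{H}$: the image $\tilde{\pi}(s)$ is an idempotent in the group $G/H$, hence equals the identity. Consequently $J(w(G))\subset\overline{H}$, and $J(\overline{H})=J(w(G))$.

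Third, I would show $w(H)\cong\overline{H}$ as semi-topological semi-groups. The inclusion $H\hookrightarrow\overline{H}$ has dense image in a compact semi-topological semi-group, so the universal property of $w(H)$ provides a surjective morphism $w(H)\twoheadrightarrow\overline{H}$. Injectivity amounts to the restriction map $WAP(G)\to WAP(H)$ being surjective: given $f\in WAP(H)$, define $\tilde{f}\in C_b(G)$ by extending $f$ by zero outside $H$ (continuous since the finite-index $H$ is open). The $G$-orbit of $\tilde f$ under translation decomposes, coset by coset, into $[G:H]$ many translated copies of the $H$-orbit of $f$, each weakly relatively compact in $C_b(G)$ by the WAP-ness of $f$; a finite union of weakly relatively compact sets is weakly relatively compact, so $\tilde f\in WAP(G)$.

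Finally, for $s\in J(w(G))=J(\overline{H})$, an element $gs\in Gs$ lies in $\overline{H}$ precisely when $\tilde\pi(gs)=gH\cdot\tilde\pi(s)=gH$ is trivial, i.e.\ when $g\in H$; hence $Gs\cap\overline{H}=Hs$. Intersecting the hypothesis $w(G)=\bigsqcup_{s\in J(w(G))}Gs$ with $\overline{H}$ gives $w(H)=\overline{H}=\bigsqcup_{s\in J(w(H))}Hs$, disjointness being inherited from $w(G)$. The main technical obstacle is the third step: extending WAP functions from $H$ to $G$ without spoiling weak relative compactness of their orbits, which works specifically because finite index produces only finitely many coset-translates that need to be controlled simultaneously.
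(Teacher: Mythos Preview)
Your argument is correct and follows the same overall skeleton as the paper's proof: identify $w(H)$ with the closure $\overline{H}\subset w(G)$, check that $J(w(G))=J(\overline{H})$, and then restrict the assumed decomposition. The difference lies in the tools. The paper obtains $J(\overline{H})=J(w(G))$ by citing Ruppert's result on co-compact subgroups, obtains $\overline{H}\cong w(H)$ and the openness of $\overline{H}$ in $w(G)$ from Ruppert's theorem on open subgroups, and then argues that $t=gv\in\overline{H}$ forces $g\in H$ via a limit-in-open-set argument; disjointness is re-proved using centrality of idempotents (which is where the compactification-centric hypothesis enters). Your route replaces all of this with the single device of the projection $\tilde{\pi}\colon w(G)\to G/H$: idempotents land in $\overline{H}$ because $G/H$ is a group, $Gs\cap\overline{H}=Hs$ follows from reading off $\tilde{\pi}$-fibers, and disjointness is simply inherited from the hypothesis on $w(G)$. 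Your identification $w(H)\cong\overline{H}$ via extension-by-zero of WAP functions is a direct proof of what the paper cites. The net effect is that your proof is more self-contained and, notably, never invokes the compactification-centric assumption; the paper's use of it (for centrality of idempotents in the disjointness step) is in fact redundant given that disjointness already holds in $w(G)$ by hypothesis.
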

\begin{proof}
 Let $\overline{H}\subset w(G)$ be the closure of $H$ in $w(G)$.
 
 Since $H$ is co-compact  \cite[Ch III, Proposition 5.17]{ruppert2006compact} implies that $J(\overline{H})=J(w(G))$, hence: 
 $$
 w(G)=\disjointunion_{s\in J(\overline{H})} G s
 $$
 %3.14 in {Weak Almost Periodic Functions on semi-groups} We have that W(G)|_H=W(H) for H open.
 Let $t \in \overline{H}$ with $t=gv$ for some idempotent $v\in J(\overline{H})$ with $h_n\in H$ such that: $\lim_n h_n=v$.
 
Since $H$ is open in $G$ by  \cite[Ch III, Theorem 3.7]{ruppert2006compact}, we get that $\overline{H}\cong w(H)$ and moreover $\overline{H}$ is open in $S$. 
So since $t=\lim_n g h_n$ it must be that $g h_n\in \overline{H}$ making $g\in H$ and $t\in Hv$ as needed.

Moreover, for $u\ne v\in J(\overline{H})$, $Hv\cap Hu=\emptyset$ since if $hv=u$ we get by centrality of $J(\overline{H})$ that $$
v=h^{-1}u=h^{-1} u^2 =h^{-1}  (h v)^2 =hv= u
$$
\end{proof} 

\begin{theorem}[Decomposition Theorem]
    \label{thm : decomposition theorem}
    Let $G$ be a sealed connected $k$-algebraic group, then the WAP compactification decomposes as: $$
    w(G)=\disjointunion_{s\in J(w(G))} G s
    $$
\end{theorem}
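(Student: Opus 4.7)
The plan is to induct on $\dim_k\bfG$. In the base case $\dim_k\bfG=0$ the group $G$ is trivial and the decomposition is vacuous. For the inductive step we fix $s\in w(G)$ and produce an idempotent $e\in J(w(G))$ and $g\in G$ with $s=ge$; disjointness of the union is the general fact that an element of a semi-group lies in at most one maximal subgroup, which applies here because by Lemma~\ref{lem : maximal subgroup} each $Ge$ is the maximal subgroup $H(e)$.

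If $s\in G$ we take $e=1_G$. Otherwise, the sealed hypothesis forces $Z(G)$ to be compact, so Theorem~\ref{thm : mautner} produces a non-trivial Zariski closed subgroup inside $F_R(s)$; inspecting its proof via minimal parabolics and the compact-by-solvable analysis yields a non-zero vector $X\in\mathrm{F}_R(s)\cap\Lie(\Rad{u}{G})$. Using the compactification-centric relation $sg=g's$ together with $g\exp(\lambda X)g^{-1}=\exp(\lambda\Ad(g)X)$ and the derived identity $s=g'sg^{-1}$, one checks that $\mathrm{F}_R(s)$ is $\Ad(G)$-invariant, so every $G$-conjugate of $\exp(X)$ lies in $F_R(s)$. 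The subgroup of $G$ generated by this irreducible algebraic family is then a Zariski closed $k$-algebraic unipotent subgroup $N\lhd G$, non-trivial and contained in $F_R(s)$.

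Pass to $\bar G=G/N$, which is sealed (sealedness passes to algebraic quotients by condition~(1) of the main theorem) and of strictly smaller $k$-dimension; by the induction hypothesis $w(\bar G)=\bigsqcup_{\bar e}\bar G\bar e$. Setting $M=M(\overline N)\subseteq w(G)$ we have $w(G)/M\cong w(\bar G)$, so the image $\bar s$ of $s$ satisfies $\bar s=\bar g\bar e$ for some idempotent $\bar e$. Lift $\bar e$ to an idempotent $e\in w(G)$ (idempotents lift through the projection onto a quotient by a closed ideal in any compact semi-topological semi-group, by producing an idempotent inside the closed sub-semi-group generated by any preimage) and $\bar g$ to $g\in G$, so that $sM=(ge)M$ as subsets of $w(G)$. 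Since $N\subseteq F_R(s)$ gives $sN=s$, continuity of right multiplication by $s$ forces $s\overline N=\{s\}$, and in particular $sM=\{s\}$; consequently $(ge)M=\{s\}$, and evaluating at the identity $e_M$ of the compact group $M$ gives $s=ge\cdot e_M$. Because idempotents of $w(G)$ are central by Proposition~\ref{prop:cor_of_compactification_centric}(3), $e$ and $e_M$ commute, so $ee_M$ is itself an idempotent and $s\in G(ee_M)$, completing the induction.

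The main obstacle is producing a \emph{$G$-normal} $k$-algebraic unipotent subgroup inside $F_R(s)$: Theorem~\ref{thm : mautner} only guarantees a subgroup normal inside a chosen minimal parabolic, so promoting it to $G$-normality requires combining the $\Ad(G)$-invariance of $\mathrm{F}_R(s)$ (which itself is a consequence of the compactification-centric property) with the algebraic-geometric fact that a $G$-invariant irreducible family of unipotent elements generates a $G$-normal Zariski closed subgroup. Once this is in place, the lifting and cancellation step is comparatively routine, provided one observes that the centrality of idempotents makes $e\cdot e_M$ automatically idempotent.
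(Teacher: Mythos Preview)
Your overall strategy coincides with the paper's: induct on $\dim_k\bfG$, use the Mautner phenomenon together with normality of $F_R(s)$ to locate a non-trivial normal $k$-algebraic $N\subseteq F_R(s)$, pass to $w(G)/M\cong w(G/N)$, and recover $s=g\,(e\,e_M)$ from $sM=\{s\}$ and centrality of idempotents. The idempotent-lifting step, the computation $(ge)M=\{s\}\Rightarrow s=ge\cdot e_M$, and the disjointness via Lemma~\ref{lem : maximal subgroup} are all correct and match the paper.

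There are, however, two genuine gaps in how you produce and use $N$. First, Theorem~\ref{thm : mautner} does \emph{not} hand you a vector in $\Lie(\Rad{u}{G})$: its proof writes $s=g\sigma$ with $\sigma\in\overline P$ for a minimal parabolic $P$ and then applies the compact-by-solvable analysis to $P$, so the resulting vector lies in $Z(\Rad{u}{P})$, which need not sit inside $\Rad{u}{G}$ (for reductive $\bfG$ one has $\Rad{u}{G}=\{e\}$ while $\Rad{u}{P}$ is large). Your subsequent claim that the group generated by the $G$-orbit of $\exp X$ is a unipotent $k$-algebraic subgroup therefore fails in general; a $G$-invariant family of unipotent elements can generate a non-unipotent group, and there is no reason the $k$-points of its Zariski closure remain inside $F_R(s)$. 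The paper sidesteps this by not insisting on unipotence: it uses only that $F_R(s)$ is normal (Proposition~\ref{prop:cor_of_compactification_centric}(2)) and contains a Zariski-closed subgroup, and extracts a Zariski-closed \emph{normal} $N=\bfN(k)\subseteq F_R(s)$ directly.

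Second, even granting such an $N$, you cannot apply the induction hypothesis to $\bar G=G/N$ as written: the statement is for groups of the form $\bfH(k)$ with $\bfH$ connected, and in general $G/N=\bfG(k)/\bfN(k)$ is only an open finite-index subgroup of $(\bfG/\bfN)(k)$. The paper applies the induction hypothesis to $(\bfG/\bfN)(k)$ and then invokes Lemma~\ref{lem : decomposition of G iplies of co finite subgruop} to transfer the decomposition down to the finite-index subgroup $G/N$; this passage is absent from your outline. (A side remark: the continuity you invoke for $s\overline N=\{s\}$ is that of the map $x\mapsto sx$, i.e.\ left multiplication by $s$, not right.)
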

\begin{proof}
    We prove by induction on the $k$-dimension of $G$.
    Let $s\in w(G)$, we will show that $s\in Gw$ for some $w\in J(w(G))$.
    Recall that sealed groups have compact center \cite[Theroem 8.1]{bader2023homomorphic}, by the Mautner phenomenon \ref{thm : mautner} $F_R(s)$ is non-trivial and in fact contains a Zariski closed subgroup, by the compactification-centric property $F_R(s)$ is normal, so we get that it contains a Zariski closed normal subgroup $N=\bfN(k)$.
    The quotient group $\bfG/\bfN (k)$ is a sealed connected $k$-algebraic group and by induction has the decomposition property above. By \cite[Theorem 6.14]{platonov1993algebraic} the subgroup $G/N$ has finite index in $\bfG/\bfN (k)$  hence Lemma \ref{lem : decomposition of G iplies of co finite subgruop} $G/N$ has the decomposition property as well, that is
    
    $$S/M=\disjointunion_{v \in J(S/M)} G/N \cdot v$$
    Where $M$ is the minimal ideal in $\overline{N}\subset w(G)$.
    Implying that there exist $m_1,m_2\in M$, such that $sm_1=gvm_2$ with $v\in J(w(G))$.
    Since $M$ is the minimal ideal of $\overline{N}$, it is a subgroup with a natural element $s\in J(w(G))$, hence by Lemma \ref{lem : maximal subgroup} we get that $M\subset Gs$.
    And since $M\subset \overline{N}\subset  \overline{F_R(s)}$ we get that $sM=s$. Obtaining
    $s=sm_1=gvg_1s=gg_1vs$ and $vs$ is an idempotent as needed.
    Finally observe that $Gw_1\cap Gw_2$ is empty since if $g_1w_1=g_2w_2$ we get that $g_1^{-1}g_2w_2=w_1=w_1^2=(g_1^{-1}g_2)^2w_2$ implying that $g_1w_2=g_2w_2=g_1w_1=$ making $w_1=w_2$.
\end{proof}

We can now show the relation (3) implies (2) from the main Theorem~\ref{thm: main theorem}.
\begin{prop}
    \label{prop : decomp implies sealed, 3 -> 2}
    Let $k$ be local field of characteristic zero. And $\bfG$ a connected $k$-algebraic groups. If any unitary representation of $G=\bfG(k)$ decomposes as a direct sum of finite-dimensional and mixing (modulo the kernel) components. Then $G$ is sealed
\end{prop}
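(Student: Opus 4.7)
The plan is to argue the contrapositive: assuming $G$ is not sealed, produce a unitary representation of $G$ that cannot be written as a direct sum of a finite-dimensional representation and a representation that is mixing modulo its kernel.

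First I would verify that the decomposition property is inherited by quotients. If $N \lhd G$ is a closed normal subgroup and $\sigma$ is a unitary representation of $G/N$, inflating $\sigma$ to a representation of $G$ and applying the hypothesis yields $\sigma = \sigma_f \oplus \sigma_m$; since $N$ acts trivially on the whole space, $N$ lies in the kernel of each summand, so the decomposition descends. Combined with the equivalence $(1) \Leftrightarrow (2)$ of Theorem~\ref{thm: main theorem}, this reduces the task to the single claim: if $H$ is a connected $k$-algebraic group with the decomposition property, then $Z(H)$ is compact.

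For this central claim, suppose for contradiction that $Z(H)$ is non-compact. Since $Z(H)$ is locally compact abelian and non-compact, its Pontryagin dual $\widehat{Z(H)}$ is non-discrete and supports a continuous (atomless) finite positive Borel measure $\mu$ whose Fourier transform $\hat{\mu}$ does not vanish at infinity on $Z(H)$ (e.g., a Riesz-product construction). The classical Wiener criterion --- that the Cesaro mean of $|\hat{\mu}|^2$ equals the sum of squared point masses of $\mu$ --- then shows that $\hat{\mu}$ has no non-zero almost periodic component. Let $\sigma$ be the spectral representation of $Z(H)$ on $L^2(\widehat{Z(H)}, \mu)$ and form $\pi = \mathrm{Ind}_{Z(H)}^H \sigma$. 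Because $Z(H)$ is central, a direct Mackey calculation gives $\pi|_{Z(H)} \cong \sigma \otimes \mathbf{1}_{L^2(H/Z(H))}$, so in particular $\hat{\mu}$ appears as a matrix coefficient of $\pi|_{Z(H)}$.

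Now apply the decomposition hypothesis to $\pi$, writing $\pi = \pi_f \oplus \pi_m$. Restricted to $Z(H)$, every matrix coefficient splits as a sum of one coming from $\pi_f|_{Z(H)}$ (almost periodic, being a matrix coefficient of a finite-dimensional representation) and one coming from $\pi_m|_{Z(H)}$ (which descends to a $C_0$-function on the quotient $Z(H)/(Z(H)\cap\ker\pi_m)$). Writing $\hat{\mu}$ in this form and invoking the Wiener criterion eliminates the almost periodic summand, forcing $\hat{\mu}$ to descend to a $C_0$-function on some quotient of $Z(H)$. A short case analysis on whether $Z(H)\cap\ker\pi_m$ is cocompact in $Z(H)$ or not then contradicts the chosen properties of $\mu$.

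The main obstacle is the last step: controlling the restriction of $\pi_m$ to the central subgroup. One must verify that $Z(H)\cdot \ker\pi_m$ is closed in $H$, so that the subspace and quotient topologies on $Z(H)/(Z(H)\cap\ker\pi_m)$ coincide and matrix coefficients restrict as $C_0$-functions. This is handled via the algebraic structure of the center (cf.\ \cite[Proposition 3.12]{bader2023homomorphic}) together with the Zariski-closed normal subgroups arising from Corollary~\ref{cor : Mautner for compact by sollvable}.
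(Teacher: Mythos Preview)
Your strategy diverges from the paper's, and the step you flag as ``the main obstacle'' is a genuine gap that the references you cite do not close. Corollary~\ref{cor : Mautner for compact by sollvable} applies only to algebraic compact-by-solvable groups with \emph{compact} center---the opposite of your running hypothesis on $H$---and it produces Zariski-closed subgroups inside $F_R(s)$ for $s\in w(G)\setminus G$, not inside the kernel of an arbitrary mixing summand $\pi_m$. The kernel $\ker\pi_m$ is merely a closed normal subgroup of $H$; nothing in the setup forces it to be Zariski-closed, and $H=G/N$ need not even be the $k$-points of a connected algebraic group since $N$ was an arbitrary closed normal subgroup. Proposition~3.12 of \cite{bader2023homomorphic} (about unbounded subgroups of $k$-vector spaces containing lines) does not bear on whether $Z(H)\ker\pi_m$ is closed. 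Even if one grants closedness, the case analysis is incomplete: when $Z(H)\cap\ker\pi_m$ is neither compact nor cocompact you would need $\hat\mu\notin C_0\bigl(Z(H)/(Z(H)\cap\ker\pi_m)\bigr)$, but you only arranged $\hat\mu\notin C_0(Z(H))$, and since $\ker\pi_m$ depends on $\mu$ through the induced representation, $\mu$ cannot be adjusted after the fact.

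The paper sidesteps all of this. After the same reduction to locally compact targets (via \cite[Theorem~8.1, Lemma~4.4]{bader2023homomorphic}), it takes an arbitrary $f\colon G\to H$ with $H$ locally compact, sets $\pi=\lambda_H\circ f$, and uses that the enveloping semigroup of the regular representation is $\lambda_H(H)\cup\{0\}$. Hence $S_\pi\subset\lambda_H(H)\cup\{0\}$, so $J(S_\pi)\subset\{e,0\}$; applying the decomposition hypothesis to $\pi$ then yields $S_\pi=\pi(G)\cup\{0\}$, so $\pi(G)$ is locally compact and therefore closed, and $f(G)=\lambda_H^{-1}(\pi(G))$ is closed. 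No information about $\ker\pi_m$ is ever needed.
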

\begin{proof}
    First, we claim that it is enough to show a close image considering only locally compact groups in the target.
    Assuming in contradiction that $G$ is not sealed, by \cite[Theroem 8.1]{bader2023homomorphic} there exists a normal subgroup $N\lhd G$ such that the center of the quintet $G/N$ is not compact. Hence the center $Z(G/N)$ maps with a non-closed image onto its Bhor compactification and this map can be extended to a map from $G/N$ as in \cite[Lemma 4.4]{bader2023homomorphic}. Thus getting a map from $G$ into a locally compact group with a non-closed.
    
    Let $H$ be a locally compact group, and \( f \) be a continuous homomorphism from \( G \) to \( H \).  
    Consider the regular representation \((\lambda_H, L^2(H))\) of the locally compact group \( H \). Define \(\pi \coloneqq \lambda_H \circ f\).
     We claim that $\overline{\pi(G)}^{wot}=\pi(G)\cup \emph{0}$.
    Observe that $$
    \overline{\pi(G)}^{wot}\subset \overline{\lambda_H(H)}^{wot}=\lambda_H(H)\cup \emph{0}
    $$
    So by the decomposition theorem $\overline{\pi(G)}^{wot}\subset \pi(G)\cdot e \cup \pi(G)\cdot \emph{0}$ and therefore $\overline{\pi(G)}^{wot}=\pi(G)\cup \emph{0}$, rendering \(\pi(G)\) a locally compact subgroup of \(\mathrm{U}(L^2(H))\) and thereby closed. Consequently, \(f(G) = \lambda_H^{-1}(\pi(G))\) is closed. 
\end{proof}

\subsection{Sealed Eberlin groups}
\label{subsec:sealed_and_eberlin}
    Let \( G \) be a topological group. The family of all matrix coefficients of \( G \) forms the Fourier–Stieltjes algebra of \( G \), denoted by \( B(G) \). The norm closure of \( B(G) \) is a \( C^* \)-algebra called the Eberlin algebra and denoted by \( E(G) \).

    Considering the universal unitary representation \( \pi_u \), the associated enveloping semi-groups are referred to as the Eberlin compactification, which corresponds to the maximal ideal space of the \( C^* \)-algebra \( E(G) \).
    
    Recall the notion of an Eberlin group, a topological group \( G \) is called \emph{Eberlin} if the Eberlin algebra \( E(G) \) equals \( WAP(G) \), or equivalently if the Eberlin compactification coincides with the weakly almost periodic one.

The objective of this subsection is to demonstrate that sealed \( k \)-algebraic groups precisely align with \( k \)-algebraic Eberlin groups, meaning that any weakly almost periodic function is a uniform limit of matrix coefficients.

\begin{lemma}
    \label{lem : eberlin qutient}
    Let $G$ be an Eberlin group, and $N\lhd G$ a closed normal subgroup. Then $G/N$ is Eberlin.
\end{lemma}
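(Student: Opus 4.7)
The plan is to transfer the problem from $G/N$ to $G$ via the quotient map $q\colon G \to G/N$, apply the Eberlin hypothesis there, and then descend the resulting uniform approximations back to $G/N$ by a Ryll--Nardzewski-style averaging over $N$. Let $f \in WAP(G/N)$ and set $\tilde f := f \circ q$. Weak almost periodicity is preserved under pullback along continuous homomorphisms, so $\tilde f \in WAP(G)$; by the Eberlin hypothesis $WAP(G) = E(G)$, hence $\tilde f$ is a uniform limit of matrix coefficients $\phi_n(g) = \langle \pi_n(g)\xi_n, \eta_n\rangle$ of unitary representations $(\pi_n, \mathcal{H}_n)$ of $G$.

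Since $N$ is normal, the subspace $\mathcal{H}_n^{\pi_n(N)}$ of $\pi_n(N)$-fixed vectors in $\mathcal{H}_n$ is $G$-invariant. Let $P_n$ denote the orthogonal projection onto this subspace and define $\phi_n^0(g) := \langle \pi_n(g) P_n\xi_n, P_n\eta_n \rangle$. Then $\phi_n^0$ is a matrix coefficient of the sub-representation on the $\pi_n(N)$-fixed part, which factors through $G/N$, and so descends to a matrix coefficient $\psi_n \in B(G/N)$ with $\phi_n^0 = \psi_n \circ q$. The goal reduces to showing $\phi_n^0 \to \tilde f$ uniformly on $G$, which will yield $\psi_n \to f$ uniformly and therefore $f \in E(G/N)$.

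For the final step, observe that by uniqueness of the minimal-norm element in a closed convex subset of a Hilbert space, the vector $P_n\xi_n$ (respectively $P_n\eta_n$) is the unique $\pi_n(N)$-fixed point of the norm-closed convex hull of the orbit $\pi_n(N)\xi_n$ (respectively $\pi_n(N)\eta_n$), and in particular is a norm-limit of finite convex combinations of orbit elements. Consequently $\phi_n^0(g)$ is a pointwise limit of averages of the form
$$
A_{F_1,F_2}(\phi_n)(g) := \frac{1}{|F_1|\,|F_2|}\sum_{(a,b)\in F_1\times F_2} \phi_n(a^{-1} g b),\qquad F_1, F_2 \subset N \text{ finite}.
$$
Since $\tilde f$ is $N$-bi-invariant, $A_{F_1,F_2}(\tilde f) = \tilde f$ identically, and so $|A_{F_1,F_2}(\phi_n)(g) - \tilde f(g)| \le \|\phi_n - \tilde f\|_\infty$ for every $g$. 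Passing to the pointwise limit yields $\|\phi_n^0 - \tilde f\|_\infty \le \|\phi_n - \tilde f\|_\infty \to 0$, completing the argument. The principal point of care is the Ryll--Nardzewski-style identification of $P_n\xi_n$ with a limit of convex averages of orbit vectors; once this is in hand, the averaging bound closes the proof mechanically.
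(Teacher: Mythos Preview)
Your argument is correct and takes a genuinely different route from the paper. The paper's proof is a two-line citation: it invokes Chou's theorem that $\rho^t(E(G/N)) = \rho^t(C(G/N)) \cap E(G)$ and the de~Leeuw--Glicksberg fact that $\rho^t(WAP(G/N)) = \rho^t(C(G/N)) \cap WAP(G)$, and then observes that the Eberlin hypothesis $E(G)=WAP(G)$ forces the two pulled-back algebras to coincide. Your proof instead unwinds the relevant half of Chou's theorem by hand: pull back $f$ to $\tilde f$, approximate by matrix coefficients $\phi_n$, project the representing vectors onto the $N$-fixed subspace, and use a Ryll--Nardzewski argument to see that the projected coefficients $\phi_n^0$ still converge uniformly to $\tilde f$. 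This is more elementary and self-contained; the paper's version is shorter but relies on the black boxes.

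One small imprecision: the averages $A_{F_1,F_2}$ you write are \emph{uniform} averages over finite subsets of $N$, but what the Ryll--Nardzewski argument actually delivers is that $P_n\xi_n$ and $P_n\eta_n$ lie in the norm-closed \emph{convex} hull of the respective orbits, so $\phi_n^0$ is a pointwise limit of general convex combinations $\sum_{i,j}c_i d_j\,\phi_n(a_i^{-1}gb_j)$ rather than uniform ones. This does not affect the estimate---any convex combination of bi-$N$-translates fixes $\tilde f$ and is a contraction in sup-norm---so the bound $\|\phi_n^0-\tilde f\|_\infty\le\|\phi_n-\tilde f\|_\infty$ goes through unchanged. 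Alternatively, allow multisets in $F_1,F_2$ and the uniform formulation becomes literally correct.
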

\begin{proof}
    Let $\rho\colon G \to G/N$ be the quotient map and $\rho^t\colon C(G/N) \to C(G)$ be the transposed map.
    By \cite[Theorem 1]{chou1979uniform}:
    $$
    \rho^t(E(G/N))= \rho^t(C(G/N)) \cap E(G)
    $$
    And by \cite[Lemma 5.12]{MR0131784}:
    $$
    \rho^t(WAP(G/N))=\rho^t(C(G/N)) \cap WAP(G)
    $$
    Since $WAP(G)=E(G)$ we get that
    $
     \rho^t(E(G/N))= \rho^t(WAP(G/N))
    $
    , hence $G/N$ is Eberlin.
\end{proof}

\begin{prop}
\label{prop : Enerlin ins sealed 4->2}
    Let $G$ be a connected $k$-algebraic group, then if $G$ is Eberlin $G$ is sealed.
\end{prop}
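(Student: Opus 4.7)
The plan is to mirror the proof of Proposition~\ref{prop : decomp implies sealed, 3 -> 2}, with the Eberlin hypothesis in the role of the decomposition hypothesis (3). Suppose for contradiction that $G$ is Eberlin and not sealed. By \cite[Theorem 8.1]{bader2023homomorphic} there is a closed normal subgroup $N \lhd G$ with $Z(G/N)$ non-compact; by Lemma~\ref{lem : eberlin qutient} the quotient $G/N$ is again Eberlin, so after replacing $G$ by $G/N$ I may assume $Z(G)$ itself is non-compact. The canonical map $Z(G) \to bZ(G)$ to the Bohr compactification is then non-closed, and via \cite[Lemma 4.4]{bader2023homomorphic} I would extend it to a continuous homomorphism $f \colon G \to H$ into a locally compact group, with $f(G)$ non-closed in $H$.

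I would then form the unitary representation $\pi := \lambda_H \circ f$ on $L^2(H)$ and observe, exactly as in the sibling proposition, that $\overline{\pi(G)}^{wot} \subseteq \overline{\lambda_H(H)}^{wot} = \lambda_H(H) \cup \{\mathbf{0}\}$. The target is to sharpen this to the equality $\overline{\pi(G)}^{wot} = \pi(G) \cup \{\mathbf{0}\}$: once this is in hand, $\pi(G)$ is a locally compact subgroup of $\mathrm{U}(L^2(H))$ and hence closed, forcing $f(G) = \lambda_H^{-1}(\pi(G))$ to be closed in $H$, contradicting the construction of $f$.

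To obtain the sharpened equality I cannot apply Theorem~\ref{thm : decomposition theorem} verbatim, because it assumes sealed. My idea is to use the Eberlin hypothesis to identify $w(G)$ with the Eberlin compactification $\overline{\pi_u(G)}^{wot}$, realising $w(G)$ as a compact semi-topological semi-group of contractions on the universal Hilbert space; by Proposition~\ref{prop:universal_property_WAP} the representation $\pi$ then lifts to a continuous surjection $\tilde\pi \colon w(G) \twoheadrightarrow \overline{\pi(G)}^{wot}$ of semi-topological semi-groups. I would then re-run the induction on $k$-dimension of Theorem~\ref{thm : decomposition theorem} with Eberlin (propagated through algebraic quotients by Lemma~\ref{lem : eberlin qutient}) in place of sealed, producing a decomposition $w(G) = \bigsqcup_{s \in J(w(G))} G\,s$. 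Pushing this through $\tilde\pi$ and observing that the only idempotents of $\lambda_H(H) \cup \{\mathbf{0}\}$ are the identity $e$ and $\mathbf{0}$, I conclude $\overline{\pi(G)}^{wot} \subseteq \pi(G)\cdot e \cup \pi(G)\cdot \mathbf{0} = \pi(G) \cup \{\mathbf{0}\}$, exactly as in Proposition~\ref{prop : decomp implies sealed, 3 -> 2}.

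The hard part will be re-running the induction of Theorem~\ref{thm : decomposition theorem} without a compact-center assumption, since Theorem~\ref{thm : mautner}---the tool invoked at each inductive step---requires $Z(G)$ to be compact. My plan to navigate this is to split the inductive argument according to whether $s$ lies in $\overline{Z(G)}$: if $s \in \overline{Z(G)}$, then $s$ commutes with $G$ in $w(G)$ (as a limit of central elements), so centricity is free and $s$ lands in a group piece controlled by Ruppert's theorem applied to the abelian closure $\overline{Z(G)}$; if $s \notin \overline{Z(G)}$, then the non-triviality of $F_R(s)$ needed to feed the induction is supplied directly by Lemmas~\ref{lem : F_R(s) non trivial} and~\ref{lem : in kernal and moved}, both of which are valid without assuming $Z(G)$ is compact.
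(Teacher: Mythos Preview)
Your approach has a genuine gap, and the paper's proof is both much shorter and conceptually different.

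The paper's argument is three lines: assuming $G$ is not sealed, \cite[Theorem 8.1]{bader2023homomorphic} gives a normal $N$ with $Z(G/N)$ non-compact; Chou's theorem \cite[Theorem 4.6]{chou1982weakly} produces a function in $WAP(Z(G/N))\setminus E(Z(G/N))$; Cowling's restriction theorem \cite[Theorem 1]{cowling1979restrictions} lifts it to $WAP(G/N)\setminus E(G/N)$, contradicting Lemma~\ref{lem : eberlin qutient}. No decomposition theorem, no representation theory, no induction.

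Your plan, by contrast, tries to re-prove Theorem~\ref{thm : decomposition theorem} under the Eberlin hypothesis in place of sealed, and this is where it breaks. The induction in Theorem~\ref{thm : decomposition theorem} does not only use Theorem~\ref{thm : mautner}; it also invokes Lemma~\ref{lem : maximal subgroup} to conclude that $M\subset Gs$ for the relevant idempotent. That lemma is exactly where sealed enters: the map $g\mapsto gs$ is a continuous homomorphism, and one needs its image to be closed so that $\overline{Gs}=Gs=H(s)$. You offer no substitute for this step, and Eberlin does not obviously supply one.

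Your handling of the case $s\in\overline{Z(G)}$ is also not a proof. Ruppert's theorem gives $sG=Gs$ (centricity), not that $s$ lies in some $Gv$ with $v$ idempotent. Indeed, if your argument for this case worked as written, it would not use the Eberlin hypothesis at all, and combined with the other case you would have proved the decomposition $w(G)=\bigsqcup_{v\in J(w(G))} Gv$ for \emph{every} connected $k$-algebraic group. Feeding that into the machinery of Proposition~\ref{prop : decomp implies sealed, 3 -> 2} would then show every such group is sealed, which is false (e.g.\ $G=\mathbf{G}_a(k)$). So the case split cannot succeed in the form you describe: when $Z(G)$ is non-compact, elements of $\overline{Z(G)}\setminus Z(G)$ are precisely the obstruction to the decomposition, and this is what Chou's theorem is detecting on the function-algebra side.
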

\begin{proof}
    Assume that $G$ is not sealed, then by \cite[Theorem 8.1]{bader2023homomorphic} there exists a normal subgroup $N\le G$ such that the center of the quotient $Z(G/N)$ is not compact. By [Theorem 4.6]\cite{chou1982weakly} there exist $f\in WAP(Z(G/N))\setminus E(Z(G/N))$
    By \cite[Theorem 1]{cowling1979restrictions} we get a function  $\tilde{f}\in  WAP(G/N)\setminus E(G/N)$.
    This in light of Lemma~\ref{lem : eberlin qutient} is a contradiction to the assumption that $G$ is Eberlin.
\end{proof}

The other direction, that algebraic sealed groups are Eberlin, will follow from a decomposition theory concerning the weakly almost periodic compactification of a sealed algebraic group.

\begin{prop}
    \label{prop: sealed iff eberlin 2->4}
    Let $G$ be a connected $k$-algebraic group, then if $G$ is sealed it is Eberlin.
\end{prop}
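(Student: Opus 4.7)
My plan is to leverage the decomposition theorem (Theorem~\ref{thm : decomposition theorem}) to produce, for every idempotent $s \in J(w(G))$, a unitary representation $\pi_s$ of $G$ whose continuous extension to $w(G)$ acts faithfully on the component $Gs$ and is controlled on the other components. The matrix coefficients of the family $\{\pi_s\}$ will then separate the points of $w(G)$, whence $E(G)=WAP(G)$ will follow by the Stone--Weierstrass theorem.

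Since $G$ is compactification-centric by Theorem~\ref{thm : compactification-centric}, Proposition~\ref{prop:cor_of_compactification_centric}(3) implies that every $s \in J(w(G))$ is central in $w(G)$. I therefore define $\phi_s \colon G \to Gs$ by $\phi_s(g)=gs$; centrality of $s$ together with $s^2=s$ yields $(gs)(hs)=g(sh)s=g(hs)s=(gh)s$, so $\phi_s$ is a continuous homomorphism. By Lemma~\ref{lem : maximal subgroup} (using the sealed hypothesis) combined with Ellis's theorem on the automatic joint continuity of multiplication in a locally compact semitopological group, $Gs$ carries the structure of a locally compact topological group, so I can form $\pi_s \coloneqq \lambda_{Gs}\circ\phi_s$ on $L^2(Gs)$, where $\lambda_{Gs}$ denotes the left regular representation of $Gs$.

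I next analyze the extension $\tilde\pi_s \colon w(G) \to S_{\pi_s}$. Because matrix coefficients of the regular representation of a locally compact group lie in $C_0$, the enveloping semi-group $S_{\pi_s}$ equals $\lambda_{Gs}(Gs)\cup\{0\}$ when $Gs$ is non-compact, and equals $\lambda_{Gs}(Gs)$ when $Gs$ is compact; the latter occurs precisely when $s$ is the idempotent $j$ of the minimal ideal $M(w(G))=Gj$ (itself compact, being the minimal ideal of a compact semi-topological semi-group). For $x\in Gt$ and a net $g_n\to x$ in $G$, the net $g_n s$ lies in $Gs$ and converges in $w(G)$ to $xs$. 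If $ts=s$ (equivalently $t\le s$ in the partial order $u\le v\iff v=uv$) then $xs\in Gs$ and $\tilde\pi_s(x)=\lambda_{Gs}(xs)$; otherwise $xs\notin Gs$, and Hausdorffness of $w(G)$ combined with local compactness of $Gs$ forces $g_n s$ to escape every compact subset of $Gs$, so $\tilde\pi_s(x)=0$.

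To separate distinct $x\in Gs$ and $y\in Gt$ in $w(G)$, the case $s=t$ is handled by faithfulness of $\lambda_{Gs}$; when $s\ne t$, a short case analysis on the idempotent lattice shows that some $\pi_r$ with $r\in\{s,t\}$ separates them, invoking the zero-trick from the previous paragraph whenever $r$ is not the maximum idempotent. The matrix coefficients of $\{\pi_s\}_{s\in J(w(G))}$ thus form a self-adjoint subalgebra of $C(w(G))=WAP(G)$ containing the constants and separating points, and Stone--Weierstrass gives $E(G)=WAP(G)$, so $G$ is Eberlin. The main subtlety I anticipate lies in the case around the maximum idempotent $j$: since $Gj$ is compact and $\tilde\pi_j$ admits no zero element, one must instead use some $\pi_t$ with $t\ne j$ to separate elements of $Gj$ from elements of other components, relying on $Gt$ being non-compact whenever $t\ne j$.
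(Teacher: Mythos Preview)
Your argument is correct and takes a genuinely different route from the paper. Both proofs invoke the decomposition Theorem~\ref{thm : decomposition theorem} and reduce via Stone--Weierstrass to showing that $E(G)$ separates the points of $w(G)$; but the paper proceeds by \emph{induction on $\dim_k G$}. For each idempotent $u$ it appeals to Theorem~\ref{thm : mautner} to extract a nontrivial normal $k$-algebraic subgroup $N\le F_R(u)$, applies the inductive hypothesis (together with results of Chou and of Liukkonen--Mosak on open subgroups) to deduce that $G/F_R(u)$ is Eberlin, and then observes that the retract $g\mapsto f(gu)$ lies in $E(G)$ for every $f\in WAP(G)$; comparing two such retracts forces $u_1=u_2$ and then $s_1=s_2$. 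Your approach dispenses with the induction entirely: since the enveloping semigroup of the regular representation of a locally compact group is its one-point compactification, the extension $\tilde\pi_s$ is completely explicit, and the idempotent-lattice case analysis---together with your observation that $Gs$ is non-compact unless $s$ is the idempotent of the minimal ideal, which is indeed correct because $Gs$ compact forces $w(G)s=\overline{Gs}=Gs$ and hence $ts=s$ for every idempotent $t$---suffices to separate any two points using only the family $\{\pi_s\}_{s\in J(w(G))}$. This is more elementary and self-contained: it needs neither the Mautner phenomenon nor the external restriction theorems, and no inductive machinery beyond what is already packaged into the decomposition theorem and Lemma~\ref{lem : maximal subgroup}. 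The paper's route, by contrast, yields along the way the stronger intermediate statement that every quotient $G/F_R(u)$ is itself Eberlin, and its retract technique may transfer more readily to settings where one lacks such precise control over $S_{\pi_s}$.
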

\begin{proof}
    Let $v\in J(w(G))$ be an idempotent.
    Denote by ${\iota}_v$ the retracting map 
    $$
    {\iota}_v\colon C(w(G)) \to C(w(G)v)
    $$
    defined by 
    $
    {\iota}_v(f) (s) = f(sv)
    $.
    
    Note that the map $g \mapsto gv$ is a continuous group homomorphism from $G$ onto $Gv$, and its kernel is $F(v)$.
    Since $G$ is sealed we get that $ G/F(v)\cong Gv$.
    We thus can identify $w(G/F(v))$ with $w(G)v$ and get that $WAP(G/F(v))\cong  C(w(G)v)$, making $\iota_v$ a map form $WAP(G)$ and onto $WAP(G/F(v))$.
    %and we denote by $\rho\colon G\to G/F(v)$ the quotient map.
    %We can therefore identify $w(G/F(v))$ with $w(G)v$.

    We prove by induction on the $k$-dimension of $G$ that if $G$ is sealed it is Eberlin.
    We will show that $E(G)\subset WAP(G)=C(w(G))$ separate points in $w(G)$, and conclude by the Stone–Weierstrass theorem that $E(G)=WAP(G)$.
    
    Let $s_1,s_2\in w(G)$ such that $f(s_1)=f(s_2)$ for any function $f\in E(G)$, we will show that $s_1=s_2$.
    By the decomposition Theorem~\ref{thm : decomposition theorem}, we may write $s_i=g_iu_i$ with $g_i\in G$ and $u_i\in J(w(G))$.
    
    By Theorem~\ref{thm : mautner} $F(u_i)\coloneqq F_R(u_i)$ contains a non-trivial normal $k$-algebraic subgroup $N_i=\bfN_i(k)$.
    By the induction hypotheses  $\bfG/\bfN_i(k)$ is Eberlin, and since $G/N_i$ is an open subgroup in  $\bfG/\bfN_i(k)$ \cite[Theorem 6.14]{platonov1993algebraic} we get by \cite[Theorem 1.1]{liukkonen1975symmetry} that $G/N_i$ is Eberlin.
    Finally as $N_i\subset F(u_i)$ we get by Lemma~\ref{lem : eberlin qutient} that $G/F(u_i)$ is an Eberlin group.
    
    Let $\rho_i\colon G\to G/F(u_i)$ be the quotient map, and $\rho_i^t\colon C(G/F(u_i)) \to C(G)$ be the transposed mapping. By \cite[Theorem 1]{chou1979uniform}:
    $$
    \rho^t(WAP(G/F(u_i))=\rho^t(E(G/F(u_i))= \rho^t(C(G/F(u_i))) \cap E(G)
    $$
    We saw that for any idempotent $u_i\in J(w(G))$
    that $\iota_{u_i}$ maps $WAP(G)$ onto $WAP(G/F(u_i))$.
    Thus given $\mathtt{f}\in WAP(G)$ the map $\mathtt{f}_i\colon g\mapsto gu_i$ is in $E(G)$.
    Hence by assumption
    $$
    \mathtt{f}(g_1u_1)=\mathtt{f}(g_2u_1u_2)
    $$
    and
    $$
    \mathtt{f}(g_1u_1u_2)=\mathtt{f}(g_2u_2)
    $$
    And since $WAP(G)$ separate point in $w(G)$ we get that $g_1u_1=g_2u_1u_2$ implying that $u_1=u_1u_2$. And that $g_2u_1u_2=g_2u_2$ implying that $u_1u_2=u_2$, concluding that $u_1=u_2$.
    
    Moreover, since 
   for any $f\in E(G)$. $$f(s_1u_1)=f(s_1)=f(s_2)=f(s_2u_2)=f(s_2u_1)$$  
   We get that  $\mathtt{f}(s_1)=\mathtt{f}(s_2)$ for any $\mathtt{f}\in WAP(G)$ making $s_1=s_2$ as needed.
\end{proof}
\subsection{Mixing properties of compactification-centric groups}
%Corollary 8.2. in bader gelander decomposes into direct sum od banch - not knowing these are hilbert...

\begin{prop}
    \label{prop : mixing prop of compactification-centric group}
    Let $G$ be a connected $k$-algebraic group. Let $\pi\colon G\to \mathcal{U}(\mathcal{H})$ be a unitary representation, and $S_\pi$ the associated compactification. Then
    each idempotent $v\in J(S_\pi)$ is a projection onto an invariant subspace $\mathcal{H}_v$.
\end{prop}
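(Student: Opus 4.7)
The plan is to prove the two assertions separately: first that $v$ is an orthogonal projection, and second that its range $\mathcal{H}_v := v\mathcal{H}$ is $G$-invariant. The first part is purely functional-analytic, while the second is where the compactification-centric property of $G$ (established in Theorem~\ref{thm : compactification-centric}) does the real work.

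For the first part, I observe that $S_\pi = \overline{\pi(G)}^{wot}$ is the WOT-closure of a set of unitary operators, hence it sits inside the closed unit ball of $B(\mathcal{H})$. In particular $\|v\| \le 1$. A standard result in Hilbert space theory states that any contractive idempotent is an orthogonal projection: from $v^2 = v$ and $\|v\| \le 1$ one shows $\ker(v) \perp \mathrm{Range}(v)$, which together with $v^2 = v$ forces $v = v^*$. I would invoke this classical fact rather than reprove it. Consequently $v$ is the orthogonal projection onto $\mathcal{H}_v = v\mathcal{H}$.

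For the second part, let $g \in G$ and $x \in \mathcal{H}_v$, so that $vx = x$. By Theorem~\ref{thm : compactification-centric}, $G = \bfG(k)$ is compactification-centric; applying the defining identity $\pi(g)v \in v\pi(G)$ inside the compactification $S_\pi$, there exists $g' \in G$ with $\pi(g)v = v\pi(g')$. Therefore
\[
\pi(g)x = \pi(g)vx = v\pi(g')x \in v\mathcal{H} = \mathcal{H}_v,
\]
which establishes that $\mathcal{H}_v$ is invariant under $\pi(G)$. Since the argument uses only that $g \in G$, invariance under the full representation follows.

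I do not anticipate a genuine obstacle here; the statement is really a clean corollary of compactification-centricity, and the only subtlety is ensuring one applies the correct side of the identity $sG = Gs$ (namely writing $\pi(g)v$ as $v\pi(g')$ rather than the reverse) so that the vector $x$ on which $v$ acts trivially ends up on the right. The only external input beyond what is in the excerpt is the standard Hilbert space lemma that a contractive idempotent is a self-adjoint projection, which I would cite rather than prove.
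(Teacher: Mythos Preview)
Your proof is correct, and the invariance argument is essentially the same as the paper's (both rely on the compactification-centric property; the paper invokes the slightly sharper consequence from Proposition~\ref{prop:cor_of_compactification_centric} that idempotents are actually central, i.e.\ $\pi(g)v=v\pi(g)$, but your weaker form $\pi(g)v=v\pi(g')$ is enough for invariance of the range).

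The self-adjointness argument, however, is genuinely different. The paper does not appeal to the ``contractive idempotent $\Rightarrow$ orthogonal projection'' lemma; instead it argues inside the semi-group: since $S_\pi v$ is a group, Ellis--Lawson gives joint continuity of multiplication on $S_\pi\times S_\pi v$, and writing $v=\lim_n\pi(g_n)$ one computes
\[
v=\lim_n\bigl(\pi(g_n)^*\pi(g_n)\bigr)v=\Bigl(\lim_n\pi(g_n)^*\Bigr)\Bigl(\lim_n\pi(g_n)v\Bigr)=v^*v,
\]
whence $v=v^*$. Your route is more elementary and avoids the Ellis--Lawson machinery entirely, at the cost of importing a Hilbert-space lemma external to the semi-group framework. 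The paper's route stays within the compactification toolkit already developed and, as a by-product, identifies $v^*$ explicitly as the WOT limit of $\pi(g_n^{-1})$, though for the statement at hand your argument is cleaner.
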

\begin{proof}
     Let $v\in J(S_\pi)$, since $S_\pi v$ is a closed subgroup,  the Ellis-Lawson joint continuity theorem implies that the multiplication map is jointly continuous over $S_\pi \times S_\pi v$. So by setting $v=\lim_n \pi(g_n)$ and observing that  $v^*=\lim_n \pi(g_n^{-1})=\lim \pi(g_n)^*$, we get:
    \[
    v=\lim_n (\pi(g_n)^*\pi(g_n)) v = \lim_n (\pi(g_n)^*\cdot \lim_n \pi(g_n)v = v^*v
    \]
    Hence, $v$ is self-adjoint, and since idempotents are central (\ref{prop:cor_of_compactification_centric}) $v$ is an orthogonal projection onto an invariant subspace.
\end{proof}
In particular, notice that for any unitary representation $(\pi,\mathcal{H})$, and $w\le v\in J(S_\pi)$ we get that 
$$
\mathcal{H}_w=\mathcal{H}_{vw}=v(w\mathcal{H})\le \mathcal{H}_v
$$
Assuming that $G$ is sealed and using the decomposition theorem we obtain a structure theorem for unitary representations of sealed groups.

\begin{prop}
    \label{prop : mixing prop of compactification-centric sealed group 2->3}
    Let $G$ be a connected $k$-algebraic group. Assume that $G$ is sealed, then any unitary representation decomposes as a direct sum of finite-dimensional and mixing (module the kernel).
\end{prop}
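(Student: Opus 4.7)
The plan is to analyze the enveloping semi-group $S_\pi := \overline{\pi(G)}^{wot}$, a semi-topological semi-group compactification of $G$, and to split $\mathcal{H}$ according to its idempotents $J(S_\pi)$, combining the decomposition theorem (Theorem~\ref{thm : decomposition theorem}), the Mautner phenomenon (Theorem~\ref{thm : mautner}), and the projection description of Proposition~\ref{prop : mixing prop of compactification-centric group}.

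First I would locate the almost-periodic summand. Let $M = M(S_\pi)$ be the minimal ideal of $S_\pi$, a compact topological group with identity $e_0$. One checks that $e_0$ is the minimum of $J(S_\pi)$: for any $v \in J(S_\pi)$ the product $e_0 v$ lies in $M$ and, by commutativity of idempotents (Proposition~\ref{prop:cor_of_compactification_centric}(3)), satisfies $(e_0 v)^2 = e_0 v$, so it is an idempotent in the group $M$ and must equal $e_0$. By Proposition~\ref{prop : mixing prop of compactification-centric group}, $e_0$ is the orthogonal projection onto a $G$-invariant subspace $\mathcal{H}_f$, and $\pi|_{\mathcal{H}_f}$ factors through the continuous map $g \mapsto \pi(g) e_0 \in M$; since $M$ is compact, Peter--Weyl decomposes $\pi|_{\mathcal{H}_f}$ as a direct sum of finite-dimensional irreducibles, accounting for the finite-dimensional summands.

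For the mixing part, set $\mathcal{H}_m := \mathcal{H}_f^\perp$ and apply Zorn's lemma to the collection of orthogonal families $(W_\alpha)$ of $G$-invariant subspaces of $\mathcal{H}_m$ such that each $\pi|_{W_\alpha}$ is mixing modulo its kernel. Let $(W_\alpha)$ be a maximal such family; the aim is $\bigoplus_\alpha W_\alpha = \mathcal{H}_m$. If not, the residue $W$ is a non-zero $G$-invariant subspace of $\mathcal{H}_m$ containing no finite-dimensional invariant subspace (by maximality of $\mathcal{H}_f$). If the restriction $S_\pi|_W$ equals $\pi(G)|_W \cup \{0\}$, then $\pi|_W$ is itself mixing modulo its kernel and can be adjoined to $(W_\alpha)$, contradicting maximality; otherwise there exists a non-trivial, non-group-element of $S_\pi|_W$ that lifts to an idempotent $v \in J(S_\pi)$ with $\mathcal{H}_v \supsetneq \mathcal{H}_f$ and $W \cap \mathcal{H}_v \ne 0$.

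For such $v$, lifting further to $\tilde v \in w(G) \setminus G$ via the universal property, Theorem~\ref{thm : mautner} produces a non-trivial Zariski closed normal subgroup $N = \bfN(k) \subseteq F_R(\tilde v) \subseteq F_R(v) = \ker(\pi|_{\mathcal{H}_v})$, so $\pi|_{W \cap \mathcal{H}_v}$ factors through the connected $k$-algebraic group $G/N$ of strictly smaller $k$-dimension. Induction on $\dim_k \bfG$ then decomposes $\pi|_{W \cap \mathcal{H}_v}$ as finite-dimensional plus mixing; since $W$ has no finite-dimensional invariant subspaces, the mixing summand is non-trivial and provides a new invariant subspace on which $\pi$ is mixing modulo kernel, contradicting maximality. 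The hard part will be closing the induction: one must verify that sealedness, or at least the decomposition property, descends from $\bfG$ to the quotient $\bfG/\bfN$. Since $N \subseteq F_R(v)$ consists of elements acting trivially on the invariant subspace $\mathcal{H}_v$, one expects $\bfN$ to sit inside $\Rad{u}{\bfG}$, so that the characterization of sealedness via compact center of $G/\Rad{u}{G}$ and absence of fixed Lie vectors descends directly; this preservation is the delicate point that will need to be justified carefully.
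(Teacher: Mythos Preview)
Your overall strategy is workable but takes a more circuitous route than the paper. The paper's proof avoids both Zorn's lemma and a fresh induction on $\dim_k \bfG$: it invokes the decomposition theorem (Theorem~\ref{thm : decomposition theorem}) once to obtain $S_\pi = \bigsqcup_{v \in J(S_\pi)} \pi(G)v$, and then for each idempotent $v$ defines $\widetilde{\mathcal{H}_v}$ as the orthogonal complement in $\mathcal{H}_v$ of all $\mathcal{H}_w$ with $w < v$. A short computation with the idempotent lattice shows that the enveloping semi-group of $\pi|_{\widetilde{\mathcal{H}_v}}$ is contained in $v\pi(G) \cup \{0\}$, so each piece is either compact-factoring (hence a direct sum of finite-dimensional irreducibles, by Peter--Weyl and sealedness) or mixing modulo its kernel; orthogonality of the $\widetilde{\mathcal{H}_v}$ follows from centrality of idempotents. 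Your Zorn-plus-induction argument essentially reproves the inductive content already packaged inside the decomposition theorem, and your step ``lifts to an idempotent $v \in J(S_\pi)$'' tacitly needs that theorem anyway (idempotents do not lift along semigroup surjections in general; you need $s = \pi(g)v$ to extract $v$).

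Your stated ``hard part'' --- that sealedness descends to the quotient --- is actually immediate: if $\varphi \colon G/N \to H$ had non-closed image, then composing with $G \to G/N$ would violate sealedness of $G$. The speculation that $\bfN \subseteq \Rad{u}{\bfG}$ is neither needed nor generally true; Theorem~\ref{thm : mautner} only produces a Zariski-closed subgroup inside $F_R(\tilde v)$, and one obtains a normal such subgroup by taking its normal closure inside the normal group $F_R(\tilde v)$ (Proposition~\ref{prop:cor_of_compactification_centric}(2)). The genuine technicality you do not address is that the abstract quotient $G/N$ need not equal $(\bfG/\bfN)(k)$ but only sits inside it as an open subgroup of finite index, so your inductive hypothesis --- formulated for $k$-points of connected $k$-algebraic groups --- does not apply directly to $G/N$; one must either induce the representation up to $(\bfG/\bfN)(k)$ or argue as in Lemma~\ref{lem : decomposition of G iplies of co finite subgruop}. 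The paper sidesteps this entirely by handling the finite-index passage once, inside the proof of the decomposition theorem, rather than here.
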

\begin{proof}
Let  $\pi\colon G\to \mathcal{U}(\mathcal{H})$ be a unitary representation. 
From the decomposition Theorem~\ref{thm : decomposition theorem} and by the universal property of the weakly almost periodic compactification, we get that
$$
     S_\pi = \disjointunion_{s\in J(S_\pi)} \pi(G) s
$$
Let $w\le v \in J(S_\pi)$, and denote by $\widetilde{\mathcal{H}_v}$ the orthogonal compliment of all $\mathcal{H}_w$ in $\mathcal{H}_v$, and by $\pi_v$ the restriction of the representation to $\widetilde{\mathcal{H}_v}$.

Denoting $G_v=G/\ker(\pi_v)$, we claim that either $G_v$ is compact or $\pi_v$ is a mixing representation of $G_v$.

By construction of $\pi_v$ and the decomposition property.
$$
S_{\pi_v}=\prod_{w<v}(Id_H-w) v S_\pi =
\disjointunion_{s\in J(S_\pi)} \prod_{w<v}(Id_H-w) vs   \cdot \pi(G) 
$$
For any $w < v$ we get that $(Id_H-w) vw =0$, also note that for any idempotent $s$. $vs\le v$ making: 
$$
S_{\pi_v}\subset v \cdot \pi(G) \cup \emph{0}
$$
If $\emph{0}\notin S_{\pi_v}$ then $v \cdot \pi(G)$ is a group and hence so is $S_{\pi_v}$.
And since $G$ is sealed $G_v$ is homeomorphic to a closed subgroup and thus compact.
So we may assume $\emph{0}\in S_{\pi_v}$, in fact it is the only element in $S_{\pi_v}\setminus \pi_v(G)$, making $\pi_v$ a mixing representation.

Now we claim that we have a direct sum of orthogonal representations.
    $$
    \pi = \oplus_{v\in J(S_\pi)} \pi_v
    $$
     First, $H_\pi$ is  equal to the sum of $\tilde{H}_v$, $v\in J(S_\pi)$.
    Since for the identity group element $e\in J(S_\pi)$ $H_\pi= H_e$.
    Moreover, the subspace $\widetilde{H}_v$ is $G$ invariant as it is an intersection of $G$ invariant sub-spaces.

    It is left to show orthogonality. Let \( v_1 \neq v_2 \) be elements in \( J(S_\pi) \), and let \( \zeta_i \) be vectors in \( \widetilde{H}_{v_i} \). 
    If \( v_1 < v_2 \), then by the definition of \( \widetilde{H}_{v_2} \),
\[
\langle \zeta_1, \zeta_2 \rangle = 0.
\]
Moreover, regardless of the relationship between \( v_1 \) and \( v_2 \), we have \( v_1v_2 < v_1 \), which again implies 
\[
\langle \zeta_1, v_1v_2\zeta_2 \rangle = 0.
\]
This leads us to conclude that
\[
\langle \zeta_1, \zeta_2 \rangle = \langle v_1\zeta_1, v_2\zeta_2 \rangle = \langle v_1v_1\zeta_1, v_1v_2\zeta_2 \rangle = \langle \zeta_1, v_1v_2\zeta_2 \rangle = 0
\]
    As needed.
\end{proof}
We finally conclude by proving the main theorem.

\begin{proofof}{Theorem~\ref{thm: main theorem}}
    The equivalence of (1) and (2), is the content of \cite[Theorem 8.1]{bader2023homomorphic}. The equivalence of (2) and (4) follows from Propositions \ref{prop: sealed iff eberlin 2->4} and \ref{prop : Enerlin ins sealed 4->2}.
    Finally by Propositions \ref{prop : decomp implies sealed, 3 -> 2} and \ref{prop : mixing prop of compactification-centric sealed group 2->3} we get that (2) is equivalent to (3).
\end{proofof}
%\newenvironment{proofof}[1]{\noindent {\em Proof of #1.  }}{\hfill$\Box$}

%Bibliography
\bibliographystyle{alpha}
\bibliography{refs.bib}
\end{document}